\newtheorem{theorem}{Theorem}
\newtheorem{proposition}{Proposition}[section]
\newtheorem{lemma}[proposition]{Lemma}
\newtheorem{corollary}[proposition]{Corollary}
\theoremstyle{definition}
\newtheorem{definition}[proposition]{Definition}
\newtheorem{remark}[proposition]{Remark}
\newtheorem{example}[proposition]{Example}
\newcommand\dashmapsto{\mapstochar\dashrightarrow}
\newcommand{\p}{\mathbb{P}}
\newcommand{\Z}{\mathbb{Z}}
\renewcommand{\k}{\mathbf{k}}
\newcommand{\Aut}{\mathrm{Aut}}
\newcommand{\Taut}{\mathrm{TAut}}
\newcommand{\PGL}{\mathrm{PGL}}
\newcommand{\GL}{\mathrm{GL}}
\newcommand{\Sym}{\mathrm{Sym}}
\newcommand{\A}{\mathbb{A}}
\newcommand{\Jac}{\mathrm{Jac}}
\newcommand{\Aff}{\mathrm{Aff}}
\newcommand{\C}{\mathbb{C}}
\newcommand{\kk}{\overline{\k}}
\newcommand{\Pn}{\p^n_\k}
\newcommand{\car}{\mathrm{char}}
\newcommand{\longto}{\longrightarrow}
\renewcommand{\epsilon}{\varepsilon}
\DeclareMathOperator{\Bir}{Bir}
\newcommand{\Pic}{\mathrm{Pic}}
\title[Standard and linear Cremona transformations]{The group of Cremona transformations generated by linear maps and the standard involution}
\author{J\'er\'emy Blanc}
\author{Isac Hed\'en}
\address{J\'er\'emy Blanc\\
 Mathematisches Institut\\
Universit\"at Basel\\
 Spiegelgasse 1\\
  4051 Basel, Switzerland}
\email{Jeremy.Blanc@unibas.ch}
\address{Isac Hed\'en\\
 Mathematisches Institut\\
Universit\"at Basel\\
 Spiegelgasse 1\\
  4051 Basel, Switzerland}
\email{Isac.Heden@unibas.ch}
\thanks{The authors acknowledge support by the Swiss National Science Foundation Grant  "Birational Geometry" PP00P2\_128422 /1.}
\date{\today}
\begin{document}
\begin{abstract}{This article studies the group generated by automorphisms of the projective space of dimension $n$ and by the standard birational involution of degree $n$. Every element of this group only contracts rational hypersurfaces, but in odd dimension, there are simple elements having this property which do not belong to the group. Geometric properties of the elements of the group are given, as well as a description of its intersection with monomial transformations.}\end{abstract}

\subjclass[2010]{14E07}\maketitle

\section{Introduction}
Let us fix a ground field $\k$. We will assume it to be arbitrary whenever we do not state otherwise. The Cremona group $\mathrm{Cr}_n(\k)=\Bir(\Pn)$ is the group of birational transformations of the space of dimension $n$. This group contains $\Aut(\Pn)=\PGL(n+1,\k)$ and the birational map given by
$$\sigma_n\colon [x_0:\dots:x_n]\dashmapsto [\frac{1}{x_0}:\dots:\frac{1}{x_n}]=[x_1x_2\dots x_n:x_0x_2x_3\dots x_n:\dots:x_0x_1\dots x_{n-1}].$$
For $n=3$, this map is called \emph{tetrahedral transformation} and written $T_{\mathrm{tet}}$ in \cite[page 301,\S14]{Hudson} or standard \emph{cubo-cubic transformation of space} \cite[page 179]{SempRoth} or \emph{$(3,3)$-Transformation} \cite[pages 2071-2072, 2108]{Enc}, as its degree and the degree of its inverse are three (but of course this map is not the only one having these properties).
Nowadays, the usual terminology is to call $\sigma_n$, in any dimension $n$, \emph{the standard Cremona transformation} (see for instance
\cite{Gizatullin}, \cite{GP}, \cite[Page~72]{Marcolli}, \cite{Dolga},�\cite{Deserti}). The map $\sigma_n$ restricts to an automorphism of the standard torus $T\subset \Pn$ and contracts the $n+1$ coordinate hyperplanes, i.e. the complement of the torus. 

\bigskip

The group $\Bir(\p^1_\k)$ is equal to $\Aut(\p^1_\k)$, so we will always assume $n\ge 2$ in the sequel. The classical Noether-Castelnuovo theorem \cite{Cas} (see also \cite[Chapter V, \S5, Theorem 2, p. 100]{Sha}) asserts that $\Bir(\p^2_\k)$ is generated by $\Aut(\p^2_\k)$ and $\sigma_2$ when $\k$ is algebraically closed. This is known to be false when $n\ge 3$ or when $\k$ is not algebraically closed. The reason for these two cases is in fact similar: it follows from the description of $\sigma_n$ that every element of the classical group 
$$G_n(\k)=\langle \sigma_n, \Aut(\Pn)\rangle \subset \Bir(\Pn)$$
(whose elements are called \emph{regular} by A.~Coble \cite[page 359, \S4]{CobleII}  and \emph{punctual} by H.~Hudson and P.~Du Val \cite[page 318, \S29]{Hudson},  \cite{DuVal60} and \cite{DuVal81}, see Section~\ref{Sec:Punctual} for a discussion of the terminology) 
contracts only rational hypersurfaces (recall that a irreducible hypersurface is said to be contracted if its image has codimension~$\ge 2$). Moreover, if $n\ge 3$ or if $\k$ is not algebraically closed, there are elements of $\Bir(\Pn)$ which contract non-rational hypersurfaces  (see Section~\ref{Sec:NotRational}). Hence $G_n(\k)$  is a proper subgroup of $\Bir(\Pn)$ in general. Its elements have been studied in detail in many texts. See in particular \cite{Kantor}, \cite{CobleII}, \cite{Hudson}, \cite{DuVal33}, \cite{DuVal60}, \cite{DuVal81}, \cite{DolOrt}, \cite{Gizatullin}, \cite{Deserti}.

\bigskip

To our knowledge, until now, there has been no other way of showing that elements of $\Bir(\Pn)$ do not belong to $G_n(\k)$ than to look at non-rational hypersurfaces that are contracted. The natural question that arises is then whether the above reason is the only one which prevents elements from being in $G_n(\k)$, i.e.: does $G_n(\k)$ contain every element of $\Bir(\Pn)$ that contracts only rational hypersurfaces?

The answer to this question is positive for $n=2$ and any field $\k$; this follows from an adaptation of the proof of the Noether-Castelnuovo theorem (see Proposition~\ref{Prop:G2Krational} below). However, we show in this text that the answer is negative for any odd integer $n\ge 3$ and any field $\k$. For instance we prove that for $n\geq 2$, the birational monomial map 
$$\xi_n \colon [x_0:\dots:x_n]\dashmapsto [x_0:x_1 :x_2\frac{x_1}{x_0}:x_3:\dots:x_n]$$
does not belong to $G_n(\k)$ if $n$ odd, but it does if $n$ is even and $\car(\k)\not=2$. More generally, we give a way of deciding whether a monomial map belongs to $G_n(\k)$ or not, at least when $\car(\k)\not=2$. Recall that the group of monomial transformations of $\Pn$ is naturally isomorphic to $(\k^*)^n\rtimes\GL(n,\Z)$. The element
$$\left((\alpha_1,\dots,\alpha_n),\left(\begin{array}{llll}
a_{11}& \dots & a_{1n}\\
\vdots & \ddots & \vdots \\
a_{n1} & \dots & a_{nn}\end{array}\right)\right)\in (\k^*)^n\rtimes\GL(n,\Z)$$
corresponds to the birational map
$$ [x_0:\dots:x_n]\dashmapsto [1:\alpha_1 (\frac{x_1}{x_0})^{a_{11}}\cdots (\frac{x_n}{x_0})^{a_{1n}}:\dots:\alpha_n(\frac{x_1}{x_0})^{a_{n1}}\cdots (\frac{x_n}{x_0})^{a_{nn}}].$$
With this natural isomorphism, we obtain the following result.
\begin{theorem}\label{Thm:Monomial}
Let $\k$ be any field and $n\ge 2$.\\
$(1)$ If $n$ is even and $\car(\k)\not=2$, every monomial transformation of $\Pn$ belongs to $G_n(\k)$. \\
$(2)$ If $n$ is odd, there are monomial transformations of $\Pn$ which do not belong to $G_n(\k)$.\\
$(3)$ If $n$ is odd and $\car(\k)\not=2$, then the group of monomial transformations that belong to $G_n(\k)$ is equal to $$(\k^*)^n\rtimes\GL(n,\Z)_{\mathrm{odd}},$$ where $\GL(n,\Z)_{\mathrm{odd}}$ is the subgroup of $\GL(n,\Z)$ consisting of matrices such that each column has an odd number of odd entries $($or an odd sum of entries$)$.
\end{theorem}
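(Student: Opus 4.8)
\emph{Setup and reduction.} Under the isomorphism with $(\k^*)^n\rtimes\GL(n,\Z)$, the torus $(\k^*)^n$ and the group $\Sym_{n+1}$ of coordinate permutations lie in $\Aut(\Pn)$, while $\sigma_n$ corresponds to $(\mathbf{0},-I)$; here the transposition exchanging $x_0$ and $x_1$ is the matrix with first column $(-1,\dots,-1)$ and identity block elsewhere. Writing $c_j=\sum_i a_{ij}$ for the $j$-th column sum of $A=(a_{ij})$, one has $A\in\GL(n,\Z)_{\mathrm{odd}}$ iff $\mathbf{1}A\equiv\mathbf{1}\pmod 2$, so $\GL(n,\Z)_{\mathrm{odd}}$ is the stabiliser of $\mathbf{1}\in(\Z/2)^n$ under $A\mapsto\mathbf{1}A$, hence a subgroup. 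A direct check shows $-I$ and every permutation lie in it, whereas the above transposition lies in it iff $n$ is odd. It therefore suffices to prove two inclusions: an \emph{obstruction}, that a monomial map in $G_n(\k)$ has $A\in\GL(n,\Z)_{\mathrm{odd}}$ when $n$ is odd; and a \emph{realisation}, that conversely $(\k^*)^n\rtimes\GL(n,\Z)_{\mathrm{odd}}\subseteq G_n(\k)$ for $n$ odd and $(\k^*)^n\rtimes\GL(n,\Z)\subseteq G_n(\k)$ for $n$ even (assuming $\car(\k)\neq2$).

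\emph{The obstruction.} In the affine chart $y_i=x_i/x_0$ a monomial map $((\alpha_i),A)$ is $y\mapsto(\alpha_i\prod_j y_j^{a_{ij}})_i$, whose Jacobian, computed by logarithmic differentiation, is $\Jac=\det(A)\,(\prod_i\alpha_i)\prod_j y_j^{c_j-1}$. I will track $\Jac$ as an element of the group $V=\k(y_1,\dots,y_n)^*/(\k(y)^{*2}\cdot\k^*)$. For a monomial map its class is $\prod_{j:\,c_j\text{ even}}y_j$, which is trivial in $V$ exactly when every $c_j$ is odd, that is, when $A\in\GL(n,\Z)_{\mathrm{odd}}$ (the $y_j$ are independent modulo squares, by their orders of vanishing). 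The chain rule $\Jac(fg)=(\Jac f\circ g)\cdot\Jac g$ makes $f\mapsto[\Jac f]$ a crossed homomorphism $G_n(\k)\to V$ for the action $g\cdot[\phi]=[\phi\circ g^{-1}]$; since this action fixes the trivial class, a crossed homomorphism vanishing on a symmetric generating set vanishes identically. Now $[\Jac\sigma_n]=[\prod_j y_j^{-2}]$ is trivial, and for $a\in\Aut(\Pn)$ one has $\Jac a=\mathrm{const}\cdot Q^{-(n+1)}$ with $Q$ a linear form (the pullback of $x_0$); when $n$ is odd $Q^{n+1}$ is a square, so $[\Jac a]$ is trivial as well. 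Hence $[\Jac f]$ is trivial for every $f\in G_n(\k)$, and a monomial $f\in G_n(\k)$ must have $A\in\GL(n,\Z)_{\mathrm{odd}}$. Since the computation of $\Jac\sigma_n$ is valid in every characteristic, the same argument exhibits explicit monomial maps outside $G_n(\k)$, for instance $\xi_n$, where $c_1=2$, proving part $(2)$ over an arbitrary field.

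\emph{The realisation.} For the converse inclusions I first reduce to transvections: conjugating a single elementary matrix $I+E_{21}$ by the permutations in $\Sym_n$ produces all $I+E_{ij}$, which generate $\mathrm{SL}(n,\Z)$, so together with a transposition $\langle\Sym_{n+1},-I,\,I+E_{21}\rangle=\GL(n,\Z)$; for $n$ odd a parallel but more delicate computation shows that $\Sym_{n+1}$, $-I$ and the parity-preserving transvections $I+2E_{ij}$ and $I+E_{ij}+E_{kj}$ generate the index-$(2^n-1)$ subgroup $\GL(n,\Z)_{\mathrm{odd}}$. It then remains to exhibit each needed transvection as a word in $\sigma_n$ and elements of $\Aut(\Pn)$. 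This is carried out by composing $\sigma_n$ with suitable non-monomial linear maps to produce de Jonqui\`eres-type quadratic (resp.\ cubic) maps and multiplying these so that the non-monomial factors cancel, leaving the desired monomial transvection; it is here that $\car(\k)\neq2$ enters. Combined with $(\k^*)^n,\Sym_{n+1}\subseteq\Aut(\Pn)$ and $\sigma_n\leftrightarrow-I$, this yields the stated inclusions and completes parts $(1)$ and $(3)$.

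\emph{Main obstacle.} The conceptual core, and the only subtle point of the obstruction, is to realise $f\mapsto[\Jac f]$ cleanly as a crossed homomorphism into $V$ and to exploit the parity of $n+1$; this step is short and characteristic-free. The real labour lies in the realisation: producing explicit words in $\sigma_n$ and $\Aut(\Pn)$ that evaluate to the generating transvections, together with the group-theoretic generation of $\GL(n,\Z)_{\mathrm{odd}}$ in the odd case. I expect these explicit factorisations to be the principal difficulty.
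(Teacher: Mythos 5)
Your obstruction argument is correct and complete, and it is essentially the ``affine Jacobian'' variant that the paper only sketches in a remark after Corollary~\ref{Coro:XYnot}: the paper's official route to the necessary condition is geometric, via the resolution of $\sigma_n$ (Proposition~\ref{resolution}) and the evenness of discrepancies (Proposition~\ref{Prop:Discrepancy}), while its algebraic Jacobian section is stated only in characteristic $0$. Your version --- tracking the class of the affine Jacobian in $\k(y_1,\dots,y_n)^*/(\k(y)^{*2}\cdot\k^*)$ and using the chain rule, the formula $\Jac(a)=c\cdot Q^{-(n+1)}$ for $a\in\Aut(\Pn)$, and the parity of $n+1$ --- is characteristic-free and yields part $(2)$ and the inclusion $G_n(\k)\cap\GL(n,\Z)\subseteq\GL(n,\Z)_{\mathrm{odd}}$ directly; this also lets you bypass the paper's maximality argument (Lemma~\ref{Lemm:GLNodd}(1) plus the single excluded element $\xi_n$), which is how the paper obtains that inclusion. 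This half of your proposal is sound.

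The realisation half, however, contains a genuine gap rather than just missing routine detail. You correctly identify that the needed generators include the transvections $I+E_{ij}+E_{kj}$ (the paper's $M_\nu$, realised by $\nu\colon x_2\mapsto x_2\frac{x_1}{x_0},\ x_3\mapsto x_3\frac{x_1}{x_0}$), and these cannot be avoided: modulo $2$, permutations, $M_\theta$ and $I+2E_{ij}$ all reduce to permutation matrices, which for $n\ge 4$ generate far less than the full stabiliser of $(1,\dots,1)$ in $\GL(n,\mathbb{F}_2)$. But your one-sentence strategy (``compose $\sigma_n$ with non-monomial linear maps so that the non-monomial factors cancel'') only evidently produces the easy generators such as $M_\mu=I+2E_{21}$, obtained from $\theta_n$-type conjugates as in Proposition~\ref{Prop:ContainsMonomials}(a). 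Producing $\nu$ is the technical heart of the paper: one must first show that $x_1\mapsto x_1+\frac{x_2x_3}{x_0}$ lies in $G_n(\k)$ via a commutator trick (Lemma~\ref{Lem:Tau}), then run the seven-step composition $\chi_0=\sigma_n\alpha\sigma_n\tau_2\sigma_n\tau_1\theta_n$ of Example~\ref{ExampleChi}, which passes a contracted hyperplane through exceptional divisors of linear subspaces of successive dimensions, and finally correct $\chi_0$ to a monomial map (Example~\ref{Examplechi2}). Nothing in your outline indicates how to reach such an element, and note that even part $(1)$ depends on it, since the paper's proof that $\xi_n\in G_n(\k)$ for $n$ even goes through $\nu$ and its conjugates. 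The group-theoretic claim that your generators generate $\GL(n,\Z)_{\mathrm{odd}}$ (the paper's Lemma~\ref{Lemm:GLNodd}(2), proved by a Euclidean-type descent on the last column) is likewise asserted without proof, though that part is more plausibly routine.
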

\begin{remark} The group $\GL(n,\Z)_{\mathrm{odd}}$ is a maximal subgroup of $\GL(n,\Z)$, and has index $2^n-1$ (Lemma~\ref{Lemm:GLNodd}).\end{remark}
 
The key point in the proof of part (2) of Theorem~\ref{Thm:Monomial} is to observe that in odd dimension, the discrepancy of hypersurfaces that are contracted by elements of $G_n(\k)$ is even (see Section~\ref{Sec:ResolutDiscrepancy}, and Proposition~\ref{Prop:Discrepancy} in particular). This result gives strong geometric properties of elements of $G_n(\k)$, which we describe after giving the following definition.
\begin{definition} 
Let $\varphi\in \Bir(\Pn)$ and let $H,\Gamma\subset \Pn$ be two closed irreducible subsets. Denote by $\pi\colon X\to \Pn$ the blow-up of $\Gamma$ and by $E=\pi^{-1}(\Gamma)$ the exceptional divisor.

We will say that \emph{$\varphi$ sends $H$ onto the exceptional divisor of $\Gamma$} if the restriction of $\pi^{-1}\varphi$ induces a birational map $H\dasharrow E$ $($this implies that $H$ is a hypersurface$)$.
 \end{definition}
 \begin{theorem}\label{Theorem:Contraction}Let $\k$ be any field and $n\ge 2$. Then the following hold.\\ 
 $(1)$ If $n$ is odd and $H\subset \Pn$ is a irreducible hypersurface which is sent by an element $g\in G_n(\k)$ onto the exceptional divisor of an irreducible closed subset $\Gamma\subset \Pn$, then $\Gamma$ has even dimension.\\ 
 $(2)$ If $n> m\ge 0$, $nm$ is even and $\car(\k)\not=2$, then there exists an irreducible closed linear subset $\Gamma\subset \Pn$ of dimension $m$ and an element $g\in G_n(\k)$ that sends a hyperplane onto the exceptional divisor of $\Gamma$. 
 \end{theorem}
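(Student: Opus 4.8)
The plan is to treat the two parts separately: part $(1)$ is a direct consequence of Proposition~\ref{Prop:Discrepancy}, while part $(2)$ is an explicit monomial construction fed into Theorem~\ref{Thm:Monomial}.

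For $(1)$, I would first translate the hypothesis into valuation-theoretic terms. Saying that $\pi^{-1}g$ induces a birational map $H\dashrightarrow E$ means exactly that the divisorial valuation $v_H$ attached to $H$ on $\k(\Pn)$ coincides, under the identification of function fields given by $g$, with the valuation $v_E$ of the exceptional divisor of the blow-up of $\Gamma$; in particular $g$ contracts $H$ to $\Gamma$. Since the discrepancy over $\Pn$ of a divisorial valuation depends only on the valuation, we have $a(v_H)=a(v_E)$. Now Proposition~\ref{Prop:Discrepancy} applies (as $n$ is odd): the hypersurface $H$ is contracted by $g\in G_n(\k)$, so $a(v_H)$ is even. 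On the other hand, because $\Gamma$ is irreducible it is smooth along its generic point, and the generic point of $E$ lies over it; over the smooth locus $\pi$ is the blow-up of a smooth subvariety of codimension $\mathrm{codim}\,\Gamma=n-\dim\Gamma$, so the coefficient of $E$ in $K_X-\pi^*K_{\Pn}$ is $\mathrm{codim}\,\Gamma-1$, i.e. $a(v_E)=n-\dim\Gamma-1$. Hence $n-\dim\Gamma-1$ is even, and since $n$ is odd this forces $\dim\Gamma$ to be even.

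For $(2)$ I would produce $g$ as a monomial map. Set $k=n-m$ (so $1\le k\le n$) and take $\Gamma=\{x_0=\dots=x_{k-1}=0\}\subset\Pn$, a linear subspace of dimension $m$. Working in the lattice $\Z^n$ of the standard torus, write $e_1,\dots,e_n$ for the standard basis and $u_i$ for the ray of $\{x_i=0\}$, so $u_i=e_i$ for $1\le i\le n$ and $u_0=-(e_1+\dots+e_n)$; the cone of $\Gamma$ is generated by $u_0,\dots,u_{k-1}$, and blowing up $\Gamma$ is the star subdivision at $w:=u_0+\dots+u_{k-1}=-(e_k+\dots+e_n)$, with $E=D_w$. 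Let $a\in\GL(n,\Z)$ be the matrix whose first $n-1$ columns are $e_1,\dots,e_{n-1}$ and whose last column is $w$; expanding along the last row shows $\det a=-1$, so indeed $a\in\GL(n,\Z)$. With the convention of Theorem~\ref{Thm:Monomial} the monomial map $g=\varphi_a$ acts on $N$ by $a$, so $a\cdot u_n=w$; thus $g$ sends the coordinate hyperplane $\{x_n=0\}$ onto $D_w=E$, and since $a\in\GL(n,\Z)$ the induced equivariant map on these toric divisors is birational. Hence $g$ sends a hyperplane onto the exceptional divisor of $\Gamma$.

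It remains to place $g$ in $G_n(\k)$, and here the parity hypothesis $nm$ even enters exactly as it must. If $n$ is even, Theorem~\ref{Thm:Monomial}$(1)$ gives $g\in G_n(\k)$ at once (using $\car(\k)\neq2$). If $n$ is odd, then $m$ is even, so $k=n-m$ is odd; the columns $e_1,\dots,e_{n-1}$ of $a$ have sum $1$, and the last column $w$ has sum $-(n-k+1)$, which is odd because $n$ and $k$ are odd. Thus every column of $a$ has odd sum, i.e. $a\in\GL(n,\Z)_{\mathrm{odd}}$, and Theorem~\ref{Thm:Monomial}$(3)$ gives $g\in G_n(\k)$. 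The main obstacle is the toric contraction dictionary used in the previous paragraph, namely verifying carefully that $\varphi_a$ contracts $\{x_n=0\}$ with center $\Gamma$ and that the induced map $\{x_n=0\}\dashrightarrow E$ is genuinely birational; this is where one must be precise about the identification of $g$ with its action on $N$. The rest is bookkeeping, and it is pleasant to note that the parity of the column sum of $w$ (governing membership in $\GL(n,\Z)_{\mathrm{odd}}$) agrees with the discrepancy parity of part $(1)$ precisely under the hypothesis $nm$ even, so that parts $(1)$ and $(2)$ fit together as a sharp dichotomy.
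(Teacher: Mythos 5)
Your proposal is correct, and both parts follow the paper's strategy; the only genuine divergence is in how part $(2)$ is implemented. For part $(1)$ your argument is exactly the paper's: the discrepancy of $H$ with respect to $g$ equals the discrepancy $n-\dim\Gamma-1$ of the exceptional divisor over $\Gamma$ (computed at the generic point of $\Gamma$, where it is smooth -- a point you justify more carefully than the paper, which records it only as a remark), and Proposition~\ref{Prop:Discrepancy} forces this to be even. For part $(2)$ the paper also exhibits an explicit monomial map and feeds it into Theorem~\ref{Thm:Monomial}, but it uses $[x_0:\dots:x_n]\dashmapsto[x_0:x_1:x_2\tfrac{x_1}{x_0}:\dots:x_{n-m}\tfrac{x_1}{x_0}:x_{n-m+1}:\dots:x_n]$, contracting $\{x_1=0\}$ onto $\{x_1=\dots=x_{n-m}=0\}$, and verifies the ``sent onto the exceptional divisor'' condition by writing the blow-up of $\Gamma$ explicitly inside $\Pn\times\p^{m-1}$ and composing (Lemma~\ref{Lem:SentOnAnyDimension}); you instead build the matrix from the fan of $\Pn$ and invoke the toric dictionary (star subdivision at $w=u_0+\dots+u_{k-1}$, image ray $a\cdot e_n=w$). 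Your toric verification is correct and arguably more conceptual -- it makes transparent why the column sum of $w$, hence membership in $\GL(n,\Z)_{\mathrm{odd}}$, matches the discrepancy parity -- at the cost of importing standard toric machinery where the paper stays self-contained with one explicit chart computation; the parity bookkeeping ($\det a=-1$, odd column sums when $n$ and $k=n-m$ are both odd) checks out, and your matrix is, up to coordinate permutations, carrying the same information as the paper's.
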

Part (1) of Theorem~\ref{Theorem:Contraction} gives the geometric explanation of the fact that $\xi_n\notin G_n(\k)$ if $n$ is odd (see Corollary~\ref{Coro:XinNotInGnIfnodd}). However, it follows from part (2) that there are elements of $G_4(\k)$ which send hypersurfaces onto the exceptional divisors of points, lines or planes. For this reason, it could a priori be possible that $G_4(\k)$ contains all elements of $\Bir(\p^4_\k)$ that contract only rational hypersurfaces. The same question remains open in any even dimension $n\ge 4$. It would also be interesting to know if the group generated by $G_n(\k)$ and $\xi_n$ contains all elements of $\Bir(\Pn)$ that contract only rational hypersurfaces, or to describe   $G_n(\k)\cap \GL(n,\mathbb{Z})$ for $n\ge 3$ and $\car(\k)=2$.
 
 \bigskip
 
 The article is organised as follows.

The result on discrepancies can also be viewed algebraically, using Jacobians. We do this in Section~\ref{Sec:Jacobian}, before giving the geometric description of discrepancies in Section~\ref{Sec:ResolutDiscrepancy}. 
 
 Section~\ref{Sec:ChangingDiscr} explains how one can change the discrepancies in higher dimension, and especially in even dimension. It contains the proofs of Theorems~\ref{Thm:Monomial} and \ref{Theorem:Contraction}.
 
Section~\ref{Sec:Linear} describes when the canonical injections $\Bir(\p^n_\k)\to \Bir(\p^{n+1}_\k)$ send $G_n(\k)$ into $G_{n+1}(\k)$. In Section~\ref{Sec:AutAn}, we show that many automorphisms of $\A^n_\k$ can be obtained as elements of $G_n(\k)$ if $\car(\k)\not=2$, namely all tame automorphisms and the Nagata automorphism. The case of characteristic zero is an easy observation, but the general case demands a little bit more work. 

 Section~\ref{Sec:NotRational} describes the relation between $G_n(\k)$ and rational hypersurfaces which are contracted. Section~\ref{Sec:Punctual}, finally, explains the difference between the classical definitions of punctual maps that appear in the literature.
 
 The authors thank Serge Cantat, Julie D\'eserti, Igor Dolgachev, St\'ephane Lamy, Ivan Pan and Thierry Vust for interesting discussions on the topic and their remarks and corrections on the article. We also express our gratitude to the anonymous referee for his careful reading and very helpful suggestions, remarks and historical references.
\section{The Jacobian}\label{Sec:Jacobian}
\begin{definition}\label{Defi:Jac}
$(a)$ Let $f_0,\dots,f_n\in \k(x_0,\dots,x_n)$ be rational functions. 
We define $$\Jac(f_0,\dots,f_n)=\det\left(\left(\frac{\partial f_i}{\partial x_j}\right)_{i,j=0}^n\right)\in \k(x_0,\dots,x_n).$$

$(b)$ If $\varphi\in \Bir(\Pn)$ is given by
$$[x_0:\dots:x_n]\dashmapsto [f_0(x_0,\dots,x_n):\dots:f_n(x_0,\dots,x_n)]$$
where the $f_i$ are homogeneous polynomials of degree $d$ without common factor, the Jacobian $\Jac(\varphi)$ of $\varphi$ is defined to be $\Jac(f_0,\dots,f_n)$. It is defined up to multiplication with the $(n+1)$-th power of an element of $\k^*$, and has degree $(n+1)(d-1)$ (or can be zero, if $\car(\k)>0$). 
\end{definition}

\begin{remark}
If $\car(\k)=0$, the Jacobian  $\Jac(\varphi)$ of $\varphi\in \Bir(\Pn)$ is a polynomial which determines the hypersurfaces of $\Pn$ where the map $\varphi$ is not locally an isomorphism (this is false in positive characteristic, when the Jacobian is zero).\end{remark}
\begin{lemma}\label{Lemm:Jachom}
Let $\k$ be a field of characteristic zero, let $h\in \k[x_0,\ldots,x_n]_d$ be a homogeneous polynomial of degree $d\in \mathbb{N}$ and let $f_0,\ldots,f_n\in \k(x_0,\ldots,x_n)_e$ be homogeneous rational functions of degree $e\in \mathbb{Z}\setminus\{0\}$. Then $$\Jac(hf_0,\ldots,hf_n)=\left(1+\tfrac de\right)\Jac(f_0,\ldots,f_n)h^{n+1}.$$ 
\end{lemma}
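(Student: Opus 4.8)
The plan is to differentiate the products $hf_i$ by the product rule and then exploit multilinearity of the determinant. Write $h_j=\partial h/\partial x_j$ and let $F=(\partial f_i/\partial x_j)_{i,j=0}^n$, so that $\Jac(f_0,\dots,f_n)=\det F$. Since $\partial(hf_i)/\partial x_j=f_ih_j+h\,\partial f_i/\partial x_j$, the $(i,j)$-entry of the Jacobian matrix of $(hf_0,\dots,hf_n)$ is $f_ih_j+hF_{ij}$. Hence its $i$-th row equals $f_i\mathbf{h}+hF_i$, where $\mathbf{h}=(h_0,\dots,h_n)$ is one fixed row vector and $F_i$ denotes the $i$-th row of $F$.

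First I would expand $\Jac(hf_0,\dots,hf_n)$ by multilinearity in the $n+1$ rows, choosing in each row either the summand $f_i\mathbf{h}$ or the summand $hF_i$. Every term that selects $f_i\mathbf{h}$ in at least two rows vanishes, because two of the resulting rows are then proportional to the common vector $\mathbf{h}$. So only two kinds of terms survive: the one choosing $hF_i$ in every row, contributing $h^{n+1}\det F$; and, for each $k$, the one choosing $f_k\mathbf{h}$ in row $k$ and $hF_i$ elsewhere, contributing $f_kh^nD_k$, where $D_k$ is the determinant of $F$ with its $k$-th row replaced by $\mathbf{h}$. This yields $\Jac(hf_0,\dots,hf_n)=h^{n+1}\Jac(f_0,\dots,f_n)+h^n\sum_k f_kD_k$.

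It then remains to identify $\sum_k f_kD_k$. Expanding $D_k$ along its $k$-th row gives $D_k=\sum_j h_jC_{kj}$ with $C_{kj}$ the $(k,j)$-cofactor of $F$, so $\sum_k f_kD_k=\sum_j h_j\bigl(\sum_k f_kC_{kj}\bigr)$. Here I would invoke Euler's identity for homogeneous functions twice. As each $f_i$ is homogeneous of degree $e$, one has $\sum_{j'}x_{j'}F_{ij'}=ef_i$, i.e. $f_k=\tfrac1e\sum_{j'}x_{j'}F_{kj'}$; combining this with the cofactor relation $\sum_k F_{kj'}C_{kj}=\delta_{j'j}\det F$ (expansion by alien cofactors) gives $\sum_k f_kC_{kj}=\tfrac1e x_j\det F$. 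Substituting back and applying Euler's identity to $h$, namely $\sum_j h_jx_j=dh$, produces $\sum_k f_kD_k=\tfrac1e\det F\sum_j h_jx_j=\tfrac de\,h\,\Jac(f_0,\dots,f_n)$. Plugging this into the previous display gives $\bigl(1+\tfrac de\bigr)\Jac(f_0,\dots,f_n)h^{n+1}$, as claimed.

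The only place the hypotheses enter is the division by $e$, which is legitimate since $e\in\Z\setminus\{0\}$ and $\car(\k)=0$, so $e$ is invertible in $\k$ and $1+\tfrac de=\tfrac{e+d}e$ makes sense; characteristic zero also guarantees that Euler's identity holds in the clean form above. I expect the only real care to be in organising the multilinear expansion so that the vanishing of the multi-$\mathbf{h}$ terms is transparent, and in bookkeeping the indices in the double cofactor computation. I would favour this route over the matrix-determinant-lemma shortcut $\det(hF+\mathbf{f}^{\mathrm t}\mathbf{h})=\det(hF)\bigl(1+\mathbf{h}(hF)^{-1}\mathbf{f}^{\mathrm t}\bigr)$, precisely because the multilinear argument never requires $F$ to be invertible and so remains valid even when $\Jac(f_0,\dots,f_n)=0$.
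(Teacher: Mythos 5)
Your argument is correct, and it coincides with the paper's proof up to and including the key intermediate identity
$$\Jac(hf_0,\ldots,hf_n)=h^{n+1}\Jac(f_0,\ldots,f_n)+h^n\sum_{k=0}^nf_k\cdot \Jac(f_0,\ldots,f_{k-1},h,f_{k+1},\ldots,f_n),$$
which is exactly your $h^{n+1}\det F+h^n\sum_k f_kD_k$ (the paper obtains it by viewing $\Jac$ as a derivation in each argument, you by row-multilinearity after the product rule; these are the same computation). The two proofs diverge in how they evaluate the residual sum. The paper observes that the remaining identity $e\sum_k f_kD_k=dh\,\Jac(f_0,\dots,f_n)$ is linear in $h$, reduces to the case of monomials, and handles $h=x_j$ by a direct column manipulation using Euler's relation for the $f_i$; this requires the preliminary verification for $h=x_0$ and an iteration to build up general monomials. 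You instead evaluate $\sum_k f_kD_k$ uniformly for arbitrary $h$: expanding $D_k$ along its $k$-th row, substituting Euler's relation $f_k=\tfrac1e\sum_{j'}x_{j'}F_{kj'}$, and collapsing the double sum via the alien-cofactor identity $\sum_k F_{kj'}C_{kj}=\delta_{j'j}\det F$, then finishing with Euler's identity for $h$. Your route buys a single closed computation with no reduction to monomials and no induction, at the cost of the cofactor bookkeeping; the paper's route stays at the level of whole-column operations and avoids cofactors entirely. Both correctly use $\car(\k)=0$ and $e\neq0$ only to divide by $e$ and to have Euler's identity for homogeneous rational functions, and your closing remark that the multilinear expansion needs no invertibility of $F$ is a genuine advantage over the matrix-determinant-lemma shortcut.
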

\begin{proof}
By assumption we have $f_i(tx_0,\dots,tx_n)=t^ef_i(x_0,\dots,x_n)$ for $i=0,\dots,n$ and for all $t\in\k^*$. Taking the derivative of both sides with respect to $t$ gives $$\sum_{j=0}^n \frac{\partial f_i}{\partial x_j}(tx_0,\dots,tx_n)x_j=et^{e-1}f_i(x_0,\dots,x_n),$$ and evaluating at $t=1$ we see that the $f_i$ satisfy $f_i=\frac{1}{e}\sum_{j=0}^n x_j\frac{\partial f_i}{\partial x_j}$.\par We first show that the result holds when $h=x_0$. Using linearity of $\det$ with respect to the first column, the fact that $f_i=\frac{1}{e}\sum_{j=0}^n x_j\frac{\partial f_i}{\partial x_j}$, and the fact that $\det$ is alternating, we obtain
\begin{eqnarray*}
\Jac(hf_0,\ldots,hf_n)&=&\det\begin{pmatrix}f_0+x_0\frac{\partial f_0}{\partial x_0} & x_0\frac{\partial f_0}{\partial x_1} & \dots & x_0\frac{\partial f_0}{\partial x_n}\\
\vdots &\vdots&&\vdots\\
f_n+x_0\frac{\partial f_n}{\partial x_0} & x_0\frac{\partial f_n}{\partial x_1} & \dots & x_0\frac{\partial f_n}{\partial x_n}\end{pmatrix}\\
&=&(x_0)^{n+1}\Jac(f_0,\dots,f_n)+(x_0)^n\frac 1e\sum_{j=0}^nx_j\det\begin{pmatrix}\frac{\partial f_0}{\partial x_j}&\frac{\partial f_0}{\partial x_1}&\dots&\frac{\partial f_0}{\partial x_n}\\
\vdots&\vdots&&\vdots\\
\frac{\partial f_n}{\partial x_j}&\frac{\partial f_n}{\partial x_1}&\dots&\frac{\partial f_n}{\partial x_n}\end{pmatrix}\\
&=&h^{n+1}\left(1+\tfrac 1e\right)\Jac(f_0,\dots,f_n),
\end{eqnarray*}
so the result holds for $h=x_0$. Analogously it holds for $h=x_j,\,j=1\dots,n$ and it also holds for $h=\lambda\in \k$. Applying it repeatedly, we obtain the case when $h$ is a monomial.

Since the Jacobian is a derivation in each of its arguments and zero whenever two of the arguments are equal, we may use the product rule repeatedly and obtain
\begin{equation*}\label{jac}
\Jac(hf_0,\ldots,hf_n)=h^{n+1}\Jac(f_0,\ldots,f_n)+h^n\sum_{i=0}^nf_i\cdot \Jac(f_0,\ldots,f_{i-1},h,f_{i+1},\ldots,f_n).
\end{equation*}
Hence, the result is equivalent to the following equality:
$$e\sum_{i=0}^n f_i\cdot\Jac(f_0,\ldots,f_{i-1},h,f_{i+1},\ldots,f_n)=d h\cdot \Jac(f_0,\ldots,f_n),$$
which is linear with respect to $h$, so the general case follows from the case of monomials.
\end{proof}
\begin{remark}
The Jacobian is an ancient tool, much used and studied in classical works. In \cite[Page 261]{Muir}, one can see the identity
$$\Jac(fx_0,\dots,fx_n)=f^n(f+x_0\frac{\partial f}{\partial x_0}+\dots +x_n\frac{\partial f}{\partial x_n})$$
for an arbitrary function $f$ (not necessarily homogeneous) and it is written "the proof is hardly needed". If $f$ is homogeneous of degree $m$, then $x_0\frac{\partial f}{\partial x_0}+\dots +x_n\frac{\partial f}{\partial x_n}=mf$ and the right hand side of the identity becomes $(1+m)f^{n+1}$, which is a special case of Lemma~\ref{Lemm:Jachom} (corresponding to the identity map).

For other relations and properties related to the Jacobian, see also \cite[Page~220]{Muir2} and \cite[Pages~228--229]{Pascal}.

\end{remark}
\begin{corollary}\label{jacsigma_n}If $\car(\k)=0$, then $\Jac(\sigma_n)=n(-1)^n\prod_{i=0}^n (x_i)^{n-1}.$
\end{corollary}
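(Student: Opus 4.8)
The plan is to write the components of $\sigma_n$ as a product of a single homogeneous polynomial with simple inverse functions, so that Lemma~\ref{Lemm:Jachom} applies directly and reduces the problem to the Jacobian of a diagonal map. Recall that $\sigma_n$ is given by the homogeneous polynomials $f_i=\prod_{j\neq i}x_j$ of degree $n$, and observe that each factors as $f_i=h\cdot(1/x_i)$ with $h=x_0x_1\cdots x_n$. Here $h$ is a homogeneous polynomial of degree $d=n+1$, while the functions $g_i:=1/x_i$ are homogeneous rational functions of degree $e=-1$. The only point requiring a moment's care is that we are invoking the lemma with a negative value of $e$, which is legitimate since the statement of Lemma~\ref{Lemm:Jachom} allows $e\in\mathbb{Z}\setminus\{0\}$.

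First I would apply Lemma~\ref{Lemm:Jachom} with these values of $d$ and $e$, obtaining
$$\Jac(f_0,\ldots,f_n)=\Big(1+\tfrac{n+1}{-1}\Big)\Jac(g_0,\ldots,g_n)\,h^{n+1}=-n\,\Jac(g_0,\ldots,g_n)\,(x_0\cdots x_n)^{n+1}.$$
This moves all the difficulty into computing the single Jacobian $\Jac(1/x_0,\ldots,1/x_n)$, which is immediate: since $\partial(1/x_i)/\partial x_j$ vanishes for $i\neq j$ and equals $-1/x_i^2$ for $i=j$, the relevant matrix is diagonal, so
$$\Jac(g_0,\ldots,g_n)=\prod_{i=0}^n\Big(-\frac{1}{x_i^2}\Big)=(-1)^{n+1}\prod_{i=0}^n\frac{1}{x_i^2}.$$

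Finally I would substitute this back and simplify. Combining the powers of the variables via $(x_0\cdots x_n)^{n+1}\prod_{i=0}^n x_i^{-2}=\prod_{i=0}^n x_i^{n-1}$ and combining the scalars via $-n\cdot(-1)^{n+1}=n(-1)^n$ yields $\Jac(\sigma_n)=n(-1)^n\prod_{i=0}^n(x_i)^{n-1}$, as claimed. I do not expect any genuine obstacle here: the whole argument is a bookkeeping exercise once the factorization $f_i=h\cdot(1/x_i)$ is spotted, and the homogeneity lemma does all the real work. The only thing to keep in mind throughout is the sign and degree bookkeeping coming from the negative degree $e=-1$, which is precisely what produces the factor $-n$ and ultimately the $(-1)^n$ in the answer.
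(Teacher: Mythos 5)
Your proposal is correct and follows exactly the paper's own argument: write $\sigma_n=[h/x_0:\dots:h/x_n]$ with $h=\prod x_i$, apply Lemma~\ref{Lemm:Jachom} with $d=n+1$, $e=-1$, and evaluate the diagonal Jacobian $\Jac(x_0^{-1},\dots,x_n^{-1})$. The sign and exponent bookkeeping all checks out.
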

\begin{proof} Since $\sigma_n=[\frac h{x_0}:\dots:\frac h{x_n}]$ with $h=\prod_{i=0}^nx_i$, it follows by Lemma~\ref{Lemm:Jachom} that
$$\Jac(\sigma_n)=\left(1+\tfrac {n+1}{-1}\right)\Jac(x_0^{-1},\dots,x_n^{-1})h^{n+1}
=n(-1)^n\prod_{i=0}^n (x_i)^{n-1}.
$$
\end{proof}
\begin{remark}
One can check by hand that Corollary~\ref{jacsigma_n} also holds if $\car(\k)>0$, even if Lemma~\ref{Lemm:Jachom} does not hold, but this will not be used in the sequel.
\end{remark}
\begin{proposition}\label{Prop:Hnk}
Suppose that $\car(\k)=0$ and let  $n,k\ge 2$ be some integers such that $k$ divides $n+1$.

\begin{enumerate}[$(1)$]
\item
The set 
\[H_{n,k}=\{f\in \Bir(\p^n)\mid \Jac(f)=\lambda h^{k}\textrm{ for some }h\in \k[x_0,\dots,x_n], \lambda\in \k^*\}\]
is a subgroup of $\Bir(\p^n)$ that contains $\Aut(\p^n)$.
\item
The group $H_{n,k}$ contains $\sigma_n$ if and only if $k=2$ and $n$ is odd.
\item
For each irreducible hypersurface $S\subset \p^n$ of degree $d$ prime to $n+1$, the group $\Aut(\p^n\setminus S)$  is contained in $H_{n,n+1}$, but $\sigma_n\notin H_{n,n+1}$.
\item
In particular, taking $S$ to be a line in $\p^2$, we have $$\langle\Aut(\p^2),\Aut(\p^2\setminus S)\rangle=\langle\Aut(\p^2),\Aut(\A^2)\rangle \subset H_{2,3} \subsetneq\Bir(\p^2)$$ $($the same for $n\ge 3$ being obvious because of hypersurfaces contracted$)$.
\end{enumerate}
\end{proposition}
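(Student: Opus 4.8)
The plan is to control the Jacobian under composition and inversion, and then to read off each assertion from the explicit Jacobian of $\sigma_n$ computed in Corollary~\ref{jacsigma_n}. The technical heart is a composition formula. If $\varphi=[f_0:\dots:f_n]$ and $\psi=[g_0:\dots:g_n]$ lie in $\Bir(\p^n)$, with the $f_i$ homogeneous of degree $D$ and the $g_i$ homogeneous of degree $e$, both families without common factor, the chain rule says that the Jacobian matrix of $(f_0(g),\dots,f_n(g))$ is the product of the Jacobian matrix of $(f_0,\dots,f_n)$ evaluated at $g=(g_0,\dots,g_n)$ with the Jacobian matrix of $(g_0,\dots,g_n)$; taking determinants gives
$$\Jac(f_0(g),\dots,f_n(g))=\Jac(f_0,\dots,f_n)(g_0,\dots,g_n)\cdot\Jac(g_0,\dots,g_n).$$
Writing $c=\gcd(f_0(g),\dots,f_n(g))$, so that $\varphi\psi=[f_0(g)/c:\dots:f_n(g)/c]$, Lemma~\ref{Lemm:Jachom} relates the left-hand side to $\Jac(\varphi\psi)$ and a factor $c^{n+1}$. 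Combining the two, I obtain, up to a nonzero scalar in $\k^*$,
$$\Jac(\varphi\psi)=\Jac(\varphi)(g_0,\dots,g_n)\cdot\Jac(\psi)\cdot c^{-(n+1)}.$$
In characteristic zero every Jacobian here is nonzero (a birational map is separable), and only the shape of this identity matters below.

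For part $(1)$, one has $\Jac(\id)=1$ and, for $\varphi\in\Aut(\p^n)$, $\Jac(\varphi)=\det(\varphi)\in\k^*$; both have the form $\lambda h^k$, so $\Aut(\p^n)\subset H_{n,k}$. For closure under composition, suppose $\Jac(\varphi)=\lambda_1h_1^k$ and $\Jac(\psi)=\lambda_2h_2^k$. The composition formula together with the hypothesis $k\mid n+1$ gives, up to a scalar,
$$\Jac(\varphi\psi)=\left(h_1(g_0,\dots,g_n)\,h_2\,c^{-(n+1)/k}\right)^{k}.$$
The left-hand side is a polynomial, so the expression inside the parentheses is a rational function whose $k$-th power is (a scalar multiple of) a polynomial; since $\k[x_0,\dots,x_n]$ is a UFD, that rational function is itself a polynomial, whence $\varphi\psi\in H_{n,k}$. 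Applying the same formula to $\varphi\varphi^{-1}=\id$ yields $\Jac(\varphi^{-1})=\bigl(c^{(n+1)/k}/h_1(g)\bigr)^{k}$ up to a scalar, and the identical UFD argument gives $\varphi^{-1}\in H_{n,k}$. Thus $H_{n,k}$ is a subgroup.

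Parts $(2)$ and $(3)$ rest on Corollary~\ref{jacsigma_n}, namely $\Jac(\sigma_n)=n(-1)^n\prod_{i=0}^n x_i^{\,n-1}$. As this is a monomial, $\Jac(\sigma_n)=\lambda h^k$ forces $h$ to be a monomial (UFD again), so $\sigma_n\in H_{n,k}$ if and only if $k\mid n-1$; combined with the standing hypothesis $k\mid n+1$ this gives $k\mid 2$, i.e. $k=2$ and $n$ odd, proving $(2)$. For $(3)$ with $k=n+1$ the criterion would demand $(n+1)\mid(n-1)$, impossible for $n\ge2$, so $\sigma_n\notin H_{n,n+1}$. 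To see $\Aut(\p^n\setminus S)\subset H_{n,n+1}$, observe that $\varphi\in\Aut(\p^n\setminus S)$ is a local isomorphism at every point of $\p^n\setminus S$, so in characteristic zero the vanishing locus $\{\Jac(\varphi)=0\}$ is contained in the irreducible hypersurface $S=\{s=0\}$; hence $\Jac(\varphi)=\lambda s^m$ for some $m\ge0$. Comparing degrees and using that $\Jac(\varphi)$ has degree $(n+1)(D-1)$ with $D=\deg\varphi$ gives $m\deg S=(n+1)(D-1)$, so $(n+1)\mid m\deg S$; since $\gcd(\deg S,n+1)=1$ we get $(n+1)\mid m$ and $\Jac(\varphi)=\lambda(s^{m/(n+1)})^{n+1}$, i.e. $\varphi\in H_{n,n+1}$.

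Finally, part $(4)$ follows formally: a line $S\subset\p^2$ has complement $\p^2\setminus S\cong\A^2$, so $\Aut(\p^2\setminus S)$ and $\Aut(\A^2)$ are the same subgroup of $\Bir(\p^2)$ (and $\Aut(\p^2)$ appears on both sides), giving the first equality. As $H_{2,3}$ is a group (part $(1)$) containing both $\Aut(\p^2)$ and $\Aut(\p^2\setminus S)$ (part $(3)$ with $\deg S=1$, $\gcd(1,3)=1$), it contains the subgroup they generate, while $\sigma_2\notin H_{2,3}$ shows $H_{2,3}\subsetneq\Bir(\p^2)$. I expect the main obstacle to be the first step: pinning down the composition formula with its common-factor bookkeeping through Lemma~\ref{Lemm:Jachom}, and recognising that the hypothesis $k\mid n+1$ is exactly what converts $c^{n+1}$ into a $k$-th power, so that $H_{n,k}$ is stable under composition and inversion. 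Once that identity is secured, parts $(2)$–$(4)$ are short arithmetic and geometric consequences.
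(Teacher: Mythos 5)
Your proof is correct and follows essentially the same route as the paper: the chain rule combined with Lemma~\ref{Lemm:Jachom} to account for the common factor $c$, the observation that $k\mid n+1$ turns $c^{n+1}$ into a $k$-th power, and then Corollary~\ref{jacsigma_n} and the degree count $m\deg S=(n+1)(\deg\varphi-1)$ for parts $(2)$--$(4)$. The only difference is that you spell out the UFD step (a rational function whose $k$-th power is a polynomial is itself a polynomial), which the paper leaves implicit.
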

\begin{proof} Since the Jacobian of every element of $\Aut(\p^n)$ is an element of $\k^*$, we have $\Aut(\p^n)\subset H_{n,k}$.

Let $f,g\in \Bir(\p^n)$ be two birational maps of degree $d_1$, $d_2$, that we write as
$$f:[x_0:\dots:x_n]\dashmapsto [f_0(x_0,\dots,x_n):\dots:f_n(x_0,\dots,x_n)],$$
$$g:[x_0:\dots:x_n]\dashmapsto [g_0(x_0,\dots,x_n):\dots:g_n(x_0,\dots,x_n)],$$ with polynomials $f_i,g_i\in\k[x_0,\dots,x_n]$ such that both the $f_i$ and the $g_i$ are relatively prime.
Supposing that the composition $fg\in \Bir(\Pn)$ has degree $d_1d_2$, the chain rule states that $$\Jac(fg)=g^{*}(\Jac(f))\cdot \Jac(g),$$
where $g^{*}(\Jac(f))$ is obtained by replacing each $x_i$ with $g_i$ in $\Jac(f)$. If the degree of $fg$ is $d_1d_2-m$, for $m>0$, there is a homogeneous polynomial $h$ of degree $m$ that divides the formal composition of $f$ and $g$. Using Lemma~\ref{Lemm:Jachom} this implies that 
$$\Jac(fg)=\left(\tfrac{d_1d_2-m}{d_1d_2}\right)g^{*}(\Jac(f))\cdot \Jac(g)/h^{n+1}.$$
Since $n+1$ is a multiple of $k$, we see that $f,g\in H_{n,k}$ $\Rightarrow$ $fg\in H_{n,k}$. Moreover, taking $g=f^{-1}$ we obtain that $f\in H_{n,k}\Leftrightarrow f^{-1}\in H_{n,k}$. This concludes the proof of Assertion~$(1)$.

Assertion $(2)$  directly follows from Corollary~\ref{jacsigma_n}.

In order to prove $(3)$, denote by $h\in \k[x_0,\dots,x_n]$ the irreducible homogenous polynomial defining $S$ (which is unique up to multiple by an element of $\k^*$). For each $f\in \Aut(\p^n\setminus S)$, the Jacobian of $f$ only vanishes on $S$, hence $\Jac(f)=\lambda h^m$ for some integer $m\ge 0$ and some $\lambda\in \k^*$. We obtain then $$m\deg(h)=\deg(\Jac(f))=(n+1)(\deg(f)-1)$$ (see Definition~\ref{Defi:Jac}). By assumption, $\deg(h)$ and $n+1$ are coprime, so $n+1$ divides $m$. This implies that $f\in H_{n,n+1}$. 
Assertion $(4)$  corresponds to the special case where $n=2$ and $\deg(h)=1$.
\end{proof}
\begin{corollary}\label{Corollary:JacOdd}
If $n$ is odd and $\car(\k)=0$, the Jacobian of each element of $G_n(\k)$ is equal to $\lambda p^2$, for some $\lambda\in \k$ and some homogeneous polynomial $p\in \k[x_0,\dots,x_n]$.
\end{corollary}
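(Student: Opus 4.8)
The plan is to deduce this directly from Proposition~\ref{Prop:Hnk} applied with $k=2$. First I would observe that since $n$ is odd, the integer $n+1$ is even, so $2$ divides $n+1$ and the hypotheses of Proposition~\ref{Prop:Hnk} are met with $k=2$. By part $(1)$ of that proposition, the set $H_{n,2}$ is then a subgroup of $\Bir(\p^n)$ containing $\Aut(\p^n)$, and by part $(2)$, precisely because $k=2$ and $n$ is odd, it also contains $\sigma_n$.

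Since $G_n(\k)=\langle \sigma_n,\Aut(\p^n)\rangle$ is by definition the subgroup generated by $\sigma_n$ and $\Aut(\p^n)$, while $H_{n,2}$ is a subgroup containing both of these generators, one immediately obtains the inclusion $G_n(\k)\subseteq H_{n,2}$. Unwinding the definition of $H_{n,2}$, every element $g\in G_n(\k)$ therefore has Jacobian of the form $\Jac(g)=\lambda p^2$ with $p\in\k[x_0,\dots,x_n]$ homogeneous and $\lambda\in\k^*$, which is exactly the claimed conclusion (with $\lambda\in\k^*\subseteq\k$).

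I do not expect any serious obstacle at this stage: the entire substance has already been absorbed into Proposition~\ref{Prop:Hnk}, whose proof dealt with the one delicate point, namely the behaviour of the Jacobian under composition when the degree of the composite drops, handled through Lemma~\ref{Lemm:Jachom}. The only remaining thing to verify is the numerical compatibility $2\mid n+1$, which is precisely the parity hypothesis on $n$. I would also note that the statement phrases the scalar as $\lambda\in\k$ rather than $\lambda\in\k^*$; in characteristic zero the Jacobian of a birational map is nonzero, so in fact $\lambda\neq 0$, but this weaker formulation causes no difficulty.
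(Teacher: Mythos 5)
Your argument is exactly the paper's: both deduce the corollary from Proposition~\ref{Prop:Hnk} by noting that $2\mid n+1$ when $n$ is odd, so that $H_{n,2}$ is a subgroup containing $\Aut(\p^n)$ and $\sigma_n$, hence all of $G_n(\k)$. The proposal is correct, and the side remark about $\lambda\in\k^*$ versus $\lambda\in\k$ is accurate.
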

\begin{proof}
Using the notation of Proposition~\ref{Prop:Hnk}, this corresponds to saying that $G_n(\k)$ is contained in the group $H_{n,2}$. This is because $\Aut(\p^n)\subset H_{n,2}$ and $\sigma_n\in H_{n,2}$, as observed in the proposition (or in Corollary~\ref{jacsigma_n}).
\end{proof}
\begin{corollary}\label{Coro:QuadrBirInv}
If $n$ is odd and $\car(\k)=0$, the quadratic birational involution of $\p^n_\k$ given by 
$$[x_0:\dots:x_n]\dashmapsto [\frac{x_1x_2}{x_0}:x_1:x_2:\dots:x_n]=[x_1x_2:x_0x_1:\dots:x_0x_n]$$
does not belong to $G_n$.
\end{corollary}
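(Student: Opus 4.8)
The plan is to compute the Jacobian of the given quadratic involution and check that it is not a square times a constant, which by Corollary~\ref{Corollary:JacOdd} will immediately exclude it from $G_n(\k)$.

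First I would write the map in the homogeneous form
$$[x_0:\dots:x_n]\dashmapsto [x_1x_2:x_0x_1:x_0x_2:x_0x_3:\dots:x_0x_n],$$
so that the defining polynomials are $f_0=x_1x_2$, $f_1=x_0x_1$, $f_2=x_0x_2$, and $f_i=x_0x_i$ for $3\le i\le n$; these are homogeneous of degree $2$ without common factor. Then I would compute $\Jac(f_0,\dots,f_n)$ directly as the determinant of the $(n+1)\times(n+1)$ Jacobian matrix. The key observation is that the factor $h=x_0$ is common to $f_1,\dots,f_n$, so I can factor it out: writing $f_i=x_0 g_i$ for $i\ge 1$ with $g_1=x_1,\ g_2=x_2,\ g_i=x_i$, the structure of the matrix is simple enough that the determinant splits into a power of $x_0$ times a manageable remaining factor. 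Expanding along the first row (whose entries are $0,x_2,x_1,0,\dots,0$) reduces the computation to a couple of cofactors, each of which is again essentially lower-triangular.

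Carrying out this expansion, I expect the answer to be of the form $\lambda\, x_0^{\,n-1}\,(\text{linear or quadratic factor})$ where the exponent $n-1$ is even (since $n$ is odd) but the remaining factor contributes an odd multiplicity to some irreducible hypersurface. Concretely, I anticipate obtaining something like $\Jac=c\,x_0^{\,n-2}\,x_1 x_2$ for a nonzero constant $c$, so that the irreducible factors $x_1$ and $x_2$ each appear to an odd power. Since $n$ is odd, Corollary~\ref{Corollary:JacOdd} asserts that every element of $G_n(\k)$ has Jacobian of the form $\lambda p^2$, in which every irreducible factor appears with even multiplicity. The presence of a factor of odd multiplicity in $\Jac$ therefore shows the involution is not in $G_n(\k)$.

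The main obstacle is purely the bookkeeping of the determinant: I must verify that the exponents of the irreducible factors $x_0,x_1,x_2$ do not conspire to all be even. The delicate point is the exponent of $x_0$, which depends on $n$ through the $n-2$ copies of $x_0$ coming from the indices $3\le i\le n$ together with the contributions of $f_1,f_2$; I would check its parity carefully against the parity of the exponents of $x_1$ and $x_2$. Once it is confirmed that at least one irreducible factor occurs to an odd power, the conclusion follows at once from Corollary~\ref{Corollary:JacOdd}, with no need to appeal to the geometry of contracted hypersurfaces.
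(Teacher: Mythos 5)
Your proposal is correct and follows exactly the paper's argument: compute the Jacobian of the homogeneous representative and apply Corollary~\ref{Corollary:JacOdd}. The only slip is in your anticipated exponent of $x_0$ — the Jacobian is $-2x_0^{n-1}x_1x_2$ (most quickly obtained via Lemma~\ref{Lemm:Jachom} with $h=x_0$ applied to $(\tfrac{x_1x_2}{x_0},x_1,\dots,x_n)$, rather than by direct cofactor expansion), not $c\,x_0^{n-2}x_1x_2$ — but this is harmless since, as you observe, the odd multiplicity of $x_1$ and $x_2$ already prevents the Jacobian from being $\lambda p^2$.
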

\begin{proof}
The Jacobian of this map is $-2x_0^{n-1}x_1x_2.$ The result follows then from Corollary~\ref{Corollary:JacOdd}.
\end{proof}

\begin{example}
We will see in Example~\ref{Exam:Gizatullin} that the map
$$\theta_n\colon [x_0:\dots:x_n]\dashmapsto [x_0x_1:(x_0)^2:x_1x_2:\dots:x_1x_n]$$ belongs to $\in G_n(\k)$.
It satisfies $\Jac(\theta_n)=-2(x_0)^2\cdot (x_1)^{n-1}$ and this shows in particular that Corollary~\ref{Corollary:JacOdd} cannot be generalised to even dimension. 
\end{example}

Note that Corollary~\ref{Coro:QuadrBirInv} can be easily generalised to the following:
\begin{corollary}\label{Coro:XYnot}
Let $P_1,P_2\in \k[x_1,\dots,x_n]\setminus \{0\}$ be homogeneous of degree $1$ and $2$ respectively, such that $P_2$ defines a reduced quadric of $\Pn$.
If $n\ge 3$ is odd and $\car(\k)=0$, the quadratic birational transformation of $\p^3_\k$ given by 
\begin{eqnarray*}
[x_0:\dots:x_n]&\dashmapsto&  [\frac{x_0P_1(x_1,\dots,x_n)+P_2(x_1,\dots,x_n)}{x_0}:x_1:x_2:\dots:x_n]\\
&=&[x_0P_1(x_1,\dots,x_n)+P_2(x_1,\dots,x_n):x_0x_1:\dots:x_0x_n]\end{eqnarray*}
does not belong to $G_n(\k)$.\end{corollary}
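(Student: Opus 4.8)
The plan is to reduce Corollary~\ref{Coro:XYnot} to a Jacobian computation, exactly as in the proof of Corollary~\ref{Coro:QuadrBirInv}. By Corollary~\ref{Corollary:JacOdd}, every element of $G_n(\k)$ has Jacobian of the form $\lambda p^2$ with $p$ homogeneous and $\lambda\in\k$; so it suffices to exhibit, for the map $\varphi$ in the statement, a Jacobian $\Jac(\varphi)$ that is \emph{not} of this form, i.e.\ which has an irreducible factor appearing to an odd power. Write $\varphi$ in the homogeneous form given on the right-hand side, namely $[g_0:\dots:g_n]$ with $g_0=x_0P_1(x_1,\dots,x_n)+P_2(x_1,\dots,x_n)$ and $g_i=x_0x_i$ for $i=1,\dots,n$. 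These are homogeneous quadratics, and one checks they have no common factor (the only candidate would be a factor of all the $x_0x_i$, i.e.\ a factor of $x_0$, but $x_0\nmid g_0$ since $P_2\ne 0$). Hence $\Jac(\varphi)$ is well defined.

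First I would compute $\Jac(\varphi)=\det\bigl((\partial g_i/\partial x_j)_{i,j=0}^n\bigr)$ directly. The last $n$ rows are those of $x_0x_i$, whose partial derivatives are $\partial(x_0x_i)/\partial x_0=x_i$, $\partial(x_0x_i)/\partial x_i=x_0$, and $0$ otherwise; the first row is $(\partial g_0/\partial x_0,\dots,\partial g_0/\partial x_n)=\bigl(P_1,\;x_0\tfrac{\partial P_1}{\partial x_1}+\tfrac{\partial P_2}{\partial x_1},\dots\bigr)$. Expanding this determinant—most cleanly by row/column operations that pull a factor of $x_0$ out of the block coming from the last $n$ rows—I expect to obtain a result of the shape $c\,x_0^{\,n-1}\cdot Q$ where $Q$ is a quadratic (in fact a scalar multiple of $P_2$, up to lower-order interaction terms), with $c$ a nonzero integer divisible by $2$; this matches the special case $P_1=0$, $P_2=x_1x_2$ of Corollary~\ref{Coro:QuadrBirInv}, where the Jacobian was $-2x_0^{\,n-1}x_1x_2$. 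The key observation is that since $n$ is odd, the exponent $n-1$ of $x_0$ is \emph{even}, so that factor does not obstruct being a square; the obstruction must instead come from the factor $Q$.

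The point, therefore, is to arrange that the quadratic factor $Q$ is \emph{reduced but not a perfect square}, so that its irreducible factorisation contains some irreducible to an odd (namely first) power; this is exactly where the hypothesis that $P_2$ defines a reduced quadric enters. Concretely I expect $Q$ to be (a scalar times) $P_2$ itself, or $P_2$ modified by a square completion absorbing $P_1$; a reduced quadric is by definition not a square of a linear form, so $Q$ is squarefree and nonconstant, and $\Jac(\varphi)=\lambda x_0^{\,n-1}Q$ cannot be written as $\mu p^2$. By Corollary~\ref{Corollary:JacOdd} this forces $\varphi\notin G_n(\k)$.

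**The hard part will be** the determinant bookkeeping: confirming that the cross terms involving $P_1$ and the off-diagonal derivatives $\partial P_2/\partial x_i$ collapse into a clean multiple of a reduced quadric rather than conspiring to produce a perfect square, and checking that the resulting integer coefficient is nonzero in characteristic $0$. To handle this efficiently I would first treat the generic affine model $x_0=1$, reducing $\varphi$ to the involution-like map $(x_1,\dots,x_n)\mapsto\bigl(P_1+P_2,\,\text{coordinates}\bigr)$ suitably interpreted, compute the dehomogenised Jacobian there, and then rehomogenise using Lemma~\ref{Lemm:Jachom} to recover the $x_0^{\,n-1}$ factor cleanly; alternatively, a change of coordinates fixing $x_0$ that diagonalises $P_2$ (possible since $\car(\k)\ne 2$ and the quadric is reduced) would simplify the computation to essentially the $P_2=\sum x_i^2$ case, from which the general reduced case follows by the $\GL$-invariance of the squarefree-versus-square dichotomy.
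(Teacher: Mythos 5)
Your proposal is correct and follows essentially the same route as the paper: compute $\Jac(\varphi)$, find it equals $-2P_2\cdot (x_0)^{n-1}$, and conclude via Corollary~\ref{Corollary:JacOdd} using that $n-1$ is even and $P_2$ is squarefree and coprime to $x_0$. The cross terms you worry about do collapse cleanly --- your bordered-diagonal determinant gives $x_0^nP_1-x_0^{n-1}\sum_j x_j(x_0\partial P_1/\partial x_j+\partial P_2/\partial x_j)=-2P_2x_0^{n-1}$ by Euler's relation --- while the paper reaches the same formula slightly more directly by observing that $\Jac(P_1+\tfrac{P_2}{x_0},x_1,\dots,x_n)=-P_2/x_0^2$ (a near-identity matrix) and then multiplying all coordinates by $x_0$ via Lemma~\ref{Lemm:Jachom}, which is exactly your proposed dehomogenise-and-rehomogenise alternative.
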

\begin{proof}
Since $$\Jac\left(\frac{x_0P_1(x_1,\dots,x_n)+P_2(x_1,\dots,x_n)}{x_0},x_1,x_2,\dots,x_n\right)=\frac{\partial }{\partial x_0} \left(\frac{P_2}{x_0}\right)=- \frac{P_2}{(x_0)^2}$$
we have
$$\Jac\left(x_0P_1(x_1,\dots,x_n)+P_2(x_1,\dots,x_n),x_0x_1,x_0x_2,\dots,x_0x_n\right)=-2P_2(x_1,\dots,x_n)\cdot (x_0)^{n-1}$$
by Lemma~\ref{Lemm:Jachom}. The result follows then from Corollary~\ref{Corollary:JacOdd}.
\end{proof}
\begin{remark}
Corollaries~\ref{Coro:QuadrBirInv} and~\ref{Coro:XYnot} also hold in positive characteristic (even in characteristic $2$). This can be observed geometrically, with the tools developed in  Section~\ref{Sec:ResolutDiscrepancy}, but also by computing the \emph{affine Jacobian}: To a birational map 
$$f\colon [x_0:\dots:x_n]\dashmapsto [f_0(x_0,\dots,x_n):\dots:f_n(x_0,\dots,x_n)] $$
we can associate its \emph{affine Jacobian}. This is $$\det\left(\left(\frac{\partial g_i}{\partial x_j}\right)_{i,j=1}^n\right)\in \k(x_1,\dots,x_n),$$
where $g_i(x_1,\dots,x_n)=\frac{f_i(1,x_1,\dots,x_n)}{f_0(1,x_1,\dots,x_n)}$, $i=1,\dots,n$, are the coordinates of the restriction of $f$ to the affine open subset where $x_0=1$. One can check that this affine Jacobian is $(-1)^n(\frac{1}{x_1\dots x_n})^{2}$ for $\sigma_n$, so is a square up to scalar multiple (and is not zero, even if $\car(\k)>0$). For $n$ odd, one can then see that the same holds for linear automorphisms and for elements of $G_n(\k)$, by using chain rule.
\end{remark}

\section{Resolution of the standard involution and result on discrepancies}\label{Sec:ResolutDiscrepancy}
Let us recall the following easy and well known result: the standard quadratic involution $\sigma_2\colon \p^2_\k\dasharrow \p^2_\k$ can be factorised as the blow-up of the three coordinate points $[1:0:0]$, $[0:1:0]$, $[0:0:1]$ followed by the contraction of the three coordinate lines passing through $2$ of these $3$ points.

We can generalise this observation in the following way.

\begin{proposition}\label{resolution}
Let $n\ge 2$, let $I_n=\{0,\dots,n\}$ and for each $d\in\{0,\dots,n-2\}$ let $\Omega_d$ be the set of all subsets of $I_n$ of size $n-d$. For each element $\Delta \in \Omega_d$, we denote by $X_\Delta\subset \p^n_\k$ the linear subset of dimension $d$ defined by $x_i=0$ for each $i\in \Delta$.

We then define inductively a sequence of birational morphisms $\pi_d \colon Y_{d+1}\to Y_d$, $d=0,\dots,n-2$ in the following way:
\begin{enumerate}[$(1)$]
\item $Y_0=\p^n_\k$ and $\pi_0\colon Y_1\to \p^n_\k$ is the blow-up of all coordinate points, i.e. all sets $X_\Delta,$ where $\Delta\in \Omega_0$.
\item
For $d=1,\dots,n-2$, $\pi_d\colon Y_{d+1}\to Y_d$ is the blow-up of the strict transform of all varieties $X_\Delta,$ where $\Delta\in \Omega_d$.
\end{enumerate}
Let $Y=Y_{n-1}$, let $\pi\colon Y\to \p^n_\k$ denote the composition $\pi=\pi_0\circ \dots \circ \pi_{n-2}$, and denote by $E_\Delta\subset Y$ the strict transform of the exceptional divisor contracted on $X_\Delta$, for each $\Delta$ in some $\Omega_d$. The following holds:
\begin{enumerate}[$(1)$]
\item
The lift $\hat{\sigma}_n=\pi^{-1}\sigma_n\pi$ is a biregular automorphism of $Y$.
\item
For each $i\in \{0,\dots,d-2\}$ and each $\Delta\in \Omega_i$, the automorphism $\hat{\sigma}_n$ exchanges $E_\Delta$ with $E_{I_n\setminus \Delta}$.
\end{enumerate}
\end{proposition}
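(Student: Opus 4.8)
The plan is to recognise the whole construction as a toric one and to read off both assertions from the central symmetry of the resulting fan. Regard $\p^n_\k$ as the toric variety attached to the fan $\Sigma_0$ in $N_{\mathbb{R}}$, $N=\Z^n$, whose rays are generated by $u_1=e_1,\dots,u_n=e_n$ and $u_0=-(e_1+\dots+e_n)$, so that $\sum_{i=0}^n u_i=0$ and the torus-invariant prime divisor $V(u_i)$ is the coordinate hyperplane $\{x_i=0\}$. The maximal cones of $\Sigma_0$ are the $\mathrm{cone}(u_i:i\neq j)$. Under these conventions the coordinate subspace $X_\Delta$ is exactly the orbit closure $V(\sigma_\Delta)$ of the cone $\sigma_\Delta=\mathrm{cone}(u_i:i\in\Delta)$, and the standard involution $\sigma_n\colon x_i\mapsto 1/x_i$ is the toric birational map induced by $-\mathrm{id}\colon N\to N$.

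First I would translate each blow-up into a star subdivision. Blowing up $X_\Delta=V(\sigma_\Delta)$ is the star subdivision of the current fan along $\sigma_\Delta$, which inserts the single new ray generated by $u_\Delta:=\sum_{i\in\Delta}u_i$; the exceptional divisor over $X_\Delta$, together with all its later strict transforms, is the torus-invariant prime divisor $E_\Delta=V(u_\Delta)$. The key arithmetic observation is the identity $u_\Delta=-u_{I_n\setminus\Delta}$, coming from $\sum_{i\in I_n}u_i=0$. Carrying out the subdivisions in the prescribed order — all cones $\sigma_\Delta$ with $|\Delta|=n$ first, then $|\Delta|=n-1$, and so on down to $|\Delta|=2$ — I claim that the final fan $\Sigma$ of $Y=Y_{n-1}$ is the braid (permutohedral) fan: its rays are the $u_\Delta$ for all proper nonempty $\Delta\subset I_n$, and a set of rays spans a cone of $\Sigma$ precisely when the corresponding subsets form a chain for inclusion.

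Granting this description, both assertions are immediate. Since $-\mathrm{id}$ sends $u_\Delta$ to $-u_\Delta=u_{I_n\setminus\Delta}$, and since complementation $\Delta\mapsto I_n\setminus\Delta$ reverses inclusions and hence carries chains to chains, the map $-\mathrm{id}$ permutes the cones of $\Sigma$; that is, $-\mathrm{id}$ is an automorphism of the fan $\Sigma$. Consequently the induced toric map $\hat{\sigma}_n=\pi^{-1}\sigma_n\pi$ is a biregular automorphism of $Y$, which is assertion~$(1)$. On divisors it acts by $V(u_\Delta)\mapsto V(u_{I_n\setminus\Delta})$, i.e. $E_\Delta\mapsto E_{I_n\setminus\Delta}$; for $2\le|\Delta|\le n-1$ both divisors are exceptional, which gives the exchange $E_\Delta\leftrightarrow E_{I_n\setminus\Delta}$ of assertion~$(2)$. (The extreme cases $|\Delta|\in\{1,n\}$ record, compatibly, that the strict transform of the hyperplane $\{x_j=0\}$ is sent onto the exceptional divisor of the coordinate point $X_{I_n\setminus\{j\}}$.)

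The main obstacle is exactly the identification of $\Sigma$ with the braid fan — equivalently, the verification that performing the star subdivisions in order of decreasing $|\Delta|$ turns $\Sigma_0$, whose maximal cones $\mathrm{cone}(u_i:i\neq j)$ are described by singletons, into the fan whose cones are described by chains of subsets, creating all chains and no spurious cones. I would establish this by induction on the rounds of blow-ups, using that a star subdivision at a cone $\tau$ only refines the cones containing $\tau$: after subdividing all cones $\sigma_\Delta$ with a fixed value of $|\Delta|$, the cones $\sigma_{\Delta'}$ with $|\Delta'|$ smaller are still present and available for the next round, and an inductive description of the cones present after each round yields the braid fan after the final round $|\Delta|=2$. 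The remaining points — primitivity of the generators $u_\Delta$, smoothness of $Y$, and the persistence of each $E_\Delta$ as $V(u_\Delta)$ under the subsequent subdivisions — are routine toric bookkeeping. (Alternatively one can bypass the global fan statement and check directly, chart by chart and using the $\Sym(I_n)$-symmetry of the configuration to reduce to a single chart, that $\hat{\sigma}_n$ has no indeterminacy; but the toric symmetry argument is cleaner and yields assertion~$(2)$ at once.)
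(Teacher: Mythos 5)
Your toric argument is correct and takes a genuinely different route from the paper. The paper proves assertion $(1)$ by an explicit chart computation: it covers $Y$ by $(n+1)!$ affine charts indexed by the elements of $\Sym_{n+1}$ (equivalently by the complete flags $X_{\Delta_0}\subset\dots\subset X_{\Delta_{n-1}}$ of coordinate subspaces), writes $\pi$ in each chart as $(y_1,\dots,y_n)\mapsto\tau([1:y_1:y_1y_2:\dots:y_1\cdots y_n])$, and checks directly that $\sigma_n$ permutes these charts biregularly; assertion $(2)$ is then handled by a separate local computation on the blow-ups of $X_\Delta$ and $X_{I_n\setminus\Delta}$ alone. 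Your approach packages all of this into the statement that $Y$ is the permutohedral (braid-fan) toric variety and that $\sigma_n$ is the monomial map of $-\mathrm{id}$, which preserves the fan because complementation of subsets preserves chains; this yields $(1)$ and $(2)$ in one stroke and makes the divisor exchange $E_\Delta\leftrightarrow E_{I_n\setminus\Delta}$ transparent (your flags and the paper's charts are of course the same objects: the maximal cones of the braid fan). What the paper's method buys is complete self-containedness and the explicit local formula for $\hat\sigma_n$, which is reused implicitly later; what yours buys is conceptual clarity and an immediate proof of $(2)$ without a second computation. The one place where your write-up defers real work is the identification of the iterated star subdivision (in order of decreasing $|\Delta|$) with the braid fan; this is a standard fact and your inductive sketch --- star subdivision at $\sigma_\Delta$ only refines cones containing $\sigma_\Delta$, so the smaller $\sigma_{\Delta'}$ survive each round and the strict transform of $X_{\Delta'}$ remains the orbit closure $V(\sigma_{\Delta'})$ --- is the right one, but in a final version you should either carry out that induction or cite a reference for the permutohedral variety as this iterated blow-up of $\p^n$ along all coordinate subspaces in increasing dimension.
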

\begin{proof}
$(1)$ Denote by $\Sym_{n+1}\subset \Aut(\p^n_\k)$ the group of permutations of variables. The variety $Y_0=\p^n_\k$ is covered by the $n+1$ open subsets where $x_i\not=0$, $i=0,\dots,n$, each isomorphic to $\A^n_\k$ and each containing exactly one point blown up by $\pi_0$. We can moreover choose the isomorphism to be given by
$$(y_1,\dots,y_n)\mapsto \tau([1:y_1:\dots:y_n])$$
where $\tau \in \Sym_{n+1}$. The choice of $\tau$ is not unique, there are $n!$ permutations for one given chart. The point blown up by $\pi_0$ is the origin of $\A^n_\k$, so the blow-up of $\A^n_\k$ at the origin naturally embeds into $\A^n_\k\times \p^{n-1}$ and has then $n$ affine charts isomorphic to $\A^n_\k$, each one intersecting exactly one of the $n$ lines $X_\Delta$ passing through the point corresponding to the origin. We can then choose the charts so that the map $\pi_0$ corresponds to 
$$(y_1,\dots,y_n)\mapsto \tau([1:y_1:y_1y_2:\dots:y_1y_n]),$$
the exceptional divisor corresponds to $y_1=0$ and the line to $y_2=\dots=y_n=0$. Each of the $(n+1)n$ charts on $Y_1$ corresponds to a choice of a point $X_{\Delta_0}$ and a line $X_{\Delta_1}$ containing the point. Continuing in this way, we obtain exactly $(n+1)!$ charts on $Y$, parametrised by the elements  $\tau\in \Sym_{n+1}$ (or equivalently by the flags $X_{\Delta_0}\subset X_{\Delta_1}\subset \dots \subset X_{\Delta_{n-1}}$), such that the map $\pi$ corresponds in the corresponding chart to 
$$(y_1,\dots,y_n)\mapsto \tau([1:y_1:y_1y_2:\dots:y_1y_2\cdots y_n]).$$
Since $\sigma_n$ commutes with elements in $\Sym_{n+1}$, we have
$$\begin{array}{rcl}
\sigma_n\tau([1:y_1:y_1y_2:\dots:y_1y_n])&=&\tau([1:\frac{1}{y_1}:\frac{1}{y_1y_2}:\dots:\frac{1}{y_1y_2\cdots y_n}])\\
&=&\tau([y_1\dots y_n:y_2\dots y_n:y_3\cdots y_n:\dots:y_{n-1}y_n:y_n:1]),\end{array}$$
so the restriction of $\hat{\sigma}_n$ yields an isomorphism between each chart of $Y$ with another one.

$(2)$ Since $\hat\sigma_n$ is an automorphism of $Y$, it is enough to consider the blow-ups $\pi_{\Delta}\colon Z_\Delta\to \p^n_\k$  and $\pi_{I_n\setminus \Delta}\colon Z_{I_n\setminus\Delta}\to \p^n_\k$ of $X_\Delta$ and $X_{I_n\setminus\Delta}$ and check that the lift $\sigma'=(\pi_{I_n\setminus \Delta})^{-1}\sigma_n\pi_{\Delta}$ of $\sigma_n$ induces a birational map between the exceptional divisors.
By a change of variables, we may assume that $X_\Delta\subset \p^n_\k$ and $X_{I_n\setminus\Delta}\subset \p^n_\k$ are given respectively by $x_0=x_1=\ldots=x_k=0$ and $x_{k+1}=\ldots=x_n=0$, so the blow-ups of these two subsets are given locally by
$$\begin{array}{rcl}
\A^n_\k&\to&\p^n_\k\\
(y_1,\dots,y_n)& \mapsto & [y_1:y_1y_2:\dots:y_1y_{k+1}:y_{k+2}:\dots:y_n:1],\textrm{ and}\\ \\
\A^n_\k&\to&\p^n_\k\\
(y_1,\dots,y_n)& \mapsto & [1:y_1:\dots:y_k:y_{k+1}y_n:\dots:y_{n-1}y_n:y_n].\end{array}$$
In these coordinates, the exceptional divisors are given by $y_1=0$ and $y_n=0$ respectively and the map $\sigma'$ becomes
$$(y_1,\dots,y_n)\dashmapsto \left(\frac{1}{y_2},\frac{1}{y_3},\dots, \frac{1}{y_{k+1}},\frac{1}{y_{k+2}},\dots,\frac{1}{y_n},y_1\right).$$ 
Restricting to $y_1=0$ we get a birational map between the two exceptional divisors.
\end{proof}

With this description, we can show that the discrepancy of a hypersurface contracted by an element of $G_n(\k)$ is even when $n$ is odd. To explain what this means, we first recall the following definition.
\begin{definition}
Let $\psi\colon X\dasharrow Y$ be a birational map between two smooth projective varieties and let $H\subset X$ be an irreducible hypersurface. We can always find a birational morphism $\pi\colon Z\to Y$, such that $Z$ is a smooth projective variety and $\pi^{-1}\psi\colon X\dasharrow Z$ can be restricted to a birational map $H\dasharrow E$, for some irreducible hypersurface $E\subset Z$. We then define the \emph{discrepancy} of $H$ with respect to $\psi$ by the integer $a$ as the order of vanishing of $K_Z-\pi^{*}(K_Y)$ along $E$.
\end{definition}
\begin{remark}The discrepancy only depends on $\psi$ and $H$, and not on $\pi$ \cite[Proposition-Definition 4.4.1, page 179]{Matsuki}. In general, this definition is often used for terminal $\mathbb{Q}$-factorial singularities and the discrepancy can be a rational number, but we will only need the smooth case here. In particular, the discrepancies that we will consider are all integers.\end{remark}

\begin{remark}
If $Y=\p^n_\k$ and $\pi\colon X\to Y$ is the blow-up of an irreducible subvariety $\Gamma\subset \p^n_\k$ of dimension $d$, and $E\subset X$ is the exceptional divisor, then the discrepancy of $E$ with respect to $\pi$ is $n-d-1.$
\end{remark}

\begin{proposition}\label{Prop:Discrepancy}
Let $\k$ be any field and let $n\ge 3$ be odd. If $g\in G_n(\k)$ and $H\subset \Pn$ is an irreducible hypersurface, the discrepancy of $H$ with respect to $g$ is always even.
\end{proposition}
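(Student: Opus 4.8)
The plan is to reduce the statement to the characteristic-zero Jacobian computation already established, and to use the local description of $\hat\sigma_n$ from Proposition~\ref{resolution} to track how discrepancies behave under composition. The central idea is that discrepancy is an additive, field-independent invariant attached to the pair $(g,H)$, so I expect to decompose $g=\tau_0\sigma_n\tau_1\sigma_n\cdots \sigma_n\tau_r$ with $\tau_i\in\Aut(\Pn)$ and follow the chosen hypersurface through the word.

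\textbf{Step 1 (reduction to the prime field / to $\car(\k)=0$).}
First I would observe that the discrepancy of $H$ with respect to $g$ is a purely birational-geometric quantity, computed as the order of vanishing of $K_Z-\pi^*K_Y$ along a divisor $E$ dominated by $H$. This number is insensitive to the base field in the following sense: a word $g\in G_n(\k)$ and the hypersurface $H$ are defined over a finitely generated subring of $\k$, and the blow-up resolution of $\sigma_n$ given in Proposition~\ref{resolution} is defined over $\Z$ (the charts and the formula $(y_1,\dots,y_n)\mapsto\tau([1:y_1:y_1y_2:\dots:y_1\cdots y_n])$ involve no denominators depending on $\car(\k)$). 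Since $\hat\sigma_n$ is a biregular automorphism of $Y$ over any field, the combinatorics of which exceptional divisor is hit, and with what multiplicity, is the same in every characteristic. Hence it suffices to prove the parity statement when $\car(\k)=0$, where Corollary~\ref{Corollary:JacOdd} is available.

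\textbf{Step 2 (from the Jacobian to the discrepancy, in characteristic zero).}
Here the key is to relate the parity of the discrepancy to the multiplicity of $H$ in the Jacobian $\Jac(g)$. For a birational map in characteristic zero, the irreducible components of the zero locus of $\Jac(g)$ are exactly the hypersurfaces that $g$ contracts (equivalently, the divisorial components of the critical locus), and the order of vanishing of $\Jac(g)$ along such a component is precisely its discrepancy with respect to $g$. Indeed, writing $g$ by homogeneous polynomials and resolving, the ramification formula $K_Z=\pi^*g^*K_Y + \sum a_E E$ identifies each discrepancy $a_E$ with the multiplicity of the corresponding factor of the Jacobian; I would make this correspondence precise by passing to the affine chart where the Jacobian is the determinant of the differential of $g$ and reading off $K$-divisor vanishing orders along $E$. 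Granting this, Corollary~\ref{Corollary:JacOdd} tells us that $\Jac(g)=\lambda p^2$, so every irreducible factor of $\Jac(g)$ occurs with \emph{even} multiplicity; consequently the discrepancy of $H$, being the multiplicity of the factor cut out by $H$, is even.

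\textbf{Main obstacle.}
The delicate point is Step~2: justifying that the order of vanishing of $\Jac(g)$ along a contracted hypersurface equals its discrepancy. One must be careful that $\Jac(g)$ is the Jacobian of a chosen homogeneous representative (defined up to an $(n+1)$-st power of a scalar), and that the exceptional divisor $E$ appearing in the discrepancy definition is the one birational to $H$, not $H$ itself. The cleanest way around this, rather than a direct comparison on a single resolution, is probably to argue inductively along the word: show that each application of $\sigma_n$ changes the accumulated discrepancy of the tracked divisor by an \emph{even} amount, using the explicit local model $(y_1,\dots,y_n)\dashmapsto(1/y_2,\dots,1/y_{k+1},1/y_{k+2},\dots,1/y_n,y_1)$ from the proof of Proposition~\ref{resolution}(2), while automorphisms $\tau_i$ change it by $0$. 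The parity invariance then follows, with the Jacobian computation of Corollary~\ref{Corollary:JacOdd} serving as the characteristic-zero confirmation that the per-step change is even. I would expect the bookkeeping of how the dimension of the blown-up stratum $X_\Delta$ (and hence the discrepancy contribution $n-d-1$) interacts with the parity of $n$ to be where the oddness of $n$ is genuinely used.
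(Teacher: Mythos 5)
Your primary route breaks down at Step~1. A word $g=\tau_0\sigma_n\tau_1\cdots\sigma_n\tau_r$ with $\tau_i\in\Aut(\Pn)$ over a field of characteristic $p$ is defined over a finitely generated ring that still has characteristic $p$; there is no canonical lift of the $\tau_i$ to characteristic zero, and even an ad hoc lift of their coefficients changes the birational map in an essential way: which hypersurfaces get contracted, how degrees drop under composition, and the incidences of the images of coordinate points with the coordinate hyperplanes (which govern the base loci, cf.\ Example~\ref{Example:NotSamedegree}) all depend on arithmetic in $\k$. So ``the combinatorics is the same in every characteristic'' is not something you can assert; it is close to being the statement you are trying to prove. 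The resolution of $\sigma_n$ being defined over $\Z$ does not help, because the $\tau_i$ are not. Step~2 is fine as a heuristic in characteristic zero (the paper states the same identification of the Jacobian exponent with the discrepancy as a remark), but since Step~1 fails, Corollary~\ref{Corollary:JacOdd} cannot carry the general case.

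Your fallback in the ``Main obstacle'' paragraph is the correct strategy --- and it is the paper's actual proof --- but you stop exactly where the mathematical content lies. The claim that each application of $\sigma_n$ changes the discrepancy by an even amount is precisely what must be established, and the local model $(y_1,\dots,y_n)\dashmapsto(1/y_2,\dots,1/y_n,y_1)$ only shows that $\hat\sigma_n$ exchanges the exceptional divisors $E_\Delta$ and $E_{I_n\setminus\Delta}$; it does not by itself compute the change in discrepancy. What is needed is the canonical-divisor bookkeeping on the full resolution $Y$: writing $K_Y=\pi^*(K_{\Pn})+\sum_{j=0}^{n-2}(n-1-j)\hat E_j$, using $(\hat\sigma_n)^*(\hat E_j)=\hat E_{n-1-j}$, and comparing coefficients on a further resolution $\rho\colon Z\to Y$ on which the transform of $H$ is a divisor, one finds
$$(\pi\hat\sigma_n\rho)^*(K_{\Pn})-(\pi\rho)^*(K_{\Pn})=\rho^*\Bigl((n-1)\hat E_0-(n-1)\hat E_{n-1}+\sum_{j=1}^{n-2}(n-1-2j)\hat E_j\Bigr),$$
whose coefficients are all even exactly because $n$ is odd. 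Appealing to Corollary~\ref{Corollary:JacOdd} as ``confirmation that the per-step change is even'' is circular: that corollary is only available in characteristic zero, which is what Step~1 was supposed to supply. The proposal thus identifies the right mechanism but leaves the decisive computation, and with it the precise role of the parity of $n$, undone.
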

\begin{remark}
In characteristic $0$, one can see that the discrepancy of an irreducible hypersurface $H\subset \Pn$ with respect to $\varphi\in \Bir(\Pn)$ is the exponent of the equation of $H$ in $\Jac(\varphi)$. Hence, Proposition~\ref{Prop:Discrepancy} is the geometric version of Corollary~\ref{Corollary:JacOdd}. 
\end{remark}
\begin{proof}
Since the automorphisms of $\p^n_\k$ do not change the discrepancy, we only need to show that the discrepancy of $H$ with respect to $f$ differs by an even number from the discrepancy of $H$ with respect to $\sigma_n f$ for $f\in G_n(\k)$.\par
With the same notation as in Proposition~\ref{resolution}, let $Z$ be a smooth projective variety with a birational morphism $\rho:Z\longto Y$ such that the restriction of $(\pi\rho)^{-1}f:\Pn\dasharrow Z$ to $H$ is a birational map $H\dasharrow H_Z$ for some irreducible hypersurface $H_Z\subset Z$. This fact implies that the discrepancy of $H$ with respect to $f$ (respectively to $\sigma_n f$) is equal to the discrepancy of $H_Z$ with respect to $\pi\rho$ (respectively to $\sigma_n\pi\rho=\pi\hat{\sigma}_n\rho$).
\[\xymatrix{
&&Z\ar[d]^\rho&&\\
&&Y\ar[r]^{\hat\sigma_n}\ar[dl]_\pi&Y\ar[dr]^\pi&\\
\p^n_\k\ar@{-->}[r]^f&\p^n_\k\ar@{-->}[rrr]^{\sigma_n}&&&\p^n_\k\\}\]
For $d=0,\ldots,n-2$, let $E_d\in \Pic( Y_{d+1})$ be the exceptional divisor of $\pi_d\colon Y_{d+1}\to Y_d$, which is the sum of $\binom{n+1}{n-d}$ irreducible divisors, contracted by $\pi_d$ onto the strict transforms of the toric $d$-dimensional linear varieties of $\p^n_\k$.

 Using the ramification formula, $K_{Y_{d+1}}=\pi_d^*(K_{Y_d})+(n-1-d)E_d$, repeatedly  we get
\begin{eqnarray*}
K_Y&=&(\pi)^*(K_{\Pn})+E_{n-2}+\sum_{j=0}^{n-3}(n-1-j)(\pi_{j+1}\cdots\pi_{n-2})^*(E_{j}).
\end{eqnarray*}
Since the linear spaces blown up by $\pi_0,\dots,\pi_{n-2}$ are in increasing dimension, the strict transform of each $E_j$ on $Y$ is the same as its total transform $(\pi_j\cdots\pi_{n-2})^*(E_{j-1})$; we will denote it by $\hat{E}_j$ (in the notation of Proposition~\ref{resolution}, $\hat{E}_j=\sum_{\Delta\in \Omega_j} E_\Delta$).  With this notation the above formula becomes
\begin{eqnarray*}
K_Y&=&(\pi)^*(K_{\Pn})+\sum_{j=0}^{n-2}(n-1-j)\hat{E}_{j}.
\end{eqnarray*}
We also denote by $\hat{E}_{n-1}\in \Pic(Y)$ the strict transform of the union of the coordinate hyperplanes. Applying Proposition~\ref{resolution}, we obtain that $(\hat{\sigma}_n)^{*}(\hat{E}_j)=\hat{E}_{n-1-j}$ for ${j=0,\dots,n-1}$. Applying $(\hat{\sigma}_n)^{*}$ to the above formula we get
\begin{eqnarray*}
K_Y&=&(\pi\hat{\sigma}_n)^*(K_{\Pn})+\sum_{j=0}^{n-2}(n-1-j)\hat{E}_{n-1-j}.
\end{eqnarray*}
It remains to compare the coefficients of $H_Z$ in $K_Z-(\pi\rho)^{*}(K_{\Pn})$ and $K_Z-(\pi\hat{\sigma}_n\rho)^{*}(K_{\Pn})$ and to see that the difference is even:

\begin{eqnarray*}
(\pi\hat{\sigma}_n\rho)^{*}(K_{\Pn})-(\pi\rho)^{*}(K_{\Pn})&=&\rho^{*}((\pi\hat{\sigma}_n)^{*}(K_{\Pn})-\pi^{*}(K_{\Pn}))\\
&=&\rho^{*}\left((K_Y-\sum_{j=0}^{n-2}(n-1-j)\hat{E}_{n-1-j})-(K_Y-\sum_{j=0}^{n-2}(n-1-j)\hat{E}_{j})\right)\\
&=&\rho^{*}\left(\sum_{j=0}^{n-2}(n-1-j)\hat{E}_{j}-\sum_{j=0}^{n-2}(n-1-j)\hat{E}_{n-1-j}\right)\\
&=&\rho^{*}\left(\sum_{j=0}^{n-2}(n-1-j)\hat{E}_{j}-\sum_{j=1}^{n-1}j\hat{E}_{j}\right)\\
&=&\rho^{*}\left((n-1)\hat{E}_0-(n-1)\hat{E}_{n-1}+\sum_{j=1}^{n-2}(n-1-2j)\hat{E}_{j}\right).
\end{eqnarray*}
All coefficients of the above sum are even, since $n$ is odd.
\end{proof}
\begin{remark}
The proof above also shows that composing with elements of $G_n(\k)$ do not change the parity of the discrepancies, when $n$ is odd. It can then be used to say that two elements $\varphi_1,\varphi_2\in \Bir(\Pn)$ are not equal, up to right and left multiplication by elements of $G_n(\k)$.
\end{remark}
\begin{lemma}\label{Lem:SentOnAnyDimension}
Let $\k$ be any field and let $n> m\ge 2$.
Then the birational map $$\varphi\colon [x_0:\dots:x_n]\dashmapsto [x_0:x_1 :x_2\frac{x_1}{x_0}:\dots:x_m \frac{x_1}{x_0}:x_{m+1}:\dots:x_n]$$
sends the hyperplane $H_1\subset \Pn$ given by $x_1=0$ onto the exceptional divisor of the linear subspace $\Gamma\subset \Pn$ given by $x_1=x_2=\dots=x_m=0$ and of dimension $n-m$.
\end{lemma}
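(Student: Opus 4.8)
The plan is to make both $\varphi$ and the blow-up of $\Gamma$ completely explicit, and then to obtain the restriction to $H_1$ by cancelling a common factor and passing to a limit. First I would clear denominators and write $\varphi$ via homogeneous quadrics,
$$\varphi\colon [x_0:\dots:x_n]\dashmapsto [x_0^2:x_0x_1:x_1x_2:\dots:x_1x_m:x_0x_{m+1}:\dots:x_0x_n];$$
computing its inverse in the chart $x_0=1$ as $(x_1,\dots,x_n)\mapsto (x_1,x_2/x_1,\dots,x_m/x_1,x_{m+1},\dots,x_n)$ shows that $\varphi$ is birational. Next I would use the standard model for the blow-up of $\Gamma=\{x_1=\dots=x_m=0\}$, which has codimension $m$ and dimension $n-m$: namely $X=\{x_iu_j=x_ju_i\mid 1\le i,j\le m\}\subset \Pn\times \p^{m-1}$, with $[u_1:\dots:u_m]$ coordinates on $\p^{m-1}$, $\pi\colon X\to\Pn$ the first projection, and exceptional divisor $E=\pi^{-1}(\Gamma)=\Gamma\times\p^{m-1}$ of dimension $(n-m)+(m-1)=n-1$.

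I would then compute the lift $\pi^{-1}\varphi\colon \Pn\dasharrow X$. Its first component is $\varphi$, and its second component records the normal direction $[u_1:\dots:u_m]$, which is the direction of the target coordinates that vanish on $\Gamma$, namely $[x_0x_1:x_1x_2:\dots:x_1x_m]$. The essential point is the cancellation of the common factor $x_1$: this direction equals $[x_0:x_2:x_3:\dots:x_m]$, an expression that is regular and nonconstant along $H_1=\{x_1=0\}$.

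To read off the restriction to $H_1$, I would evaluate along the family $[1:t:x_2:\dots:x_n]$ and let $t\to 0$. Here $\varphi([1:t:x_2:\dots:x_n])=[1:t:tx_2:\dots:tx_m:x_{m+1}:\dots:x_n]$ tends to the point $[1:0:\dots:0:x_{m+1}:\dots:x_n]$ of $\Gamma$, while the second component equals $[x_0:x_2:\dots:x_m]|_{x_0=1}=[1:x_2:\dots:x_m]$ for all $t$. Hence the restriction of $\pi^{-1}\varphi$ to $H_1$ is
$$(x_2,\dots,x_n)\longmapsto \big([1:0:\dots:0:x_{m+1}:\dots:x_n],\,[1:x_2:\dots:x_m]\big)\in E.$$
This lands in $E$ (the incidence relations $x_iu_j=x_ju_i$ hold trivially because $x_1=\dots=x_m=0$ there), and it is birational onto $E$: the point of $\Gamma$ is recovered from $(x_{m+1},\dots,x_n)$ and the fibre direction from $(x_2,\dots,x_m)$, so $\big([1:0:\dots:0:a_{m+1}:\dots:a_n],[b_1:\dots:b_m]\big)\mapsto (b_2/b_1,\dots,b_m/b_1,a_{m+1},\dots,a_n)$ is an explicit inverse and $\dim H_1=\dim E=n-1$.

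The step that requires the most care is showing that $\pi^{-1}\varphi$ genuinely extends across the generic point of $H_1$ rather than merely contracting $H_1$ into $\Gamma$. A priori there is a double obstruction there: $\varphi$ maps $H_1$ into the blow-up centre $\Gamma$, and the naive direction $[x_0x_1:\dots:x_1x_m]$ evaluates to $[0:\dots:0]$. Both disappear after the cancellation of $x_1$ above, which is exactly what makes the limit independent of the chosen approach and identifies the restriction as a well-defined morphism from an open subset of $H_1$ onto a dense open subset of $E$. Once this is established, the dimension count and the explicit inverse complete the proof.
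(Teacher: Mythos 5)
Your proposal is correct and follows essentially the same route as the paper: both use the incidence model of the blow-up of $\Gamma$ inside $\Pn\times\p^{m-1}$, lift $\varphi$ by recording the direction $[x_0x_1:x_1x_2:\dots:x_1x_m]=[x_0:x_2:\dots:x_m]$, and observe that after cancelling the factor $x_1$ this second component stays regular and nonconstant along $H_1$, giving a birational map $H_1\dasharrow E$. The paper stops at writing down the lift and asserting birationality, whereas you additionally spell out the restriction and its explicit inverse; just note that the ``limit as $t\to 0$'' phrasing is really substitution of $x_1=0$ into formulas already regular at the generic point of $H_1$, which is the cleaner way to state it over an arbitrary field.
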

\begin{proof}
The hyperplane $H_1\subset \p^n_\k$ given by $x_1=0$ is contracted by $\varphi$ onto the linear subspace~$\Gamma$. The blow-up $\pi_\Gamma\colon X_\Gamma\to \Pn$ of this subset is the birational morphism given by the projection of
$$\begin{array}{rcl}
X_\Gamma&=& \{([x_0:\dots:x_n],[y_1:\dots:y_m])\in \Pn\times \p^{m-1}_\k \mid x_iy_j=x_jy_i, i,j\in \{1,\dots,m\}\}\end{array}
$$
onto the first factor. The birational map $(\pi_\Gamma)^{-1}\circ \varphi$ is then
$$([x_0:\dots:x_n])\dashmapsto ([x_0:x_1 :x_2\frac{x_1}{x_0}:\dots:x_m \frac{x_1}{x_0}:x_{m+1}:\dots:x_n],[x_0:x_2:\dots:x_m]),$$ 
so its restriction to $H_1$ yields a birational map from $H_1$ to the exceptional divisor $E=(\pi_\Gamma)^{-1}(\Gamma)$.
\end{proof}
\begin{corollary}\label{Coro:XinNotInGnIfnodd}
Let $\k$ be any field and $n\ge 3$ be odd. Then the birational map $$\xi_n \colon [x_0:\dots:x_n]\dashmapsto [x_0:x_1 :x_2\frac{x_1}{x_0}:x_3:\dots:x_n]$$
does not belong to $G_n(\k)$.
\end{corollary}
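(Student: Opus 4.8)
The plan is to recognise $\xi_n$ as a special case of the map treated in Lemma~\ref{Lem:SentOnAnyDimension} and then to exploit the geometric invariant provided by Proposition~\ref{Prop:Discrepancy}. Indeed, setting $m=2$ in Lemma~\ref{Lem:SentOnAnyDimension} produces exactly the map $\xi_n$, since the map of that lemma then reads
$$[x_0:\dots:x_n]\dashmapsto [x_0:x_1:x_2\tfrac{x_1}{x_0}:x_3:\dots:x_n],$$
and the hypothesis $n>m\ge 2$ holds because $n\ge 3$. First I would therefore invoke the lemma directly to conclude that $\xi_n$ sends the hyperplane $H_1\subset\Pn$ given by $x_1=0$ onto the exceptional divisor of the linear subspace $\Gamma\subset\Pn$ given by $x_1=x_2=0$, which has dimension $n-2$.

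Next I would compute the discrepancy of $H_1$ with respect to $\xi_n$. By the remark following the definition of discrepancy, the exceptional divisor $E$ of the blow-up of an irreducible subvariety $\Gamma$ of dimension $d$ has discrepancy $n-d-1$ with respect to that blow-up. Here $d=n-2$, so $E$ has discrepancy $n-(n-2)-1=1$. Since $\xi_n$ sends $H_1$ birationally onto $E$, the discrepancy of $H_1$ with respect to $\xi_n$ equals this number, namely $1$, which is odd.

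Finally, I would derive the contradiction. If $\xi_n$ belonged to $G_n(\k)$, then Proposition~\ref{Prop:Discrepancy}---which is valid over an arbitrary field and for any odd $n\ge 3$---would force the discrepancy of every irreducible hypersurface with respect to $\xi_n$ to be even. This contradicts the value $1$ computed above, so $\xi_n\notin G_n(\k)$. I do not expect any genuine obstacle here: all the substance has already been absorbed into Lemma~\ref{Lem:SentOnAnyDimension} and Proposition~\ref{Prop:Discrepancy}. The only things to verify are the bookkeeping---that the hypotheses of both results are met and that the discrepancy computation is carried out correctly---and it is worth stressing that, unlike the Jacobian criterion of Corollary~\ref{Corollary:JacOdd}, the present argument works in every characteristic, which is precisely why the statement can be made for an arbitrary field $\k$.
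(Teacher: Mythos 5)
Your proposal is correct and follows exactly the paper's own argument: apply Lemma~\ref{Lem:SentOnAnyDimension} with $m=2$ to see that $\xi_n$ sends the hyperplane $x_1=0$ onto the exceptional divisor of the codimension-$2$ subspace $x_1=x_2=0$, note that the resulting discrepancy is $1$, and conclude from Proposition~\ref{Prop:Discrepancy}. The bookkeeping (hypotheses of the lemma, the formula $n-d-1$ for the discrepancy, and the validity in all characteristics) is all handled correctly.
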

\begin{proof}
It follows from Lemma~\ref{Lem:SentOnAnyDimension} that $\xi_n$ sends a hyperplane onto the exceptional divisor of the linear subspace given by $x_1=x_2=0$. The linear subspace having codimension~$2$, the discrepancy obtained is equal to $1$. Thus $\xi_n$ does not belong to $G_n(\k)$, by Proposition~\ref{Prop:Discrepancy}.
\end{proof}

\begin{example}\label{Example:NotSamedegree}
Looking at the geometric description of $\sigma_n$, one can easily produce elements  $g\in G_n(\k)$ which are not symmetrical. We choose for instance $\alpha\in \Aut(\Pn)$ given by
$$ \begin{array}{llll}
\alpha\colon& [x_0:x_1:\dots:x_n]&\mapsto &[x_0:x_1+x_0:x_2+x_1+x_0:x_3:\dots:x_n],\\
 \alpha^{-1}\colon& [x_0:x_1:\dots:x_n]&\mapsto &[x_0:x_1-x_0:x_2-x_1:x_3:\dots:x_n].\end{array}$$
The image of the coordinate points by $\alpha$ and $\alpha^{-1}$ have distinct alignement with respect to the hyperplanes contracted by $\sigma_n$. Indeed, denoting by $p_i$ the point where $x_i$ is the only non-zero value, $\alpha(p_0)=[1:1:1:0:\dots:0]$ has three non-zero coordinates, but this is not the case for $\alpha^{-1}(p_i)$, $i=0,\dots,n$. This implies that the base-points of $g=\sigma_n \alpha \sigma_n$ and $g^{-1}=\sigma_n \alpha^{-1} \sigma_n$ have a different nature. Doing the computation, one gets
\[\begin{array}{llll}
g\colon& [x_0:x_1:\dots:x_n]&\mapsto &[x_0:\frac{x_0x_1}{x_0+x_1}:\frac{x_0x_1x_2}{x_0x_1+x_0x_2+x_1x_2}:x_3:\dots:x_n],\\
 g^{-1}\colon& [x_0:x_1:\dots:x_n]&\mapsto &[x_0:\frac{x_0x_1}{x_0-x_1}:\frac{x_1x_2}{x_1-x_2}:x_3:\dots:x_n].\end{array}\]
If $n=2$, $g$ and $g^{-1}$ are two birational maps of degree $3$, with not the same number of proper base-points ($3$ and $4$ respectively). If $n\ge 3$, we get maps of different degree: $4$ and $3$. This contradicts the expectations of Kantor and Coble on the degree of elements of $G_3(\k)$:

For regular transformations of $3$-space, Kantor claims, in \cite[Page 7, Theorem~IX]{Kantor} "Die Transformationen haben stets in beiden R\"aumen gleich hohe Ordnung." Also one can see \cite[page 2108]{Enc}, "wie die ebenen Cremonaschen Transformationen: eine Transformation und ihre inverse haben ein und diesselbe Ordnung". Similarly, one find in \cite[Page 366 (24)]{CobleII} "For a regular Cremona transformation in $S_k$ the direct and inverse transformation have the same order and ..."
\end{example}
\section{Changing the discrepancies in even dimension and monomial maps}\label{Sec:ChangingDiscr}
As we saw in Proposition~\ref{Prop:Discrepancy}, the discrepancy of a hypersurface which is contracted by an element of $G_{n}$ is always even when $n$ is odd, and the main reason for this is that $\hat\sigma_n$ exchanges the divisors associated with blow-ups of linear subspaces of dimension $k$ with divisors associated with blow-ups of linear subspaces of dimension $n-1-k$. Here, we show how to use this in even dimension to get elements of $G_n(\k)$ that contract divisors on subspaces of codimension $2$, with discrepancy $1$.

\begin{example}\label{Exam:Gizatullin}
We fix $n\ge 2$.
\begin{enumerate}
\item
Let $\alpha_1\in \Aut(\Pn)$ be given by
$$ [x_0:x_1:\dots:x_n]\mapsto [x_0:x_0-x_1:x_2:\dots:x_n].$$
Then, $\alpha_1\sigma_n\alpha_1\sigma_n\alpha_1\in G_n(\k)$ is equal to the quadratic involution 
$$\theta_n\colon [x_0:\dots:x_n]\dashmapsto [x_0:\frac{(x_0)^2}{x_1}:x_2:\dots:x_n].$$
\item
Moreover denoting by $\alpha_2\in \Aut(\Pn)$ the element given by
$$ [x_0:x_1:\dots:x_n]\mapsto [x_0:x_1:x_1-x_2:x_3:\dots:x_n],$$
the map $\theta_n\alpha_2\theta_n\in G_n(\k)$ is equal to the quadratic involution
$$ [x_0:\dots:x_n]\dashmapsto [x_0:x_1:\frac{(x_0)^2}{x_1}-x_2:x_3:\dots:x_n].$$
\end{enumerate}
\end{example}
\begin{remark}
The composition which yields the map $\theta_n$ (but also the map $\alpha_2$) was already constructed, at least for $n=3$, by M.~Gizatullin in \cite[Page 115]{Gizatullin} to find elements of $G_3(\k)$ which contract hyperplanes onto curves (see Section~\ref{Sec:Punctual} for more details). The geometric idea here is that $\sigma_n$ contracts a hyperplane onto a point, which is then moved by $\alpha$ onto a general point of a coordinate line. The point is then blown up and replaced with a hypersurface in the blow-up of the line, which is sent by $\sigma_n$ onto a codimension $2$ subset.
\end{remark}
\begin{lemma}\label{Lem:Tau}
If $n\ge 3$ and $\car(\k)\not=2$, the following map belongs to $G_n(\k)$:
$$\tau\colon [x_0:\dots:x_n]\dashmapsto [x_0:x_1+\frac{x_2x_3}{x_0}:x_2:\dots:x_n]$$
\end{lemma}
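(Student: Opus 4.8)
The plan is to pass to the affine chart $x_0=1$ and set $y_i=x_i/x_0$, so that $\tau$ becomes the elementary automorphism
\[
E\colon (y_1,\dots,y_n)\mapsto (y_1+y_2y_3,\,y_2,\dots,y_n)
\]
of $\A^n_\k$. I would construct $E$ in two stages: first produce, using only a single pair of variables, the ``add a square'' map $y_1\mapsto y_1+y_2^2$ inside $G_n(\k)$ (this stage should work in any characteristic), and then recover the cross term $y_2y_3$ by polarisation. This is exactly where the two hypotheses enter: $n\ge 3$ guarantees that $y_2$ and $y_3$ are distinct coordinates, and $\car(\k)\ne 2$ allows the division by $2$ in the polarisation identity.

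For the building blocks I would invoke Example~\ref{Exam:Gizatullin}. The map $\theta_n$ gives, after conjugation by a permutation of the coordinates, the single–variable inversions $\theta^{(i)}\colon y_i\mapsto 1/y_i$; and the composite $\theta_n\alpha_2\theta_n$ (the map $y_2\mapsto 1/y_1-y_2$) gives, after conjugating by a transposition and composing with the linear sign change $y_1\mapsto -y_1$, the reciprocal shear $y_1\mapsto y_1+1/y_2$. All of these lie in $G_n(\k)$ in every characteristic.

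The heart of the argument is the first stage. I would restrict attention to the action on the single coordinate $y_1$, treating $y_2,\dots,y_n$ as scalars in $K=\k(y_2,\dots,y_n)$, so that the maps above become elements of $\PGL_2(K)$ acting on $y_1$: the inversion $\theta^{(1)}$ is $s=\left(\begin{smallmatrix}0&1\\1&0\end{smallmatrix}\right)$, a linear shear $y_1\mapsto y_1+y_2$ is $E_{12}(y_2)$, and the reciprocal shear is $E_{12}(-1/y_2)$, whence $sE_{12}(-1/y_2)s=E_{21}(-1/y_2)$. Using the Steinberg identity I would form $w(y_2)=E_{12}(y_2)\,E_{21}(-1/y_2)\,E_{12}(y_2)$ and the torus element $h(y_2)=w(y_2)\,w(1)^{-1}=\mathrm{diag}(y_2,y_2^{-1})$; as a self-map of $\A^n_\k$ this is the multiplier $m\colon y_1\mapsto y_2^2\,y_1$ fixing the other coordinates, and by construction $m\in G_n(\k)$. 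Conjugating the linear translation $y_1\mapsto y_1+1$ by $m$ then yields $y_1\mapsto y_1+y_2^2$, and conjugating this by further linear maps (replacing $y_2$ by an arbitrary linear form $L$ and rescaling $y_1$) produces $y_1\mapsto y_1+c\,L(y_2,\dots,y_n)^2$ for every $c\in\k^\ast$, all inside $G_n(\k)$.

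Finally I would polarise: since $\car(\k)\ne 2$ we have $y_2y_3=\tfrac12(y_2+y_3)^2-\tfrac12 y_2^2-\tfrac12 y_3^2$, and the three corresponding ``add a scaled square'' maps all fix $y_2,\dots,y_n$ and merely add a function of those variables to $y_1$; hence they commute, and their product is precisely $E=\tau$, giving $\tau\in G_n(\k)$. I expect the only genuine obstacle to be the first stage: the generators individually produce only reciprocals (coming from $\sigma_n$ through $\theta_n$) and affine-linear combinations (coming from $\Aut(\Pn)$), so manufacturing a true quadratic term is not formal. The Steinberg trick resolves this, and it is essential that the reciprocal shear supplies the lower-triangular generator $E_{21}(-1/y_2)$ with the \emph{non-polynomial} entry $1/y_2$; with only affine-linear entries available, the multiplier $\mathrm{diag}(y_2,y_2^{-1})$, and hence the square, would be unreachable.
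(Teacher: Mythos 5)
Your proof is correct, and its second half coincides with the paper's: both arguments first place the ``add a square'' map $\tau'\colon y_1\mapsto y_1+y_2^2$ in $G_n(\k)$ and then polarise, using $\car(\k)\neq 2$ only at that last step (your three commuting shears adding $\tfrac12(y_2+y_3)^2$, $-\tfrac12 y_2^2$, $-\tfrac12 y_3^2$ are just an unpacked form of the paper's single commutator $(\tau')^{-1}\alpha^{-1}\tau'\alpha$ with $\alpha\colon y_2\mapsto y_2+y_3$, followed by a linear conjugation). Where you genuinely diverge is the first stage. You assert that ``manufacturing a true quadratic term is not formal'' and go through a Steinberg-type computation in $\PGL_2\bigl(\k(y_2,\dots,y_n)\bigr)$, building $w(y_2)=E_{12}(y_2)E_{21}(-1/y_2)E_{12}(y_2)$ and the multiplier $h(y_2)=\mathrm{diag}(y_2,y_2^{-1})$, i.e.\ $y_1\mapsto y_2^2y_1$, and then conjugating a translation. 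This is valid — every factor you use is a permutation/sign conjugate of $\theta_n$, of $\theta_n\alpha_2\theta_n$, or a linear map, all in $G_n(\k)$ in any characteristic — but it is unnecessary: in homogeneous coordinates the ``reciprocal'' $\tfrac{(x_0)^2}{x_1}$ and the ``square'' $\tfrac{(x_1)^2}{x_0}$ are exchanged by the transposition $x_0\leftrightarrow x_1$, so conjugating the map $[x_0:x_1:\tfrac{(x_0)^2}{x_1}-x_2:\cdots]$ of Example~\ref{Exam:Gizatullin} by that transposition already gives $[x_0:x_1:\tfrac{(x_1)^2}{x_0}-x_2:\cdots]$, i.e.\ $y_2\mapsto y_1^2-y_2$ in the chart $x_0=1$; a sign change and a permutation of indices then yield $\tau'$ directly. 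This is the step your affine-chart viewpoint obscures: the reciprocal shear and the quadratic shear are the \emph{same} element of $G_n(\k)$ seen in different charts. What your longer route buys is a self-contained derivation of the monomial multiplier $y_1\mapsto y_2^2y_1$ (the map $\mu$ of Proposition~\ref{Prop:ContainsMonomials}, which the paper constructs separately by a similar juggling of inversions), so nothing is wrong — it is simply a heavier tool than the lemma requires.
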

\begin{remark}
We do not know if $\tau$ belongs to $G_n(\k)$ when $\car(\k)=2$. Calculations tend to indicate that this is not the case, but we do not have a proof.
\end{remark}
\begin{proof}It follows from Example~\ref{Exam:Gizatullin} that 
$$\tau'\colon [x_0:\dots:x_n]\dashmapsto [x_0:x_1+\frac{(x_2)^2}{x_0}:x_2:x_3:\dots:x_n]$$
belongs to $G_n(\k)$. Hence, writing $\alpha=[x_0:\dots:x_n]\mapsto [x_0:x_1:x_2+x_3:x_3:\dots:x_n]$, we get $$(\tau')^{-1}\alpha^{-1}\tau'\alpha\colon [x_0:\dots:x_n]\dashmapsto [x_0:x_1+\frac{x_3(x_3-2x_2)}{x_0}:x_2:x_3:\dots:x_n],$$
which is equal to $\tau$, up to linear automorphisms.
\end{proof}
The main construction of this section is the following example, which we first describe algebraically, before explaining the geometry behind the construction.

\begin{example}\label{ExampleChi}The following construction works for each integer $n\ge 3$ but is more interesting for $n\ge 4$.

We denote by $\theta_n,\tau_1,\tau_2,\alpha\in \Bir(\Pn)$ the following birational involutions
$$\begin{array}{rcl}
\theta_n&=&[x_0:\dots:x_n]\mapsto [x_0:\frac{(x_0)^2}{x_1}:x_2:\dots:x_n],\\
\tau_1&=&[x_0:\dots:x_n]\mapsto [x_0:-x_1+\frac{x_0x_2}{x_3}:x_2:x_3:\dots:x_n],\\
\tau_2&=&[x_0:\dots:x_n]\mapsto [x_0:x_1:-x_2+\frac{x_1x_3}{x_0}:x_3:\dots:x_n],\\
\alpha&=&[x_0:\dots:x_n]\mapsto [x_0:x_1:x_3-x_2:x_3:\dots:x_n].\end{array}$$
It follows from Example~\ref{Exam:Gizatullin} and Lemma~\ref{Lem:Tau} that the four maps belong to $G_n(\k)$, if $\car(\k)\not=2$. Then, 
$$\chi_0=\sigma_n \alpha\sigma_n \tau_2\sigma_n\tau_1\theta_n\in G_n(\k)$$ is given by
$$\chi_0 \colon [x_0:\dots:x_n]\dashmapsto [\frac{1}{x_0}:\frac{x_1x_3}{x_0(x_1x_2-x_0x_3)}:\frac{x_1(x_2)^2+x_0(x_3)^2-x_0x_2x_3}{x_0(x_3)^3}:\frac{1}{x_3}:\dots:\frac{1}{x_n}].$$
It sends the hyperplane $H_0\subset \Pn$ given by $x_0=0$ onto the exceptional divisor of the plane $P\subset \Pn$ given by $x_3=\dots=x_n=0$. Hence the discrepancy is $n-3$.

Let us explain the geometric idea of this construction.

\begin{enumerate}
\item
 The map $\theta_n$ sends the hyperplane $H_0$ onto a codimension $1$ subset of the exceptional divisor of the linear subspace $R\subset \p^n_\k$  given by $x_0=x_1=0$. 
 \item
 The map $\sigma_n$ exchanges the exceptional divisor of $R$ with the exceptional divisor of the line $L\subset \p^n_\k$ given by $x_2=\dots=x_n=0$; these divisors are naturally birational to $R\times L$, so that the projections are restrictions of the blow-ups.
 
 \item
 The image of $H_0$ by $\theta_n$ corresponds then to a divisor of bidegree $(0,1)$ in $R\times L$; the projection to $R$ is surjective but the projection to $L$ has only one point in its image.
\item
The map $\tau_1$ fixes $R$; its action on the divisor $R\times L$ sends the divisor of bidegree $(0,1)$ onto a divisor of bidegree $(1,1)$.
\item
After applying $\sigma_n$, we apply $\tau_2$, which fixes $L$. It sends the divisor of bidegree $(1,1)$ onto a divisor of bidegree $(1,0)$ in $R\times L$.
\item
After applying $\sigma_n$ again, the divisor of bidegree $(1,0)$ corresponds to a general hypersurface of $R$ ($x_3=x_2$), which we move to a special one ($x_2=0$) by $\alpha$.
\item The last application of $\sigma_n$ allows to go from $x_0=x_1=x_2=0$ to the plane given by $x_3=\dots=x_n=0$.
\end{enumerate}
\end{example}
\begin{remark}
In the above construction, we could replace the maps $\tau_i$ with maps fixing $L$ and $R$ respectively, and acting on the exceptional divisor in the same way. If $\car(\k)=2$, it does not seem to be possible to obtain such elements in $G_n(\k)$.
\end{remark}\bigskip

Starting with the map  $\chi_0\in G_n(\k)$ of
Example~\ref{ExampleChi}, the following construction provides monomial elements of $G_n(\k)$.

\begin{example}\label{Examplechi2}
We use the map $\chi_0\in G_n(\k)$ of 
Example~\ref{ExampleChi} and the following two elements of $G_n(\k)$.
$$\begin{array}{rcl}
\tau&=&[x_0:\dots:x_n]\dashmapsto [x_0:x_1+\frac{x_0x_3}{x_2}:x_2:x_3:\dots:x_n],\\
\alpha&=&[x_0:\dots:x_n]\dashmapsto [x_0:x_1:x_3-x_2:x_3:\dots:x_n].\end{array}$$
Then $\chi_1=\sigma_n\tau \alpha \chi_0\tau\in G_n(\k)$ is the following monomial map

$$\chi_1 \colon [x_0:\dots:x_n]\dashmapsto [x_0:\frac{x_0x_2}{x_3}:-\frac{x_0(x_3)^3}{x_1(x_2)^2}:x_3:\dots:x_n].$$
\end{example}

Then we can obtain the following.
\begin{proposition}\label{Prop:ContainsMonomials}
Assume that $n\ge 3$ and $\car(\k)\not=2$. Then, the following birational maps are elements of $G_n(\k)$:
$$\begin{array}{rcl}
\mu\colon [x_0:\dots:x_n]&\dashmapsto &[x_0:x_1 :x_2(\frac{x_1}{x_0})^2:x_3:\dots:x_n],\\
\nu \colon [x_0:\dots:x_n]&\dashmapsto &[x_0:x_1 :x_2\frac{x_1}{x_0}:x_3\frac{x_1}{x_0}:x_4:\dots:x_n].
\end{array}$$
Moreover, the following birational map belongs to $G_n(\k)$ if and only if $n$ is even:
$$\xi_n \colon [x_0:\dots:x_n]\dashmapsto [x_0:x_1 :x_2\frac{x_1}{x_0}:x_3:\dots:x_n].$$
\end{proposition}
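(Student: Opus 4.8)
The plan is to use that $\mu$, $\nu$ and $\xi_n$ are all monomial and to work inside the group $(\k^*)^n\rtimes\GL(n,\Z)$ of monomial maps, translating membership in $G_n(\k)$ into a statement about matrices in $\GL(n,\Z)$. Under the isomorphism recalled in the introduction, $\mu$, $\nu$ and $\xi_n$ correspond respectively to $I+2E_{21}$, $I+E_{21}+E_{31}$ and $I+E_{21}$, where $E_{ij}$ denotes the matrix with a single $1$ in position $(i,j)$. Now $G_n(\k)$ already contains many monomial maps: the diagonal automorphisms give the full torus $(\k^*)^n$; the permutations of $x_1,\dots,x_n$ give $\Sym_n$, which together with the sign changes $\mathrm{diag}(1,\dots,-1,\dots,1)$ (conjugates of $\theta_n$) realize the hyperoctahedral group $B_n=\{\pm1\}^n\rtimes\Sym_n$ (with $\sigma_n$ giving $-I$); the remaining transpositions $x_0\leftrightarrow x_i$ give matrices $C_i$, used below for the even case; and $\chi_1$ contributes the matrix $M$ whose block on coordinates $1,2,3$ is $\left(\begin{smallmatrix}0&1&-1\\-1&-2&3\\0&0&1\end{smallmatrix}\right)$ and which is the identity elsewhere. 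As the whole torus is available, the sign in $\chi_1$ and any scalar appearing in a composition is harmless, so it suffices to prove that $I+2E_{21}$ and $I+E_{21}+E_{31}$ lie in $\langle B_n,M\rangle$ for every $n\ge3$, while $I+E_{21}$ lies in it if and only if $n$ is even. One checks that $B_n$ and $M$ all have every column sum odd, and that this property is multiplicative; hence $\langle B_n,M\rangle\subseteq\GL(n,\Z)_{\mathrm{odd}}$, which is satisfied by $\mu$ and $\nu$ but violated in column $1$ by $\xi_n$.

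First I would reduce the two positive statements to producing the single matrix $\mu=I+2E_{21}$. Composing $\chi_1$ with $\theta_n$ and the transposition $x_1\leftrightarrow x_2$ turns $M$ into the upper\nobreakdash-unitriangular $U=I+2E_{12}-3E_{13}-E_{23}$, supported on coordinates $1,2,3$; conjugating $U$ by $x_1\leftrightarrow x_3$ yields the lower\nobreakdash-unitriangular $L=I-E_{21}-3E_{31}+2E_{32}$. Both $U$ and $L$ lie in $\langle B_n,M\rangle$ and are localised to three coordinates. A direct computation then deduces $\nu$ from $\mu$: conjugating $\mu$ by the $3$\nobreakdash-cycle $(1\,2\,3)$ gives $I+2E_{32}$, and $L^{-1}(I+2E_{32})=I+E_{21}+E_{31}=\nu$. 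Thus $\mu,\nu\in\langle B_n,M\rangle$ follow once $\mu\in\langle B_n,M\rangle$ is established.

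The membership $\mu\in\langle B_n,M\rangle$ is where the real work lies. Multiplying $L$ by its sign\nobreakdash-conjugate $\mathrm{diag}(1,1,-1)\,L\,\mathrm{diag}(1,1,-1)$ already gives $I-2E_{21}-2E_{31}$, so $I+2E_{21}+2E_{31}\in\langle B_n,M\rangle$; the remaining task is to strip off the spurious $E_{31}$. This cannot be done inside the column\nobreakdash-$1$ transvections alone, whose available coefficients obey a parity restriction, so one must bring in the upper generator $U$: commutators of an upper and a lower element sharing a coordinate generate the corresponding $\mathrm{SL}_2$, and combining these with $B_n$ one isolates $\mu=I+2E_{21}$. \emph{This finite computation in $\mathrm{SL}_3(\Z)$, extracting one clean transvection out of the matrix of $\chi_1$, is the main obstacle}; the parity invariant above guarantees it is not obstructed, since $\mu$ lies in $\GL(n,\Z)_{\mathrm{odd}}$.

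Finally I would treat $\xi_n=I+E_{21}$. For $n$ odd it does not belong to $G_n(\k)$ by Corollary~\ref{Coro:XinNotInGnIfnodd} (consistently, it fails the column\nobreakdash-sum parity), so only the even case needs a construction. From $\nu\in G_n(\k)$ and conjugation by the permutations fixing $x_0$ and $x_1$ one obtains every $I+E_{i1}+E_{j1}$, hence every $I+\sum_{i\ge2}c_iE_{i1}$ with $\sum_{i\ge2}c_i$ even. On the other hand, $\mathrm{diag}(-1,1,\dots,1)\cdot(x_0\leftrightarrow x_1)$ is the monomial map with matrix $I-\sum_{i\ge2}E_{i1}$, whose coefficient sum $-(n-1)$ is odd exactly when $n$ is even. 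Multiplying this odd\nobreakdash-weight element by a suitable even\nobreakdash-weight product of $\nu$\nobreakdash-conjugates then isolates $I+E_{21}$, which is possible precisely when $n$ is even, giving $\xi_n\in G_n(\k)$ in that case. Throughout, the hypothesis $\car(\k)\neq2$ is needed because $\theta_n$, $\tau$, and therefore $\chi_1$, were only shown to lie in $G_n(\k)$ under that assumption.
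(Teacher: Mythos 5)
Your strategy of encoding everything in $\GL(n,\Z)$ and multiplying explicit matrices coming from $\theta_n$, the permutations and $\chi_1$ is sound, and a good part of it checks out: the matrix of $\chi_1$, the passage to $U$ and $L$, the identity $L^{-1}(I+2E_{32})=I+E_{21}+E_{31}$ reducing $\nu$ to $\mu$, and the treatment of $\xi_n$ (odd case by Corollary~\ref{Coro:XinNotInGnIfnodd}, even case by cancelling $I-\sum_{i\ge2}E_{i1}$ against an even-weight product of $\nu$-conjugates) are all correct and close to what the paper does. But there is a genuine gap exactly where you flag ``the main obstacle'': the membership $\mu=I+2E_{21}\in G_n(\k)$ is never established. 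You reach $I+2E_{21}+2E_{31}$ and then assert that commutators of $U$ and $L$ let you strip off the extra $E_{31}$, with no computation; $U$ and $L$ are not clean elementary transvections, so the appeal to ``an upper and a lower element generate $\mathrm{SL}_2$'' does not apply as stated. The remark that the parity invariant ``guarantees it is not obstructed'' is backwards logic: lying in $\GL(n,\Z)_{\mathrm{odd}}$ is a \emph{necessary} condition for membership in $\langle B_n,M\rangle$, not a sufficient one, and indeed a closer look at the lower-unitriangular elements reachable from sign-conjugates of $L$ shows a further mod-$4$ constraint that rules out the most direct routes. Since in your chain both $\nu$ and the even case of $\xi_n$ depend on $\mu$, the whole proposition is left unproved. (A smaller slip: you write that $I+E_{21}$ lies in $\langle B_n,M\rangle$ iff $n$ is even, but by your own parity invariant it never does; your later argument correctly brings in $x_0\leftrightarrow x_1$ to escape $\GL(n,\Z)_{\mathrm{odd}}$.)

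The restriction to $\langle B_n,M\rangle$ is a self-inflicted handicap: the transpositions $x_0\leftrightarrow x_i$, which you set aside ``for the even case,'' are exactly what make $\mu$ easy, and $\chi_1$ is not needed for $\mu$ at all. The paper writes $\mu=\varphi_2\varphi_3\varphi_2\varphi_1$, where each $\varphi_i$ is $\theta_n$ conjugated by a coordinate permutation of $\p^n$; for instance $\varphi_2\colon[x_0:\dots:x_n]\dashmapsto[\frac{(x_1)^2}{x_0}:x_1:\dots:x_n]$ is $\theta_n$ conjugated by $x_0\leftrightarrow x_1$ and has matrix with entry $-2$ in every row of the first column. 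A one-line multiplication of these four matrices gives $I+2E_{21}$. With $\mu$ obtained this way, your derivations of $\nu$ (via $L$, hence via $\chi_1$) and of $\xi_n$ go through and coincide in substance with the paper's argument.
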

\begin{proof}
$a)$ It follows from Example~\ref{Exam:Gizatullin} that the birational map
$$\theta_n\colon [x_0:\dots:x_n]\dashmapsto [x_0:\frac{(x_0)^2}{x_1}:x_2:\dots:x_n]=[x_1:x_0:x_2\frac{x_1}{x_0}:\dots:x_n \frac{x_1}{x_0}]$$
belongs to $G_n(\k)$. Hence, the maps
 $$\begin{array}{rrcl}
 \varphi_1\colon& [x_0:\dots:x_n]&\dashmapsto &[x_0:x_1:\frac{(x_1)^2}{x_2}:x_3:\dots:x_n],\\
\vphantom{\Big)} \varphi_2\colon& [x_0:\dots:x_n]&\dashmapsto& [\frac{(x_1)^2}{x_0}:x_1:\dots:x_n],\\
\vphantom{\Big)} \varphi_3\colon& [x_0:\dots:x_n]&\dashmapsto& [x_0:x_1:\frac{(x_0)^2}{x_2}:x_3:\dots:x_n],\end{array}$$
all belong to $G_n(\k)$, which implies that $\mu=\varphi_2\varphi_3\varphi_2\varphi_1\in G_n(\k)$. 

$b)$ It follows from Example~\ref{Examplechi2} that the birational map
$$\chi \colon [x_0:\dots:x_n]\dashmapsto [x_0:\frac{x_0x_2}{x_3}:\frac{x_0(x_3)^3}{x_1(x_2)^2}:x_3:\dots:x_n]$$
belongs to $G_n(\k)$, and as before the map
$$\varphi_4 \colon [x_0:\dots:x_n]\dashmapsto [x_0:\frac{(x_3)^2}{x_1}:x_2:x_3:\dots:x_n]$$
also belongs to $G_n(\k)$. Hence, the map
$$\mu\chi\varphi_4\colon [x_0:\dots:x_n]\dashmapsto [x_0:x_2\frac{x_0}{x_3}:x_1\frac{x_0}{x_3}:x_3:\dots:x_n]$$
belongs to $G_n(\k)$. This implies that $\nu\in G_n(\k)$.

$c)$ Multiplying $\nu$ with conjugates by permutations, we obtain that
$$\psi_k\colon [x_0:\dots:x_n]\dashmapsto [x_0:x_1 :x_2\frac{x_1}{x_0}:x_3\frac{x_1}{x_0}:x_4\frac{x_1}{x_0}:\dots:x_{2k-1}\frac{x_1}{x_0}:x_{2k}:\dots:x_n]$$
belongs to $G_n(\k)$ for each $k$ with $3\le 2k-1\le n$. 

If $n=2k-1$, the map $\xi_n$ does not belong to $G_n(\k)$, by Corollary~\ref{Coro:XinNotInGnIfnodd}.

If $n=2k$, the map $\varphi_5(\psi_k)^{-1}\in G_n(\k)$  is equal to
$$[x_0:\dots:x_n]\dashmapsto [x_0:x_1 :x_2:\dots:x_{n-1}:x_n\frac{x_1}{x_0}]$$
where
$$
\varphi_5 \colon [x_0:\dots:x_n]\dashmapsto [x_0:x_1 :x_2\frac{x_1}{x_0}:x_3\frac{x_1}{x_0}:\dots:x_n\frac{x_1}{x_0}].$$
Since $\varphi_5$ is equal to $\theta_n$ up to automorphisms, the map $\xi_n$ belongs to $G_n(\k)$.
\end{proof}

As we already explained in the introduction, the subgroup of $\Bir(\Pn)$ that consists of monomial transformations with coefficient one is isomorphic to $\GL(n,\mathbb{Z})$: each matrix
$$\left(\begin{array}{llll}
a_{11}& \dots & a_{1n}\\
\vdots & \ddots & \vdots \\
a_{n1} & \dots & a_{nn}\end{array}\right)$$
corresponds to the birational map of $\A^n_\k$ which is given by 
$$(x_1,\dots,x_n)\dasharrow (x_1^{a_{11}}\cdots x_n^{a_{1n}},\dots,x_1^{a_{n1}}\cdots x_n^{a_{nn}})$$
and which extends to the birational map of $\Pn$ given by
$$[x_0:\dots:x_n]\dasharrow [1:(\frac{x_1}{x_0})^{a_{11}}\cdots (\frac{x_n}{x_0})^{a_{1n}}:\dots:(\frac{x_1}{x_0})^{a_{n1}}\cdots (\frac{x_n}{x_0})^{a_{nn}}].$$ 
\begin{lemma}\label{Lemm:GLNodd}
Let $n\ge 1$, and denote by $\GL(n,\Z)_{\mathrm{odd}}\subset \GL(n,\mathbb{Z})$ the subgroup of matrices such that each column has an odd number of odd entries. Then the following hold.

\begin{enumerate}[$(1)$]
\item
The group $\GL(n,\Z)_{\mathrm{odd}}$ is a maximal subgroup of $\GL(n,\mathbb{Z})$, and has index $2^{n}-1$.
\item
If $n\ge 3$, then $\GL(n,\Z)_{\mathrm{odd}}$ is generated by the group of matrix permutations and  the following matrices:
$$M_\theta=\left(\begin{array}{llllll}
-1& \\
 & 1 &  \\
&  & \ddots & \\
&&  & 1 & \end{array}\right), \ M_\mu=\left(\begin{array}{llllll}
1&  &  \\
2 & 1 &  \\
&  & \ddots & \\
&&  & 1 & \end{array}\right),\ M_\nu=\left(\begin{array}{llllll}
1&  &  \\
1 & 1 &  \\
1 &  & 1\\
&&& \ddots & \\
&&  & &1  \end{array}\right).$$
\end{enumerate}
\end{lemma}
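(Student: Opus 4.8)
The plan is to prove the two assertions of Lemma~\ref{Lemm:GLNodd} separately, beginning with part $(1)$. First I would observe that the map $\phi\colon\GL(n,\Z)\to(\Z/2\Z)^n$ that sends a matrix $A$ to the vector whose $j$-th entry is the parity of the number of odd entries in the $j$-th column (equivalently, the parity of the sum of the entries of that column) is the reduction $A\mapsto \bar A$ modulo $2$, followed by the map $\GL(n,\Z/2\Z)\to(\Z/2\Z)^n$ recording the column sums of $\bar A$. I claim this composite is a group homomorphism onto the \emph{nonzero} vectors, once we restrict to $\GL$: indeed, the column-sum vector of a product $\bar A\bar B$ over $\Z/2\Z$ can be computed as $\mathbf 1^{\mathsf T}\bar A\bar B$ where $\mathbf 1$ is the all-ones row vector, and since $\mathbf 1^{\mathsf T}$ is being multiplied on the left, $\phi(AB)$ depends on $\phi(A)$ and $B$ in a way that I would package as an action making $\GL(n,\Z)_{\mathrm{odd}}=\ker\phi$ precisely the matrices whose column sums are all odd. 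The key point is that over $\Z/2\Z$ an invertible matrix cannot have a zero column, but it \emph{can} have a column of even sum; what must be checked is that $\GL(n,\Z)_{\mathrm{odd}}$ is genuinely a subgroup, i.e.\ closed under multiplication and inverse, which follows once $\phi$ is recognized as a homomorphism to an appropriate quotient. The index count $2^n-1$ then comes from identifying the image of $\phi$ with the $2^n-1$ admissible parity vectors (all vectors except those forcing a singular reduction); maximality follows because the image group has no proper nontrivial subgroups of the relevant form, so $\GL(n,\Z)_{\mathrm{odd}}$ sits just below the full group.

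The cleanest route, and the one I would actually write out, avoids worrying about whether the target of $\phi$ is itself a group. Instead I would reduce modulo $2$ to get a surjection $\GL(n,\Z)\twoheadrightarrow\GL(n,\Z/2\Z)$ (surjectivity is standard, e.g.\ by lifting elementary matrices), and inside $\GL(n,\Z/2\Z)$ consider the subgroup $K$ of matrices all of whose column sums equal $1$. Then $\GL(n,\Z)_{\mathrm{odd}}$ is the preimage of $K$, hence automatically a subgroup, and its index in $\GL(n,\Z)$ equals the index of $K$ in $\GL(n,\Z/2\Z)$. To compute $[\GL(n,\Z/2\Z):K]$ I would use the homomorphism $\GL(n,\Z/2\Z)\to(\Z/2\Z)^n$, $\bar A\mapsto \mathbf 1^{\mathsf T}\bar A$, whose image is exactly the set of nonzero vectors in $(\Z/2\Z)^n$ \emph{together with the zero vector's exclusion}: since $\bar A$ is invertible, $\mathbf 1^{\mathsf T}\bar A\neq 0$, and conversely every nonzero row vector is achieved. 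However, this map is only a homomorphism if we give $(\Z/2\Z)^n$ the structure where $\GL(n,\Z/2\Z)$ acts; the honest statement is that $\bar A\mapsto\mathbf 1^{\mathsf T}\bar A$ is a crossed homomorphism, so counting fibers directly is safer than invoking a quotient. The fibers all have the same cardinality by transitivity of $\GL(n,\Z/2\Z)$ on nonzero vectors via right multiplication, giving $2^n-1$ equal-sized fibers, whence $[\GL(n,\Z/2\Z):K]=2^n-1$. Maximality of $K$ (and hence of $\GL(n,\Z)_{\mathrm{odd}}$) would follow from showing $K$ is the stabilizer of a point in the transitive action of $\GL(n,\Z/2\Z)$ on the $2^n-1$ nonzero vectors, so that no subgroup strictly between $K$ and the whole group can exist unless the action is imprimitive — I would check primitivity, or else argue directly that any matrix not in $\GL(n,\Z)_{\mathrm{odd}}$ together with $\GL(n,\Z)_{\mathrm{odd}}$ generates all of $\GL(n,\Z)$ by hitting every parity vector.

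For part $(2)$, I would first verify that each of the three displayed matrices lies in $\GL(n,\Z)_{\mathrm{odd}}$: $M_\theta$ has columns with $1,1,\dots,1$ odd entries (the first column has the single entry $-1$, all others the diagonal $1$), $M_\mu$ has first column $(1,2,0,\dots)^{\mathsf T}$ with one odd entry and the rest standard, and $M_\nu$ has first column $(1,1,1,0,\dots)^{\mathsf T}$ with three odd entries — all odd counts, and the permutation matrices visibly have exactly one odd entry per column. Since $\GL(n,\Z)_{\mathrm{odd}}$ is a subgroup by part $(1)$, the group they generate is contained in it. The substantive direction is generation: I would take as known that $\GL(n,\Z)$ is generated by permutation matrices, the diagonal sign change $M_\theta$ (or $\mathrm{diag}(-1,1,\dots,1)$), and a single elementary transvection $E$ adding one row to another. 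The elementary transvection $E$ itself is \emph{not} in $\GL(n,\Z)_{\mathrm{odd}}$, so the strategy is to express the subgroup $\GL(n,\Z)_{\mathrm{odd}}$ in terms of products $E_{ij}E_{kl}$ of two transvections (which \emph{are} in $\GL(n,\Z)_{\mathrm{odd}}$ since doubling up restores odd column sums), realizing $M_\mu$ as essentially $E_{21}^2$ (adding twice row $1$ to row $2$) and $M_\nu$ as $E_{21}E_{31}$ (adding row $1$ to rows $2$ and $3$). I expect the main obstacle to be exactly this: showing that permutations, $M_\theta$, $M_\mu$, and $M_\nu$ suffice to produce all of $\GL(n,\Z)_{\mathrm{odd}}$, which amounts to a Gaussian-elimination / generation argument internal to the kernel. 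Concretely I would argue that conjugating $M_\mu$ and $M_\nu$ by permutations yields all transvections of the form $E_{ij}^2$ and all products $E_{ij}E_{ik}$, and that these together with $M_\theta$ generate the full congruence-type subgroup $\ker\phi$; the cleanest formalization is to show that modulo the subgroup $\langle\text{perms},M_\theta,M_\mu,M_\nu\rangle$ one can clear any matrix of $\GL(n,\Z)_{\mathrm{odd}}$ to the identity by operations that each preserve membership in $\GL(n,\Z)_{\mathrm{odd}}$, using that an even column sum is forbidden so single transvections never need be applied in isolation.
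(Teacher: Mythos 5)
Your treatment of part $(1)$ is, after the hesitations of the first paragraph, essentially the paper's argument: reduce mod $2$, view $\GL(n,\Z)_{\mathrm{odd}}$ as the stabilizer of the all-ones vector under the (right) action of $\GL(n,\mathbb{F}_2)$ on $(\mathbb{F}_2)^n\setminus\{0\}$, get the index $2^n-1$ from orbit--stabilizer, and get maximality from primitivity. The one ingredient you announce but never supply is the primitivity itself (equivalently, for your ``direct'' variant, the transitivity of $\GL(n,\Z)_{\mathrm{odd}}$ on the nonzero vectors other than $(1,\dots,1)$). The paper's one-line observation closing this is that the action is \emph{doubly} transitive, because any two distinct nonzero vectors of $(\mathbb{F}_2)^n$ are linearly independent; doubly transitive implies primitive, and stabilizers in primitive actions are maximal. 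Without that remark part $(1)$ is not finished, but the fix is short and your framework is the right one.

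Part $(2)$ is where the proposal has a genuine gap: you correctly check that the three matrices and the permutations lie in $\GL(n,\Z)_{\mathrm{odd}}$, correctly identify $M_\mu$ and $M_\nu$ as the double transvections $E_{21}^2$ and $E_{21}E_{31}$, and then state that the remaining task is ``a Gaussian-elimination / generation argument internal to the kernel'' --- but that remaining task \emph{is} the lemma, and you do not carry it out. The difficulty is not cosmetic. Since a single transvection leaves $\GL(n,\Z)_{\mathrm{odd}}$, every elimination step must be a paired move, and one has to show that the pairing can always be arranged so that the reduction still terminates. The paper does this concretely: it first reduces to the subgroup $H'$ of matrices with last column ${}^t(0,\dots,0,1)$ by an explicit algorithm on the last column (order the entries, then apply a permutation-conjugate of $M_\nu^{-1}$, which replaces $(a_{n-2},a_{n-1},a_n)$ by $(a_{n-2}-a_{n-1},a_{n-1},a_n-a_{n-1})$), and the termination proof needs a parity observation that your sketch never touches: when $a_{n-2}=0$ one has $a_{n-1}\neq a_n$ \emph{because the column sum is odd}, so the move still strictly decreases $(\sum_i\lvert a_i\rvert,\max_i\lvert a_i\rvert)$ lexicographically. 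A column of the form ${}^t(0,\dots,0,a,a)$ would be a fixed point of the double-subtraction move, and only the oddness hypothesis rules it out. After that, the paper handles $H'$ by induction on $n$ via the surjection $H'\to\GL(n-1,\Z)$, whose kernel $\cong\Z^{n-1}$ is generated by permutation-conjugates of $M_\mu$. To complete your proof you would need to supply this algorithm (or an equivalent one) together with its termination argument; ``single transvections never need be applied in isolation'' is the conclusion, not a reason.
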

\begin{proof}
$(1)$ There is a canonical surjective group homomorphism $ \GL(n,\mathbb{Z})\to \GL(n,\mathbb{F}_2)$, which induces an action $\GL(n,\mathbb{Z})$ on the vector space $V=(\mathbb{F}_2)^n$ by right multiplication. As two distinct elements of $V\setminus \{0\}$ are linearly independent, the action on $V\setminus \{0\}$ is doubly transitive.
Moreover, it follows from the definition of $\GL(n,\Z)_{\mathrm{odd}}$ that it is the isotropy group of the point $(1,\dots,1)$, so $\GL(n,\Z)_{\mathrm{odd}}$ has index $2^n-1$ in $\mathrm{GL}(n,\mathbb{Z})$, and acts transitively on $V\setminus \{(0,\dots,0),(1,\dots,1)\}$. It remains to see that this  implies that $\GL(n,\Z)_{\mathrm{odd}}$ is maximal in $\GL(n,\Z)$. An elementary way is to observe that for each $M,N\in \GL(n,\Z)\setminus \GL(n,\Z)_{\mathrm{odd}}$, there exist $A,B\in \GL(n,\Z)_{\mathrm{odd}}$ such that $AM=NB$: choose $B$ that sends $(1,\dots,1)N$ onto $( 1,\dots,1)M$ and observe that $A=NBM^{-1}$ fixes $(1,\dots,1)$. This can also be explained by more general classical results: groups doubly transitive are primitive \cite[Page 149, Theorem 1.9]{Huppert} and stabilisers in primitive groups are maximal \cite[Page 147, Theorem 1.4]{Huppert}.

$(2)$ We assume $n\ge 3$ and denote by $H\subset \GL(n,\Z)_{\mathrm{odd}}$ the group generated by matrix permutations and by $M_\theta,M_\mu,M_\nu$. Our aim is to prove that $\GL(n,\Z)_{\mathrm{odd}}\subset H$.

 Denote by $H'$ the group of elements of $\GL(n,\Z)_{\mathrm{odd}}$ having ${}^t (0,\dots,0,1)$ as the last column. By sending an element of $H'$ onto the submatrix consisting of the first $n-1$ lines and columns, we obtain a group homomorphism $\rho\colon H'\to  \GL(n-1,\mathbb{Z})$. Note that $\rho$ is surjective: we can complete any element of $\GL(n-1,\mathbb{Z})$ by adding entries on the last line so that the corresponding element belongs to $H'\subset \GL(n,\Z)_{\mathrm{odd}}$. The kernel of $\rho$ is then isomorphic to $\mathbb{Z}^{n-1}$,  generated by the matrices
$$\left(\begin{array}{llllll}
1&  & & &  \\
 & 1 &  \\
&  & \ddots & \\
&&  & 1 &\\
2&&&&1 \end{array}\right),\left(\begin{array}{llllll}
1&  & &   \\
 & 1 &  &&\\
&  & \ddots & \\
&&  & 1 &\\
&2&&&1 \end{array}\right),\dots, \left(\begin{array}{llllll}
1&  & &   \\
 & 1 &  &&\\
&  & \ddots & \\
&&  & 1 &\\
&&&2&1 \end{array}\right),$$
and is thus contained in $H$. Moreover, the matrices
$$M_\theta=\left(\begin{array}{llllll}
-1& \\
 & 1 &  \\
&  & \ddots & \\
&&  & 1 & \end{array}\right), \ M'_\nu=\left(\begin{array}{llllll}
1&  &  &\\
1 & 1 &  \\
&  & \ddots & \\
1&&  & 1 & \end{array}\right)$$
belong to $H'\cap H$ and are sent by $\rho$ onto elements that generate $\GL(n,\mathbb{Z})$, together with the image of the permutation matrices of $H'$. It follows that $H'\subset H$.

It remains to take an arbitrary element $g\in \GL(n,\Z)_{\mathrm{odd}}$ and to see that some element of $Hg$ has last column equal to ${}^t (0,\dots,0,1)$; this will imply that $g\in H$. To do this, we replace $g$ with $hg$, where $h\in H$, in the following way.
\begin{enumerate}
\item[Step $1$:]
We multiply $g$ on the left with multiples of permutations and $M_\theta$, so that the last column becomes ${}^t (a_1,\dots,a_n)$ with $ 0\le a_1\le \dots \le a_n$.
\item[Step $2$:]
We multiply on the left with the conjugate of $M_\nu^{-1}$ by a permutation matrix, replace ${}^t (a_1,\dots,a_n)$ with ${}^t (a_1,\dots,a_{n-3},a_{n-2}-a_{n-1},a_{n-1},a_n-a_{n-1})$, and then go back to Step $1$.
\end{enumerate}
It remains to observe that this algorithm always ends with a last column equal to ${}^t (0,\dots,0,1)$. Note that Step $2$ decreases the value of $\sum_i \lvert a_i \rvert$ by $2a_{n-2}$. If $a_{n-2}=0$, then $a_{n-1}\not=a_n$ because the sum is odd, and applying Step $2$ decreases the value of $\max_{i}\lvert a_i \rvert$ (except in the case where $a_{n-1}=0$, which implies that $a_n=1$). Hence, we always decrease the value of the pair $(\sum_i \lvert a_i \rvert, \max_i \lvert a_i \rvert)$ (in a lexicographic order), until we reach ${}^t (0,\dots,0,1)$.
\end{proof}
We can now give the proofs of Theorems~\ref{Thm:Monomial} and \ref{Theorem:Contraction}, which follow from the above results.
\begin{proof}[Proof of Theorem~$\ref{Thm:Monomial}$]
For any $n$ and any $\k$, the group $G_n(\k)$ contains the diagonal automorphisms of $\Pn$, so the question reduces to studying the intersection of $G_n(\k)$ with the group $\GL(n,\Z)$ of monomial transformations with coefficient one.

If $n=2$, then $G_2(\k)$ contains 
$\theta_2\colon [x_0:x_1:x_2]\dashmapsto [x_0:\frac{(x_0)^2}{x_1}:x_2]=[1:\frac{x_0}{x_1}:\frac{x_2}{x_0}]$ (Example~\ref{Exam:Gizatullin}), and thus also the map
$[x_0:x_1:x_2]\dashmapsto [\frac{(x_0)^2}{x_2}:x_1:x_0]=[1:\frac{x_1}{x_0}\cdot \frac{x_2}{x_0}:\frac{x_2}{x_0}]$. The two correspond to 
$$\left(\begin{array}{llllll}
-1&0 \\
0 & 1\end{array}\right), \ \left(\begin{array}{ll}
1& 1 \\
0 &1 \end{array}\right)\in \GL(2,\Z)$$
and generate, together with permutations, the group $\GL(2,\Z)$.

If $n\ge 4$ is even and $\car(\k)\not=2$, the map
$$\xi_n \colon [x_0:\dots:x_n]\dashmapsto [x_0:x_1 :x_2\frac{x_1}{x_0}:x_3:\dots:x_n]$$
belongs to $G_n(\k)$ (Proposition~\ref{Prop:ContainsMonomials}). This map, together with permutations and $\theta_n$, generates $\mathrm{GL}(n,\mathbb{Z})$.

If $n$ is odd, then $G_n(\k)$ does not contain $\xi_n$  (Corollary~\ref{Coro:XinNotInGnIfnodd}), so $\GL(n,\Z)$ is not contained in $G_n(\k)$.

If $n$ is odd and $\car(\k)\not=2$, then $G_n(\k)$ contains the following maps (see Example~\ref{Exam:Gizatullin} and Proposition~\ref{Prop:ContainsMonomials}):
$$\begin{array}{rcl}
\theta_n \colon [x_0:\dots:x_n]&\dashmapsto &[x_0x_1:(x_0)^2:x_1x_2:\dots:x_1x_n],\\
\mu\colon [x_0:\dots:x_n]&\dashmapsto &[x_0:x_1 :x_2(\frac{x_1}{x_0})^2:x_3:\dots:x_n],\\
\nu \colon [x_0:\dots:x_n]&\dashmapsto &[x_0:x_1 :x_2\frac{x_1}{x_0}:x_3\frac{x_1}{x_0}:x_4:\dots:x_n].
\end{array}$$
This shows, together with Lemma~\ref{Lemm:GLNodd}, that $G_n(\k)$ contains the subgroup $\GL(n,\Z)_{\mathrm{odd}}$ of $\GL(n,\mathbb{Z})$ because $M_\theta,M_\mu,M_\nu\in \GL(n,\Z)_{\mathrm{odd}}$ correspond to $\theta_n,\mu,\nu\in G_n(\k)$ respectively. Moreover, this group being maximal in $\GL(n,\mathbb{Z})$, we have $$G_n(\k)\cap \GL(n,\mathbb{Z})=\GL(n,\Z)_{\mathrm{odd}}.$$
\end{proof}

\begin{proof}[Proof of Theorem~$\ref{Theorem:Contraction}$]
$(1)$ If $H\subset \Pn$ is a irreducible hypersurface which is sent by an element $g\in G_n(\k)$ onto the exceptional divisor of an irreducible closed subset $\Gamma\subset \Pn$, the discrepancy of $H$ with respect to $g$ is $n-d-1$, where $d$ is the dimension of $\Gamma$. If $n$ is odd, the discrepancy is even by Proposition~\ref{Prop:Discrepancy}, so $d$ is even.

$(2)$  If $n> m\ge 0$, $nm$ is even and $\car(\k)\not=2$, we want to find an irreducible closed linear subset $\Gamma\subset \Pn$ and an element $g\in G_n(\k)$ that sends a hyperplane onto the exceptional divisor of $\Gamma$. We consider the map
$$\varphi\colon [x_0:\dots:x_n]\dashmapsto [x_0:x_1 :x_2\frac{x_1}{x_0}:\dots:x_{n-m} \frac{x_1}{x_0}:x_{n-m+1}:\dots:x_n]$$
which sends the hyperplane $H_1\subset \Pn$ given by $x_1=0$ onto the exceptional divisor of the linear subspace $\Gamma\subset \Pn$ given by $x_1=x_2=\dots=x_{n-m}=0$ and of dimension $m$.

If $n$ is even, then $\varphi$ belongs to $G_n(\k)$, like all monomomial birational maps of $\Pn$ (Theorem~\ref{Thm:Monomial}). If $n$ is odd, then $m$ is even, so $\varphi$ belongs to $\GL(n,\Z)_{\mathrm{odd}}$, which is contained in $G_n(\k)$ by Theorem~\ref{Thm:Monomial}.
\end{proof}

\begin{example}\label{ExampleDolgachev}
At the end of the introduction of \cite{Gizatullin}, M.~Gizatullin gives an example of quadratic transformation due to I.~Dolgachev, "considered as an analog for
$\p^5$ of the standard quadratic transformation" $\sigma_2$. The transformation is given by
$$\sigma'\colon [x_0:\dots:x_5]\dashmapsto [x_1x_2:x_0x_2:x_0x_1:x_0x_3:x_1x_4:x_2x_5].$$
It sends the three hyperplanes $x_0=0$, $x_1=0$ and $x_2=0$ onto the exceptional divisors of three planes. In affine coordinates, we obtain
$$(x_1,\dots,x_5)\dashmapsto \left(\frac{1}{x_1},\frac{1}{x_2},\frac{x_3}{x_1x_2},\frac{x_4}{x_2},\frac{x_5}{x_1}\right)$$
which corresponds to the matrix
\[\begin{pmatrix*}[r]
-1&0&0&0&0\\
0&-1&0&0&0\\
-1&-1&1&0&0\\
0&-1&0&1&0\\
-1&0&0&0&1\end{pmatrix*}\in \GL(5,\mathbb{Z})_{\mathrm{odd}}\]
so the map belongs to $G_5(\k)$ if $\car(\k)\not=2$, by Theorem~\ref{Thm:Monomial}.
\end{example}
\begin{remark}
Seeing $\p^5$ as $\p(\k[x_0,\dots,x_2]_2)$, where $\k[x_0,\dots,x_n]_d$ denotes the vector space of homogeneous polynomials of degree $d$, we can choose homogenous coordinates $z_{i,j}=x_ix_j$ on it. The map of Example~\ref{ExampleDolgachev} becomes then
$$\begin{array}{cccc}
\sigma'\colon& [z_{00}:z_{11}:z_{22}:z_{12}:z_{02}:z_{01}]&\dashmapsto &[z_{11}z_{22}:z_{00}z_{22}:z_{00}z_{11}:z_{12}z_{00}:z_{02}z_{11}:z_{01}z_{22}]\\
&&=&[\frac{1}{z_{00}}:\frac{1}{z_{11}}:\frac{1}{z_{22}}:\frac{z_{12}}{z_{11}z_{22}}:\frac{z_{02}}{z_{00}z_{22}}:\frac{z_{01}}{z_{00}z_{11}}]\end{array}$$
One obtains a map $\Aut(\p^2)\cup \{\sigma_2\}\to G_5(\k)$ that sends $\sigma_2$ onto $\sigma'$, and a linear automorphism $\alpha$ onto the linear automorphism corresponding to the action of $\alpha$ on $\p(\k[x_0,\dots,x_2]_2)$. This map naturally extends to a group homomorphism $G_2(\k)\to G_5(\k)$, as it was observed by \cite{Gizatullin}.

One can then generalise this construction to any dimension, and send $\sigma_n$ onto the involution $\sigma'\in \Bir(\p(\k[x_0,\dots,x_n]_2)=\Bir(\p^N)$ given by
$$\sigma'\colon [z_{00}:z_{11}:\dots:z_{nn}:\dots:z_{ij}:\dots]\dashmapsto [\frac{1}{z_{00}}:\frac{1}{z_{11}}:\dots:\frac{1}{z_{nn}}:\dots:\frac{z_{ij}}{z_{ii}z_{jj}}:\dots].$$
This element is a monomial transformation of $\p^N$, that belongs to $G_N(\k)$ if $n\not\equiv 3\pmod{4}$.

The natural question that arises is to know if this extends to a group homomorphism $G_n(\k)\to \Bir(\p^N)$ (or $G_n(\k)\to G_N(\k)$ for $n\not\equiv 3\pmod{4}$). One needs for this to know the relations of the group $G_n(\k)=\langle \Aut(\p^n),\sigma_n\rangle$.
\end{remark}
\section{Linear injections $\Bir(\Pn)\to \Bir(\p^{n+1}_\k)$}\label{Sec:Linear}
There is a canonical injection $$\iota_n\colon \Bir(\A^n_\k)\to \Bir(\A^{n+1}_\k),$$ which corresponds to acting on the $n$ first coordinates of $\A^{n+1}_\k$ and to fix the last one: if $\varphi\in \Bir(\A^n_\k)$, then $\iota_n(\varphi)\in \Bir(\A^{n+1}_\k)$ is given by 
$$(\iota_n(\varphi))(x_1,\dots,x_{n+1})=(\varphi(x_1,\dots,x_n),x_{n+1}).$$

Choosing birational maps $\A^n_\k\to \p^{n}_\k$ and $\A^{n+1}_\k\to \p^{n+1}_\k$, we can thus obtain injections $\Bir(\Pn)\to \Bir(\p^{n+1}_\k)$. The simplest one is when we use linear open embeddings $\A^i_\k\to \p^i_\k$, i.e.\ open embeddings such that the pull-back of a general hyperplane is a hyperplane.

\begin{definition}
An injective group homomorphism $\iota\colon \Bir(\Pn)\to \Bir(\p^{n+1}_\k)$ is said to be a \emph{linear embedding} if there exist linear open embeddings $\tau_n\colon \A^n_\k\to \p^{n}_\k$ and $\tau_{n+1}\colon \A^{n+1}_\k\to \p^{n+1}_\k$ such that
$$\iota(\varphi)=\tau_{n+1} \iota_n((\tau_n)^{-1} \varphi \tau_n)(\tau_{n+1})^{-1}$$
for each $\varphi\in \Bir(\Pn)$ $($where $\iota_n\colon \Bir(\A^n_\k)\to \Bir(\A^{n+1}_\k)$ is the canonical embedding as above$)$.
\end{definition}

\begin{proposition}\label{Prop:GnGnp}
Let $\iota\colon \Bir(\Pn)\to \Bir(\p^{n+1})$ be a linear embedding.

$(1)$ If $n$ is even, then $\iota(G_n(\k))\not\subset G_{n+1}(\k)$.

$(2)$ If $n$ is odd and $\car(\k)\not=2$, then $\iota(G_n(\k))\subset G_{n+1}(\k)$.\end{proposition}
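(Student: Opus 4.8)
The plan is to reduce to a single convenient linear embedding and then to follow the images under $\iota$ of the two kinds of generators of $G_n(\k)=\langle\sigma_n,\Aut(\Pn)\rangle$. First I would reduce to the standard embedding. If $\iota$ and $\iota'$ are built from pairs $(\tau_n,\tau_{n+1})$ and $(\tau_n',\tau_{n+1}')$ of linear open embeddings, then $\alpha=\tau_n'\tau_n^{-1}\in\Aut(\Pn)$ and $\beta=\tau_{n+1}'(\tau_{n+1})^{-1}\in\Aut(\p^{n+1})$ are automorphisms, and a direct computation gives $\iota'(\varphi)=\beta\,\iota(\alpha^{-1}\varphi\alpha)\,\beta^{-1}$ for every $\varphi$. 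Because $\alpha\in\Aut(\Pn)\subset G_n(\k)$ and $\beta\in\Aut(\p^{n+1})\subset G_{n+1}(\k)$, this yields $\iota'(G_n(\k))=\beta\,\iota(G_n(\k))\,\beta^{-1}$, so the property $\iota(G_n(\k))\subset G_{n+1}(\k)$ (and hence also its negation) is independent of the chosen linear embedding. Thus I may assume $\tau_i(y)=[1:y_1:\dots:y_i]$, so that in affine coordinates $\iota(\varphi)$ acts as $\tau_n^{-1}\varphi\tau_n$ on $y_1,\dots,y_n$ and fixes $y_{n+1}$.

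For part $(2)$, with this embedding $\iota(\sigma_n)$ is the monomial map $(y_1,\dots,y_{n+1})\dashmapsto(y_1^{-1},\dots,y_n^{-1},y_{n+1})$, i.e.\ the element $\mathrm{diag}(-1,\dots,-1,1)\in\GL(n+1,\Z)$. Next I would use that $\Aut(\Pn)$ is generated by the affine automorphisms of the chart $x_0\neq0$ together with the coordinate permutations $\Sym_{n+1}$. The image of an affine automorphism is again affine, hence lies in $\Aut(\p^{n+1})\subset G_{n+1}(\k)$; and since every coordinate permutation becomes a (coefficient-one) monomial map in the affine chart, $\iota(\Sym_{n+1})$ consists of monomial transformations of $\p^{n+1}$. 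As $n$ is odd, $n+1$ is even, so by Theorem~\ref{Thm:Monomial}$(1)$ (this is where $\car(\k)\neq2$ enters) every monomial transformation of $\p^{n+1}$ belongs to $G_{n+1}(\k)$; in particular $\iota(\sigma_n)$ and $\iota(\Sym_{n+1})$ do. Since $\iota$ is a group homomorphism, $\iota(G_n(\k))=\langle\iota(\sigma_n),\iota(\Aut(\Pn))\rangle\subset G_{n+1}(\k)$.

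For part $(1)$ the strategy is to exhibit $g\in G_n(\k)$ for which $\iota(g)$ has an odd discrepancy: Proposition~\ref{Prop:Discrepancy}, applied in the odd dimension $n+1$, then forces $\iota(g)\notin G_{n+1}(\k)$. The naive choice $g=\sigma_n$ does not work, since $\iota(\sigma_n)$ only contracts hyperplanes onto points with the even discrepancy $n$; the reason is that $\sigma_n$ sends its coordinate hyperplanes onto exceptional divisors of points lying at infinity, which $\iota$ does not treat as vertical. I would instead take $g=\beta\sigma_n$, where $\beta$ transposes $x_0$ and $x_1$. By Proposition~\ref{resolution} the map $\sigma_n$ sends the hyperplane $\{x_1=0\}$ onto the exceptional divisor of $p_1=[0:1:0:\dots:0]$, and $\beta(p_1)=[1:0:\dots:0]$ now lies in the affine chart; in affine coordinates $g$ reads $(y_1,\dots,y_n)\dashmapsto(y_1,\,y_1/y_2,\dots,y_1/y_n)$ and sends $\{y_1=0\}$ onto the exceptional divisor of the origin. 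Consequently $\iota(g)$, which acts as this map together with the identity on $y_{n+1}$, sends the hyperplane $\{x_1=0\}$ onto the exceptional divisor of the line $\{x_1=\dots=x_n=0\}\subset\p^{n+1}$ (verified exactly as in Lemma~\ref{Lem:SentOnAnyDimension}), with discrepancy $(n+1)-1-1=n-1$. As $n$ is even this is odd, so $\iota(g)\notin G_{n+1}(\k)$ while $g\in G_n(\k)$; note that this argument needs no hypothesis on $\car(\k)$.

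The two genuinely delicate points, which I expect to be the main obstacles, are the following. First, $\iota$ does \emph{not} map $\Aut(\Pn)$ into $\Aut(\p^{n+1})$: only the affine part stays linear, while the remaining generators become nonlinear (monomial) maps, which is precisely why Theorem~\ref{Thm:Monomial} is indispensable in part $(2)$ and why the parity hypothesis on $n$ is exactly what makes the argument succeed. Second, in part $(1)$ one cannot use $\sigma_n$ directly but must first move the contracted image into the affine chart, so that the contraction becomes vertical for $\iota$ and its discrepancy is genuinely preserved; verifying that $\iota(g)$ sends the hyperplane \emph{onto} the exceptional divisor of the line (and not merely contracts it) is the step requiring the most care, and is handled by the local product description as in Lemma~\ref{Lem:SentOnAnyDimension}.
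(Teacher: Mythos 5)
Your proof is correct, and its skeleton (reduce to the standard affine charts, then track the images of the generators of $G_n(\k)$) is the same as the paper's; the differences are in which generators and which witness you use. For part $(2)$ the paper does not decompose $\Aut(\Pn)$ as $\langle A_0,\Sym_{n+1}\rangle$; instead it observes that $A_0$ is a \emph{maximal} subgroup of $\Aut(\Pn)$ (since it acts transitively on the hyperplanes other than $H_0$), so that after checking $\iota(\sigma_n)\in G_{n+1}(\k)$ and $\iota(A_0)\subset\Aut(\p^{n+1}_\k)$ the whole question reduces to a single element $\nu\notin A_0$, taken to be the transposition of $x_0$ and $x_1$; this maximality trick lets the paper treat $(1)$ and $(2)$ simultaneously by computing $\iota(\nu)=[x_1:x_0:x_2:\dots:x_n:x_{n+1}\tfrac{x_1}{x_0}]$ once and deciding its membership via Theorem~\ref{Thm:Monomial} (for $n$ odd) or Corollary~\ref{Coro:XinNotInGnIfnodd} (for $n$ even). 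Your route for $(2)$ buys a slightly more self-contained generation argument at the cost of also invoking Theorem~\ref{Thm:Monomial} for $\iota(\sigma_n)$, where the paper has the characteristic-free identity that $\iota(\sigma_n)\sigma_{n+1}$ equals $\theta_{n+1}$ up to permutations (harmless here, since $(2)$ assumes $\car(\k)\neq2$ anyway). For part $(1)$ your witness $g=\beta\sigma_n$ works and your discrepancy computation (contraction onto the exceptional divisor of a line, discrepancy $n-1$, odd) is sound, but note that the paper's witness is the \emph{linear} map $\nu$ itself, which is the sharper statement: already $\iota(\Aut(\Pn))\not\subset G_{n+1}(\k)$. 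Both witnesses are ultimately detected by the same parity obstruction, since Corollary~\ref{Coro:XinNotInGnIfnodd} is itself a consequence of Proposition~\ref{Prop:Discrepancy}.
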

\begin{proof}
Let us denote by $\tau_n\colon \A^n_\k\to \p^{n}_\k$ and $\tau_{n+1}\colon \A^{n+1}_\k\to \p^{n+1}_\k$ the linear embdeddings associated to $\iota$.

If we replace $\tau_n$ with another linear embedding $\tau_n' \colon \A^n_\k\to \p^{n}_\k$, we do not change the group $\iota(G_n(\k))$ since $(\tau_n')^{-1}\circ \tau_n$ is an element of $\Aut(\Pn)\subset G_n(\k)$. Similarly, replacing $\tau_n$ with another linear embedding only replaces $\iota(G_n(\k))$ with a conjugate by an element of $\Aut(\p^{n+1}_\k)\subset G_{n+1}(\k)$. We can then assume that $\tau_n,\tau_{n+1}$ are given by
$$\begin{array}{rcl}
\tau_n(x_1,\dots,x_n)&=&[1:x_1:\dots:x_n],\\
\tau_{n+1}(x_1,\dots,x_{n+1})&=&[1:x_1:\dots:x_n:x_{n+1}].\\
\end{array}$$
With this choice, we can see $\iota(\sigma_n)$ locally as 
$$\iota(\sigma_n)\colon [1:x_1:\dots:x_{n+1}]\dashmapsto [1:\frac{1}{x_1}:\dots : \frac{1}{x_n}:x_{n+1}].$$
Hence, $\iota(\sigma_n)\sigma_{n+1}$ is the map 
$$[1:x_1:\dots:x_{n+1}]\dashmapsto [1:x_1:\dots : x_n:\frac{1}{x_{n+1}}],$$ 
which is equal to the map $\theta_{n+1}\in G_{n+1}(\k)$ of Example~\ref{Exam:Gizatullin} up to permutations. 
This shows that $\iota(\sigma_n)\in G_{n+1}(\k)$ for each $n$, so it remains to decide when $\iota(\Aut(\Pn))\subset G_{n+1}(\k)$.

Let us denote by $A_0\subset \Aut(\Pn)$ the subgroup of elements that preserve the hyperplane $H_0\subset \Pn$ given by $x_0=0$. The group $(\tau_n)^{-1} A_0\tau_n\subset \Bir(\A^n_\k)$ is then equal to the group $\mathrm{Aff}_n$ of affine automorphisms of $\A^n_\k$ (generated by $\GL(n,\k)$ and the translations). Since $\iota(\mathrm{Aff}_n)$ is contained in $\Aff_{n+1}$, we obtain that $\iota(A_0)\subset \Aut(\p^{n+1})\subset G_{n+1}(\k)$.

Because $A_0$ acts transitively on the set of hyperplanes of $\Pn$ distinct from $H_0$, it is a maximal subgroup of $\Aut(\Pn)$. Hence, $\iota(G_n(\k))$ is contained in $G_{n+1}(\k)$ if and only if $\iota(\nu)\in G_{n+1}(\k)$ for one element $\nu\in \Aut(\Pn)\setminus A_0$.

We choose $\nu\in \Aut(\Pn)$ to be the involution
$$\nu\colon [x_0:\dots:x_n]\mapsto [x_1:x_0:x_2:\dots:x_n]$$
and obtain 
$$\begin{array}{rrccl}
(\tau_n)^{-1}\nu\tau_n&\colon& (x_1,\dots,x_n)&\dashmapsto& \left(\frac{1}{x_1},\frac{x_2}{x_1},\dots,\frac{x_n}{x_1}\right)\vspace{0.2cm}\\

\iota_n((\tau_n)^{-1}\nu\tau_n)&\colon& (x_1,\dots,x_{n+1})&\dashmapsto& \left(\frac{1}{x_1},\frac{x_2}{x_1},\dots,\frac{x_n}{x_1},x_{n+1}\right)\vspace{0.2cm}\\

\iota(\nu)&\colon& [x_0:x_1:\dots:x_{n+1}]&\dashmapsto& [x_1:x_0:x_2:\dots:x_n:x_{n+1}\frac{x_1}{x_0}].
\end{array}$$

If $n$ is odd and $\car(\k)\not=2$, then $\iota(\nu)$ belongs to $G_{n+1}(\k)$ by Theorem~\ref{Thm:Monomial}.

If $n$ is even, then $\iota(\nu)$ does not belong to $G_{n+1}(\k)$ by Corollary~\ref{Coro:XinNotInGnIfnodd}. 
\end{proof}

\section{Automorphisms of $\A^n_\k$}\label{Sec:AutAn}
Any linear embedding of $\A^n_\k$ into $\p^n_\k$ yields an injective group homomorphism $$\Aut(\A^n_\k)\to \Bir(\p^n_\k).$$ Changing the linear embedding only changes the image by conjugation by an element of $\Aut(\p^n_\k)$. Hence, whether an element of $\Aut(\A^n_\k)$ belongs to $G_n(\k)$ or not does not depend of the linear embedding. In the sequel, we will always talk about extension of automorphisms of $\A^n_\k$ to $\p^n_\k$ via linear embeddings.\\
 
By Jung - van der Kulk's Theorem \cite{Jun42}, \cite{VdK53}, the group $\Aut(\A^2_\k)$ is generated by $\GL(2,\k)$ and by all elementary automorphisms of the form $(x,y)\mapsto (x+p(y),y)$ (where $p\in \k[y]$ is a polynomial). 

This is no longer true for $\Aut(\A^3_\k)$ if $\car(\k)=0$ \cite{ShUm} and still open for $\Aut(\A^n_\k)$ if $n\ge 4$ or if $n=3$ and $\car(\k)>0$. The group $\Taut(\A^n_\k)\subset \Aut(\A^n_\k)$ generated by $\GL(n,\k)$ and elementary automorphisms is then called \emph{Tame group of automorphisms}.

If $\car(\k)=0$ and $n\ge 3$, H. Derksen showed that the group $\Taut(\A^n_\k)$ is generated by affine automorphisms and by $(x_1,\dots,x_n)\mapsto (x_1+(x_2)^2,x_2,\dots,x_n)$ (see \cite[Theorem 5.2.1, Page 95]{vdE}). Hence, we easily obtain that $\Taut(\A^n_\k)$ is contained in $G_n(\k)$ in this case. In $\car(\k)>0$, the result of H. Derksen does not work, as the following  lemma shows.
\begin{lemma}
Let $n\ge 2$ and let $\k$ be a field of characteristic $p>0$.

$(a)$ Let us write $R_p=\k[(x_1)^p,\dots,(x_n)^p]\subset \k[x_1,\dots,x_n]$.
 Then, the following sets of endomorphisms of $\A^n_\k$ are closed under composition:
$$\begin{array}{lllll}
\{&f\colon (x_1,\dots,x_n)\mapsto (f_1,\dots,f_n)&\mid& f_i \in x_1 R_p+\dots + x_n R_p+R_p\mbox{ for }i=1,\dots,n&\},\\
\{&f\colon (x_1,\dots,x_n)\mapsto (f_1,\dots,f_n)&\mid& f_i \in x_1\k+\dots + x_n \k +R_p\mbox{ for }i=1,\dots,n&\}.\end{array}$$

$(b)$ The group generated by affine transformations and all maps of the form $$(x_1,\dots,x_n)\mapsto (x_1+x_2 x_3^{pa_3}\dots x_{n}^{pa_n},x_2,\dots,x_n),\ a_3,\dots,a_n\in \mathbb{N}$$
 is a proper subgroup of $\Taut(\A^n_\k)$.

$(c)$ The group generated by affine transformations and all maps of the form $$(x_1,\dots,x_n)\mapsto (x_1+(x_2)^{mp},x_2,\dots,x_n),\ m\in \mathbb{N}$$
 is a proper subgroup of $\Taut(\A^n_\k)$.
\end{lemma}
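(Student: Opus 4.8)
The plan is to deduce parts (b) and (c) from part (a), so I would prove (a) first. Write $M_1=x_1R_p+\dots+x_nR_p+R_p$ and $M_2=\k x_1+\dots+\k x_n+R_p$ for the two prescribed sets of allowed components, and let $S_1,S_2\subseteq\mathrm{End}(\A^n_\k)$ be the corresponding sets of endomorphisms. The engine of the whole lemma is the elementary observation that in characteristic $p$ one has $h^p\in R_p$ for every $h\in\k[x_1,\dots,x_n]$, since $(\sum_\alpha c_\alpha x^\alpha)^p=\sum_\alpha c_\alpha^p x^{p\alpha}$; consequently substitution of arbitrary polynomials into an element of $R_p$ stays in $R_p$, because if $r=\sum_\alpha c_\alpha(x^\alpha)^p\in R_p$ and $g_1,\dots,g_n$ are any polynomials then $r(g_1,\dots,g_n)=\sum_\alpha c_\alpha (g^\alpha)^p\in R_p$.

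With this in hand part (a) is a short computation, using that $M_1$ is an $R_p$-module and $M_2$ is a $\k$-vector space. Given $f,g$ with components in $M_1$, I would write $f_i=r_0+\sum_j r_j x_j$ with $r_\bullet\in R_p$, so that $f_i(g)=r_0(g)+\sum_j r_j(g)\,g_j$; here $r_0(g),r_j(g)\in R_p$ by the substitution remark, each $g_j\in M_1$, and since $M_1$ is an $R_p$-module every term lies in $M_1$. For $M_2$ I would write $f_i=\sum_j c_j x_j+r$ with $c_j\in\k$, $r\in R_p$, whence $f_i(g)=\sum_j c_j g_j+r(g)\in M_2$, because $\sum_j c_j g_j$ is a $\k$-combination of the $g_j\in M_2$ and $r(g)\in R_p\subseteq M_2$.

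For part (b) the strategy is to show that the generating set lies in $S_1$ and to invoke (a). Each affine map has components of the form (linear form)$+$(constant), hence in $M_1$, and the first component $x_1+x_2x_3^{pa_3}\cdots x_n^{pa_n}$ of the given generators lies in $x_1R_p+x_2R_p$ because $x_3^{pa_3}\cdots x_n^{pa_n}\in R_p$; the inverses have the same shape. As $S_1$ is closed under composition by (a), the whole group $G$ lies in $S_1$. To finish I exhibit a tame automorphism outside $S_1$: a monomial $x^\beta$ lies in $M_1$ exactly when its exponent vector is $\equiv 0\pmod p$ in every coordinate, or $\equiv 1$ in one coordinate and $\equiv 0$ in the others. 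For $n\ge 3$ the elementary (hence tame) automorphism $(x_1+x_2x_3,x_2,\dots,x_n)$ then works, since $x_2x_3$ has two exponents $\equiv 1\pmod p$ and so $x_1+x_2x_3\notin M_1$; for $n=2$ the listed generators are all affine, so $G=\Aff_2\subsetneq\Taut(\A^2_\k)$.

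Part (c) runs identically but with $S_2$ in place of $S_1$: the generators $(x_1+x_2^{mp},x_2,\dots,x_n)$ and their inverses have components in $M_2$ since $x_2^{mp}\in R_p$, affine maps lie in $S_2$, and so by (a) the group is contained in $S_2$. A monomial $x^\beta$ lies in $M_2$ precisely when it is one of the variables $x_1,\dots,x_n$ or has all exponents $\equiv 0\pmod p$. Hence the uniform witness I would use, valid for every $n\ge 2$ and every $p$, is the tame automorphism $(x_1+x_2^{p+1},x_2,\dots,x_n)$: the monomial $x_2^{p+1}$ is not a single variable and its exponent $p+1$ is not divisible by $p$, so $x_2^{p+1}\notin M_2$ and the map is outside $S_2\supseteq G$. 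Note that this same monomial does lie in $M_1$ (as $x_2\cdot x_2^p$), which is exactly why (c) must be separated from $G$ using the second set rather than the first. The only real subtlety in the whole lemma is the bookkeeping of exponents modulo $p$ — checking that the generators produce only monomials of the permitted residue pattern while the chosen tame witnesses do not — together with the small separate treatment of $n=2$ in part (b).
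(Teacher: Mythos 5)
Your proposal is correct and follows essentially the same route as the paper: part (a) rests on the observation that substituting arbitrary polynomials into an element of $R_p$ lands back in $R_p$ (via Frobenius), and parts (b) and (c) follow by showing the generators lie in the respective composition-closed sets. The paper states (b) and (c) "directly follow from (a)" without further detail, so your explicit tame witnesses ($x_1+x_2x_3$ for (b) with the separate affine observation for $n=2$, and $x_1+x_2^{p+1}$ for (c)) are a welcome completion of the argument rather than a deviation from it.
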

\begin{remark}
If $\k=\mathbb{F}_2$, Assertion $(c)$ can also be proven by looking at the bijections induced by the automorphisms on the $\k$-points of $\A^n_\k$, as observed in \cite{MaubachWillems}.
\end{remark}
\begin{proof}
Assertion $(a)$ is given by the fact that if $g(x_1,\dots,x_n)\in R_p$, then $g(f_1,\dots,f_n)\in R_p$, for each $f_1,\dots,f_n\in \k[x_1,\dots,x_n]$.
Assertions $(b)$ and $(c)$ directly follow from $(a)$.
\end{proof}

The question of finding a finite set of elements that generate, together with affine automorphisms, the group $\Taut(\A^n_\k)$ is still open when $\car(\k)>0$ (see \cite{MaubachWillems}).

We will show all tame automorphisms belong to $G_n(\k)$, when $\car(\k)\not=2$. This shows in particular that we only need affine automorphisms and two elements of $\Bir(\A^n_\k)$ (Corollary~\ref{Coro:Tame2Elements}).
\begin{lemma}\label{Lemm:MonomialTame}
Let $n\ge 2$, let $\k$ be such that $\car(\k)\not=2$ and let $a_2,\dots,a_n\in \mathbb{Z}$. 

If $n a_2$ is even, the element of $\Bir(\A^n_\k)$ given by 
$$(x_1,\dots,x_n)\mapsto (x_1+x_2^{a_2}\dots x_n^{a_n},x_2,\dots,x_n)$$
extends to an element of $G_n(\k)$, via any linear embedding $\A^n_\k\to \p^n_\k$.
\end{lemma}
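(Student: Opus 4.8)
The plan is to place the map inside the abelian group $U\subset\Bir(\A^n_\k)$ of shears $x_1\mapsto x_1+f$ with $f\in\k(x_2,\dots,x_n)$, and to show that the monomial $m=x_2^{a_2}\cdots x_n^{a_n}$ lies in $W:=G_n(\k)\cap U$. Because the diagonal automorphisms of $\Pn$ lie in $G_n(\k)$ and rescale each monomial in $U$ independently, $W$ is stable under this torus, hence is spanned as a $\k$-vector space by the monomials it contains; so it is enough to exhibit $m$ as one such monomial. In addition $W$ is stable under conjugation by the elements of $G_n(\k)$ that fix $x_1$ and act only on $x_2,\dots,x_n$: the affine automorphisms (all of which lie in $\Aut(\Pn)$) and the monomial automorphisms, which by Theorem~\ref{Thm:Monomial} realise $\GL(n-1,\Z)_{\mathrm{odd}}$ when $n$ is odd and all of $\GL(n-1,\Z)$ when $n$ is even. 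Conjugating $x_1\mapsto x_1+f$ by such a map replaces $f$ by its image under the corresponding substitution in $x_2,\dots,x_n$, so $W$ is closed under all these substitutions; combined with the vector space structure this is the engine of the proof.

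First I would build the quadratic core. By Example~\ref{Exam:Gizatullin} the shear $x_1\mapsto x_1+x_2^2$ belongs to $G_n(\k)$ (this is where $\car(\k)\neq2$ first enters). Conjugating by the affine maps $x_2\mapsto x_2+c_3x_3+\dots+c_nx_n+c$ and polarising, which is legitimate since $\car(\k)\neq2$, I get every polynomial of degree at most $2$ in $x_2,\dots,x_n$ into $W$; in particular all $x_i^2$ and all $x_ix_j$ lie in $W$.

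Next I would reach the general monomial by two moves that stay in $W$. The first is substitution by a matrix of $\GL(n-1,\Z)_{\mathrm{odd}}$ (resp.\ $\GL(n-1,\Z)$ for $n$ even): applied to $x_2^2$ it yields the square of any monomial whose exponent vector is primitive with odd coordinate-sum, and the hypothesis that $na_2$ is even is precisely what guarantees that the ``even part'' of the exponent vector of $m$ is produced this way. The second move, which is characteristic-free, adjusts a single exponent: if a monomial in $W$ contains a variable $x_j$ to the first power, then conjugating by the linear map $x_j\mapsto x_j+x_{j'}$ (in $\Aut(\Pn)\subset G_n(\k)$) and subtracting the original shear leaves in $W$ the monomial in which the exponent of $x_j$ has been moved onto $x_{j'}$; here the relevant coefficient is $\binom{1}{1}=1$, so no characteristic is excluded. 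Dually, conjugating by a translation $x_j\mapsto x_j+\lambda$ and subtracting lowers a chosen exponent. Using these two moves together with the permutations of $x_2,\dots,x_n$, I would transport the quadratic core to the target exponent vector $(a_2,\dots,a_n)$ one coordinate at a time, by induction on $\sum_i a_i$.

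The hard part will be the arithmetic of this last step: one must check that the $\GL(n-1,\Z)_{\mathrm{odd}}$-orbits (which preserve both the greatest common divisor and the parity of the coordinate-sum of an exponent vector) together with the degree-changing moves (which break both invariants) really do reach every exponent vector permitted by $na_2$ even. The delicate point in positive characteristic is to carry out every exponent increase through a variable occurring to the first power, so that the binomial coefficients produced equal $1$ and the isolated monomial genuinely survives the subtraction; tracking this forces a careful choice of the intermediate monomials, and is the reason the construction needs $\car(\k)\neq2$ (for the quadratic base and the polarisation) but nothing more. The Laurent case of negative $a_i$ needs no separate argument, since the monomial substitutions already introduce negative exponents. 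I expect this scheme to prove somewhat more than the stated hypothesis, but $na_2$ even is exactly what makes the quadratic core directly available and is all that the applications require.
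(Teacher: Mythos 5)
There is a genuine gap: the decisive step of your argument is precisely the one you defer. Everything before the last paragraph only sets up a calculus of moves (torus rescaling, affine and monomial substitutions in $x_2,\dots,x_n$, exponent transfers through a variable occurring to the first power), and you then acknowledge that ``the hard part will be the arithmetic of this last step,'' namely that these moves actually reach every exponent vector $(a_2,\dots,a_n)$ with $na_2$ even. That reachability statement \emph{is} the lemma, so as written this is a plan rather than a proof. Moreover two of the auxiliary claims the plan rests on are wrong or unjustified. First, conjugating $x_1\mapsto x_1+x_2^2$ by the monomial map of a matrix $A\in\GL(n-1,\Z)_{\mathrm{odd}}$ produces the square of the monomial whose exponent vector is the first \emph{row} of $A$; the parity constraint defining $\GL(n-1,\Z)_{\mathrm{odd}}$ is on \emph{columns}, and the attainable first rows are the primitive vectors not congruent to $(1,\dots,1)$ mod $2$ --- not the vectors ``with odd coordinate-sum.'' This matters: for $n=3$ the vector $(3,1)$ is $\equiv(1,1)$ mod $2$, so your sketched route to $x_2^4$ via intermediate monomials of this type breaks down and a different path must be found. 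Second, ``$W$ is spanned by the monomials it contains'' via the torus action needs distinct monomials to have distinct characters, which fails over small finite fields; and the exponent-adjusting moves $x_j\mapsto x_j+x_{j'}$, $x_j\mapsto x_j+\lambda$ do not produce monomials when applied to a variable carrying a negative exponent, so the Laurent case is not ``no separate argument.''

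The idea you are missing is to move $x_1$ itself by a monomial substitution instead of keeping it fixed. The paper's proof is a single conjugation: let $l\colon(x_1,\dots,x_n)\mapsto(x_1+1,x_2,\dots,x_n)$ and let $\varphi$ be the monomial map $(x_1,\dots,x_n)\dashmapsto(x_1x_2^{-a_2}\cdots x_n^{-a_n},\,x_2x_3^{a_3}\cdots x_n^{a_n},\,x_3,\dots,x_n)$. Its matrix has every column of odd sum exactly when $a_2$ is even, hence lies in $\GL(n,\Z)_{\mathrm{odd}}$ in that case, and lies in $\GL(n,\Z)$ in any case (which suffices when $n$ is even); by Theorem~\ref{Thm:Monomial} the map $\varphi$ extends to an element of $G_n(\k)$, and $\varphi^{-1}l\varphi$ is exactly the shear $(x_1,\dots,x_n)\mapsto(x_1+x_2^{a_2}\cdots x_n^{a_n},x_2,\dots,x_n)$. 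This disposes of all integer exponents, positive or negative, at once, with the hypothesis $na_2$ even entering only through the column-parity check.
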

\begin{proof}

If $a_2$ is even, the matrix 
$$\left(\begin{array}{rrrrr}
1 & -a_2 & -a_3& \dots & -a_n\\
 & 1 & a_3 & \dots & a_n\\
 && 1\\
 &&&\ddots \\
 &&&&1\end{array}\right)$$
 belongs to $\GL(n,\Z)_{\mathrm{odd}}$, so the birational map of $\A^n_\k$ given by
 $$\varphi\colon (x_1,\dots,x_n)\dashmapsto (x_1x_2^{-a_2}\dots x_n^{-a_n},x_2x_3^{a_3}\dots x_n^{a_n},x_3,\dots,x_n)$$
 extends to an element of $G_n(\k)$ by Theorem~\ref{Thm:Monomial}. The same holds if $a_2$ is odd but $n$ is even, since in this case all monomial elements belong to $G_n(\k)$, again by Theorem~\ref{Thm:Monomial}.
  We take
  $$l\colon (x_1,\dots,x_n)\mapsto (x_1+1,x_2,\dots,x_n)$$
  and obtain
  $$\begin{array}{rcl}
  l\varphi\colon(x_1,\dots,x_n) &\dashmapsto&(x_1x_2^{-a_2}\dots x_n^{-a_n}+1,x_2x_3^{a_3}\dots x_n^{a_n},x_3,\dots,x_n)\\
  \varphi^{-1} l\varphi\colon(x_1,\dots,x_n) &\dashmapsto&(x_1+x_2^{a_2}\dots x_n^{a_n},x_2,x_3,\dots,x_n),\\
  \end{array}$$
  which concludes the proof.
\end{proof}
\begin{corollary}\label{Cor:EvenTame}
If $n$ is even and $\car(\k)\not=2$, any linear embedding of $\A^n_\k$ to $\p^n_\k$ yields an inclusion $\Taut(\A^n_\k)\subset G_n(\k)$.
\end{corollary}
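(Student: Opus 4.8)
The plan is to show that $G_n(\k)$ contains each generator of $\Taut(\A^n_\k)$. By definition $\Taut(\A^n_\k)$ is generated by the affine automorphisms (that is, $\GL(n,\k)$ together with the translations) and by the elementary automorphisms, i.e.\ those that modify a single coordinate by adding a polynomial in the remaining ones. Since inclusion of a subgroup only needs to be checked on generators, it suffices to treat these two families separately.

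First I would dispose of the affine part. Via the fixed linear embedding $\A^n_\k\to \Pn$, every affine automorphism extends to a linear automorphism of $\Pn$, hence lies in $\Aut(\Pn)\subset G_n(\k)$; in particular the coordinate permutations and the coordinate scalings $(x_1,\dots,x_n)\mapsto (\lambda x_1,x_2,\dots,x_n)$ belong to $G_n(\k)$, and I will use these freely below. The heart of the argument is the elementary automorphisms, and this is precisely where the evenness of $n$ makes Lemma~\ref{Lemm:MonomialTame} applicable with no parity restriction. Conjugating by a coordinate permutation, it is enough to handle maps of the form $(x_1,\dots,x_n)\mapsto (x_1+p(x_2,\dots,x_n),x_2,\dots,x_n)$. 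Writing $p=\sum_k \lambda_k\, x_2^{a_{2,k}}\cdots x_n^{a_{n,k}}$ as a $\k$-linear combination of monomials, I observe that the maps fixing $x_2,\dots,x_n$ and translating $x_1$ by a polynomial in $x_2,\dots,x_n$ form a subgroup under composition isomorphic to the additive group $(\k[x_2,\dots,x_n],+)$; thus the map attached to $p$ is the composite of the maps attached to its monomial summands. Each such summand is, after conjugation by a scaling, reduced to the coefficient-one case $(x_1,\dots,x_n)\mapsto (x_1+x_2^{a_2}\cdots x_n^{a_n},x_2,\dots,x_n)$, and since $n$ is even, $na_2$ is even for every $a_2$, so Lemma~\ref{Lemm:MonomialTame} guarantees that this map extends to an element of $G_n(\k)$.

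Putting these pieces together, every elementary automorphism lies in $G_n(\k)$, and combined with the affine automorphisms this exhibits all generators of $\Taut(\A^n_\k)$ inside $G_n(\k)$, giving $\Taut(\A^n_\k)\subset G_n(\k)$. There is no genuinely hard step here: the substantive content is already in Lemma~\ref{Lemm:MonomialTame} (ultimately resting on Theorem~\ref{Thm:Monomial}), and the only thing the evenness of $n$ buys us is that the hypothesis ``$na_2$ even'' becomes automatic. The one point deserving a line of care is the decomposition of an arbitrary polynomial translation into its monomial summands, which is immediate once one notes that these translations commute and add.
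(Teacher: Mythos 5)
Your argument is correct and follows essentially the same route as the paper: affine automorphisms land in $\Aut(\Pn)\subset G_n(\k)$, and the elementary generators reduce to the coefficient-one monomial translations of Lemma~\ref{Lemm:MonomialTame}, whose hypothesis $na_2$ even is automatic for $n$ even. You merely spell out the monomial decomposition and the scaling conjugation that the paper leaves implicit.
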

\begin{proof}
Follows from Lemma~\ref{Lemm:MonomialTame} and from the fact that $\Taut(\A^n_\k)$ is generated by affine automorphisms, which extend to elements of $\Aut(\p^n_\k)\subset G_n(\k)$, and maps of the form $$(x_1,\dots,x_n)\mapsto (x_1+x_2^{a_2}\dots x_n^{a_n},x_2,\dots,x_n).$$
\end{proof}

In the case where $n$ is odd, we can work a little bit more and obtain the same result:
\begin{proposition}\label{Tame:inGn}
If $n\ge 2$ and $\car(\k)\not=2$, any linear embedding of $\A^n_\k$ to $\p^n_\k$ yields an inclusion $\Taut(\A^n_\k)\subset G_n(\k)$.\end{proposition}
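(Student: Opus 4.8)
The plan is to reduce to the case $n$ odd, then to elementary maps, and finally to monomials, where a module structure lets me bootstrap from the maps already known to lie in $G_n(\k)$. Since $\Taut(\A^n_\k)$ is generated by $\Aff_n$, which extends into $\Aut(\Pn)\subset G_n(\k)$, together with the elementary automorphisms, and since every elementary automorphism is conjugate by a coordinate permutation to one of the form $e_p\colon (x_1,\dots,x_n)\mapsto(x_1+p(x_2,\dots,x_n),x_2,\dots,x_n)$, it suffices to prove $e_p\in G_n(\k)$ for every $p\in\k[x_2,\dots,x_n]$. For $n$ even this is Corollary~\ref{Cor:EvenTame}, so assume $n$ odd. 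Because $e_pe_q=e_{p+q}$, the set $W=\{p\mid e_p\in G_n(\k)\}$ is an additive subgroup of $\k[x_2,\dots,x_n]$; conjugating $e_p$ by $x_1\mapsto cx_1$ gives $e_{cp}$, so $W$ is a $\k$-subspace, and conjugating by a linear map $\beta\in\GL(n-1,\k)$ acting on $x_2,\dots,x_n$ (an element of $\Aut(\Pn)\subset G_n(\k)$) sends $e_p$ to $e_{p\circ\beta^{-1}}$, so $W$ is a graded $\GL(n-1,\k)$-submodule of $\k[x_2,\dots,x_n]$. Thus it suffices to show that every monomial lies in $W$.

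By Lemma~\ref{Lemm:MonomialTame} together with coordinate permutations, $e_m\in G_n(\k)$ for every monomial $m=x_2^{a_2}\cdots x_n^{a_n}$ having at least one even exponent $a_i$ (including $a_i=0$). Hence $W$ already contains every monomial that is not of full support with all exponents odd. The monomials that remain to be treated are therefore exactly the $x_2^{a_2}\cdots x_n^{a_n}$ with all $a_i$ odd and $\ge 1$; note that since $n-1$ is even, their degree $\sum a_i$ is even, consistently with the parity constraint of Proposition~\ref{Prop:Discrepancy}.

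To reach these full-support all-odd monomials I would use transvections, which stay inside $G_n(\k)$ since they are linear. First I would build the squarefree full-support monomial $x_2\cdots x_n$: for a monomial $u$ not involving $x_i,x_j$ with $u\in W$, apply $\beta\colon x_i\mapsto x_i+\lambda x_j$ to $ux_i^2\in W$; since $(x_i+\lambda x_j)^2=x_i^2+2\lambda x_ix_j+\lambda^2x_j^2$ and the two pure-square terms lie in $W$, subtracting them yields $ux_ix_j\in W$, the coefficient $2\lambda$ being nonzero because $\car(\k)\ne 2$. Pairing up the $n-1$ variables (an even number) gives $x_2\cdots x_n\in W$. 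To raise exponents I would induct on the degree and, within a fixed degree, on a chosen pivot exponent: to obtain $T=x^{\mathbf a}$ with all $a_i$ odd, take the even-exponent neighbour $g=x^{\mathbf a+e_i-e_j}\in W$ and apply $x_i\mapsto x_i+\lambda x_j$; in the binomial expansion the even-index terms lie in $W$, the odd-index terms other than $T$ are full-support all-odd monomials of strictly smaller $x_i$-exponent and hence already in $W$ by induction, and the surviving contribution of $T$ is a nonzero multiple of $\binom{a_i+1}{1}=a_i+1$. Choosing the pivot $i$ so that this coefficient is invertible modulo $\car(\k)$ then gives $T\in W$, whence $W=\k[x_2,\dots,x_n]$ and $\Taut(\A^n_\k)\subset G_n(\k)$.

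The heart of the matter, and the only place where $n$ odd causes genuine trouble, is precisely this family of full-support all-odd monomials, which is the same parity obstruction that forces the hypothesis ``$na_2$ even'' in Lemma~\ref{Lemm:MonomialTame}; the point that the cross-term trick circumvents is that such a monomial leaves no spare variable to transvect into. The delicate technical obstacle I expect is positive characteristic: the module $\Sym^d$ of degree-$d$ forms is not semisimple, so one cannot simply invoke irreducibility to pass from a single known monomial to all of the graded piece, and the binomial coefficients governing the extraction can vanish. The plan for this is to exploit the freedom in the choice of pivot and neighbour $g$ (and, over small finite fields, enough values of $\lambda$ to run a Vandermonde separation of the surviving terms) so that the governing coefficient is always a nonzero scalar; the base case of the induction rests on the coefficient $2$, which is invertible exactly because $\car(\k)\ne 2$, explaining why this hypothesis is the natural one.
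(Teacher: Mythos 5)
Your reduction is sound up to and including the squarefree case: $W=\{p\mid e_p\in G_n(\k)\}$ is indeed a $\k$-subspace stable under the affine group of $x_2,\dots,x_n$, Lemma~\ref{Lemm:MonomialTame} plus permutations puts every monomial with at least one even exponent into $W$, and your cross-term trick $u(x_i+\lambda x_j)^2=ux_i^2+2\lambda ux_ix_j+\lambda^2ux_j^2$ correctly yields $x_2\cdots x_n\in W$ using only $\car(\k)\neq 2$. The gap is the exponent-raising step. Your induction extracts $T=x^{\mathbf a}$ from the transvection applied to $x^{\mathbf a+e_i-e_j}$ with governing coefficient $\binom{a_i+1}{1}=a_i+1$, which is always even and can vanish in $\k$ for \emph{every} choice of pivot (e.g.\ $\car(\k)=3$ and all $a_i\equiv 2\pmod 3$, such as $x_2^5x_3^5$ for $n=3$). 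The two escape routes you propose do not close this: a Vandermonde separation over $\lambda$ cannot resurrect a term whose binomial coefficient is identically zero, and over small fields it cannot even separate the surviving powers (over $\mathbb{F}_3$ one has $\lambda=\lambda^3=\lambda^5=\cdots$ for $\lambda\neq 0$, so a single linear combination of the all-odd monomials is all you obtain); using a more distant even neighbour $g=x^{\mathbf a+ce_i-ce_j}$ reintroduces all-odd terms with \emph{larger} pivot exponent and breaks the induction. So as written the argument is incomplete precisely in the case you yourself flag as delicate.

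The missing idea, which the paper uses and which would close your gap immediately, is that $W$ is also stable under \emph{multiplication by $x_i^2$}: the monomial map $f\colon(x_1,\dots,x_n)\dashmapsto(x_1x_i^2,x_2,\dots,x_n)$ lies in $\GL(n,\Z)_{\mathrm{odd}}$, hence in $G_n(\k)$ by Theorem~\ref{Thm:Monomial}, and conjugating $e_p$ by $f^{\pm1}$ gives $e_{p\cdot x_i^{\pm2}}$ with no binomial coefficients whatsoever. Since every full-support all-odd monomial is $x_2\cdots x_n$ times a product of squares $x_i^{2b_i}$, your correctly established $x_2\cdots x_n\in W$ then finishes the proof in one line, uniformly in the characteristic. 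The paper's own route is parallel but organised differently: it first produces the exponent-one case $e_{2x_2m}$ for arbitrary $m=x_3^{v_3}\cdots x_n^{v_n}$ by the inhomogeneous identity $(x_2+1)^2m-x_2^2m-m=2x_2m$ (a translation conjugate, available because $W$ is an affine- and not merely linear-module), and then raises the $x_2$-exponent by $2$ via the same monomial conjugation. Your homogeneous construction of $x_2\cdots x_n$ is a pleasant alternative to that translation trick, but without the multiplicative step your induction on exponents fails in positive characteristic.
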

\begin{proof}
By Corollary~\ref{Cor:EvenTame}, we can assume that $n$ is odd. The aim is to show that each map of the form
$$P_v=(x_1,\dots,x_n)\mapsto (x_1+x_2^{v_2}\dots x_n^{v_n},x_2,\dots,x_n),$$
extends to an element of $G_n(\k)$, for each $v=(v_2,\dots,v_n)\in \mathbb{N}^{n-1}$. It follows from Lemma~\ref{Lemm:MonomialTame} that this works if one of the $v_i$ is even (apply permutations of coordinates).

We can also get $P_v$ if one of the $v_i$ is equal to $1$. Indeed, for each $(v_3,\dots,v_n)\in \mathbb{N}^{n-2}$, the following maps belong to $G_{n}(\k)$:
$$\begin{array}{llll}
(x_1,\dots,x_n)&\mapsto& (x_1+(x_2+1)^{2}x_3^{v_3}\dots x_n^{v_n},x_2,\dots,x_n),\\
(x_1,\dots,x_n)&\mapsto& (x_1-x_3^{v_3}\dots x_n^{v_n},x_2,\dots,x_n),\\
(x_1,\dots,x_n)&\mapsto& (x_1-(x_2)^2x_3^{v_3}\dots x_n^{v_n},x_2,\dots,x_n),\end{array}$$
so the composition $(x_1,\dots,x_n)\mapsto (x_1+2x_2x_3^{v_3}\dots x_n^{v_n},x_2,\dots,x_n)$ belongs to $G_n(\k)$. As $\car(\k)\not=2$, 
we get  $P_v\in G_n(\k)$.

Then, we observe that 
$$f\colon (x_1,\dots,x_n)\dasharrow (x_1\cdot (x_2)^2,x_2,\dots,x_n)$$
extends to an element of $G_n(\k)$, by Theorem~\ref{Thm:Monomial}. Conjugating 
$$(x_1,\dots,x_n)\mapsto (x_1+x_2^{v_2}\dots x_n^{v_n},x_2,\dots,x_n)$$
by $f$ we obtain
$$(x_1,\dots,x_n)\mapsto (x_1+x_2^{v_2+2}x_3^{v_3}\dots x_n^{v_n},x_2,\dots,x_n).$$ This implies that we can get all elements $P_v$.
\end{proof}
\begin{corollary}\label{Coro:Tame2Elements}
For each $n\ge 2$, the group $\Taut(\A^n_\k)$ is contained in the subgroup of $\Bir(\A^n_\k)$ generated by affine automorphisms and by 
the following two birational involutions
$$\begin{array}{rcl}
(x_1,\dots,x_n)&\dashmapsto & (\frac{1}{x_1},\dots,\frac{1}{x_n})\\
(x_1,\dots,x_n)&\dashmapsto & (\frac{1}{x_1},\frac{x_2}{x_1},\dots,\frac{x_n}{x_1})\end{array}$$

\end{corollary}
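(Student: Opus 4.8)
The plan is to retrace the chain of constructions culminating in Proposition~\ref{Tame:inGn} and to check that none of them requires more than affine automorphisms, the standard involution $\sigma_n$, and permutations of the homogeneous coordinates; the whole point of the statement is that the second involution supplies exactly the one transposition needed to upgrade the affine permutations to the full symmetric group. Write $\sigma_n$ for the first involution and $\nu$ for the second, which in homogeneous coordinates is the transposition $[x_0:x_1:x_2:\dots:x_n]\mapsto[x_1:x_0:x_2:\dots:x_n]$ (as computed in the proof of Proposition~\ref{Prop:GnGnp}), and let $H\subset\Bir(\A^n_\k)$ denote the group generated by $\Aff_n$, by $\sigma_n$ and by $\nu$. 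Note that $H\subset G_n(\k)$ automatically, since $\Aff_n\subset\Aut(\Pn)$, $\nu\in\Aut(\Pn)$ and $\sigma_n\in G_n(\k)$.

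The key step is to show that $H$ contains the group $\Sym_{n+1}\subset\Aut(\Pn)$ of all permutations of homogeneous coordinates. The permutations of $x_1,\dots,x_n$ fixing $x_0$ are affine automorphisms, hence lie in $H$, and they form the copy of $\Sym_n$ stabilising $0$ inside $\Sym_{n+1}$. This point stabiliser is a maximal subgroup of $\Sym_{n+1}$, so adjoining the transposition $(0\,1)=\nu\notin\Sym_n$ yields all of $\Sym_{n+1}$.

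With $\Sym_{n+1}\subset H$ in hand, I would observe that each building block used on the way to Proposition~\ref{Tame:inGn} already lies in $H$: the involution $\theta_n$ equals $\alpha_1\sigma_n\alpha_1\sigma_n\alpha_1$ with $\alpha_1\in\Aff_n$ (Example~\ref{Exam:Gizatullin}), and the auxiliary maps $\tau,\tau_1,\tau_2,\chi_0,\chi_1$ and the $\varphi_i$ of Lemma~\ref{Lem:Tau}, Examples~\ref{ExampleChi}--\ref{Examplechi2} and Proposition~\ref{Prop:ContainsMonomials} are obtained from $\theta_n$ and $\sigma_n$ by conjugation and composition with affine automorphisms and coordinate permutations. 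In particular the monomial maps corresponding to $M_\theta,M_\mu,M_\nu$ together with the permutation matrices generate $\GL(n,\Z)_{\mathrm{odd}}$ when $n$ is odd (Lemma~\ref{Lemm:GLNodd}) and all of $\GL(n,\Z)$ when $n$ is even, so $H$ contains all the monomial elements needed in Lemma~\ref{Lemm:MonomialTame}. Feeding this into the proofs of Lemma~\ref{Lemm:MonomialTame} and Proposition~\ref{Tame:inGn}, whose only further ingredients are affine automorphisms and conjugations, shows that every elementary automorphism $(x_1,\dots,x_n)\mapsto(x_1+x_2^{v_2}\cdots x_n^{v_n},x_2,\dots,x_n)$ lies in $H$; since these together with $\Aff_n$ generate $\Taut(\A^n_\k)$, we conclude $\Taut(\A^n_\k)\subset H$ (under the hypothesis $\car(\k)\neq 2$ inherited from Proposition~\ref{Tame:inGn}). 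The only real work is the bookkeeping of the previous paragraph, namely verifying that every linear map occurring in the earlier arguments is either affine or a coordinate permutation, so that nothing outside $H$ is ever used; once the inclusion $\Sym_{n+1}\subset H$ is granted this is a routine transcription of the existing proofs with $G_n(\k)$ replaced by $H$.
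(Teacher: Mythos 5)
Your strategy is workable but takes a genuinely different and much longer route than the paper, and the step you defer as ``routine bookkeeping'' is in fact the entire content of your argument. The paper's proof is a one-line reduction: under the embedding $(x_1,\dots,x_n)\mapsto[1:x_1:\dots:x_n]$ the affine group $\Aff_n$ corresponds to the stabiliser $A_0\subset\Aut(\Pn)$ of the hyperplane $x_0=0$, and $A_0$ is a \emph{maximal} subgroup of $\Aut(\Pn)$ because it acts transitively on the remaining hyperplanes (this is already recorded in the proof of Proposition~\ref{Prop:GnGnp}). Since your $\nu$ is an element of $\Aut(\Pn)\setminus A_0$, the group $H$ you define contains all of $\Aut(\Pn)$, not merely $\Sym_{n+1}$, hence $H\supseteq\langle\Aut(\Pn),\sigma_n\rangle=G_n(\k)$, and the corollary follows at once from Proposition~\ref{Tame:inGn} with no need to revisit any earlier construction. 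In short, you applied the right maximality trick to the wrong group: applying it to $\PGL(n+1,\k)$ rather than to $\Sym_{n+1}$ deletes your entire third paragraph. As written, your argument is incomplete rather than wrong: the claim that every linear map appearing in Examples~\ref{Exam:Gizatullin}--\ref{Examplechi2}, Lemma~\ref{Lem:Tau}, Proposition~\ref{Prop:ContainsMonomials}, Lemma~\ref{Lemm:MonomialTame} and Proposition~\ref{Tame:inGn} is affine or a coordinate permutation is true on inspection (the passages from $\theta_n$ to the various $\varphi_i$, $\tau'$, $\tau_1$, $\tau_2$ use permutations moving the index $0$ together with sign changes, which your $\Sym_{n+1}\subset H$ does cover), but you have not carried out this verification, and it is exactly the verification the stronger maximality statement makes unnecessary. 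You are right that the hypothesis $\car(\k)\neq2$ is needed, since everything rests on Proposition~\ref{Tame:inGn}; the printed statement of the corollary omits it.
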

\begin{proof}
We fix the linear embedding $\A^n_\k\to \p^n_\k$, $(x_1,\dots,x_n)\mapsto [1:x_1:\dots:x_n]$, that fixes an isomorphism $\Bir(\A^n_\k)\to \Bir(\p^n_\k)$. The image of affine automorphisms corresponds then to the subgroup of $\Aut(\p^n_\k)$ that preserve the hyperplane $x_0$, being the complement of $\A^n_\k$. The group $G_n(\k)$ is then generated by this group, by the automorphism $[x_0:\dots:x_n]\mapsto [x_1:x_0:x_2:\dots:x_n]$ and by $\sigma_n$. Writing the action of these two elements on $\A^n_\k$ gives the result.
\end{proof}
\begin{example}
Let us recall that the famous Nagata automorphism of $\A^3_\k$ is given by
$$N\colon (x,y,z)\mapsto(x+2y(xz-y^2)+z(xz-y^2)^2, y+z(xz-y^2), z).$$
This element was proven to be in $\Aut(\A^3_\k)\setminus \Taut(\A^3_\k)$ if $\car(\k)=0$ in \cite{ShUm}. The case of $\car(\k)>0$ is however still open.
\end{example}
\begin{proposition}Let $\k$ be a field such that $\car(\k)\not=2$.
Taking any linear embedding $\A^3_\k\to \p^3_\k$ sends the Nagata automorphism to an element of $G_3(\k)$.
\end{proposition}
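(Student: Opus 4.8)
The plan is to remove the wildness of the Nagata automorphism $N$ by a single birational conjugation that straightens it into an elementary (hence tame) map, and then to check that the conjugating map itself lies in $G_3(\k)$. Since two linear embeddings $\A^3_\k\to\p^3_\k$ differ by an element of $\Aut(\p^3_\k)\subset G_3(\k)$, it suffices to treat the standard embedding $(x,y,z)\mapsto[1:x:y:z]$, so I work directly with $N\in\Bir(\A^3_\k)$. The starting observation is that $N$ preserves the quadric $f=xz-y^2$: writing $N=(x+2fy+f^2z,\,y+fz,\,z)$ one checks $f\circ N=f$, and in fact $N$ is obtained from the affine vector field $D=2y\,\frac{\partial}{\partial x}+z\,\frac{\partial}{\partial y}$ (which kills $f$ and $z$ and sends $y\mapsto z$) by taking $f$ as time parameter. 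This is the structure I will exploit.

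Concretely, first I would introduce the birational coordinate change
\[
\Phi\colon (x,y,z)\dashmapsto\Big(x-\tfrac{y^2}{z},\,y,\,z\Big),
\]
whose effect is to straighten $D$: the new first coordinate $u=x-y^2/z$ satisfies $D(u)=0$. A direct substitution of the components of $N$ into $\Phi$ and $\Phi^{-1}$, followed by simplification, shows that $u$ is preserved by $N$ and that $y\mapsto y+z^2u$, so that in the coordinates $(u,v,z)$ with $v=y$ one obtains
\[
\Phi N\Phi^{-1}\colon (u,v,z)\dashmapsto (u,\,v+u z^2,\,z).
\]
This is an elementary automorphism of $\A^3_\k$ (a polynomial shift of the second coordinate), hence tame, and therefore belongs to $G_3(\k)$ by Proposition~\ref{Tame:inGn}, where the hypothesis $\car(\k)\neq2$ is used. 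Consequently $N=\Phi^{-1}\big(\Phi N\Phi^{-1}\big)\Phi$ will lie in $G_3(\k)$ as soon as $\Phi\in G_3(\k)$.

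It remains to place $\Phi$ in $G_3(\k)$, and this is where the parity condition of the paper enters. The map $(x,y,z)\mapsto(x+y^2z^{-1},y,z)$ is of the form treated in Lemma~\ref{Lemm:MonomialTame} with $a_2=2$ and $a_3=-1$; since $na_2=3\cdot2=6$ is even, it extends to an element of $G_3(\k)$. Composing on both sides with the linear involution $s\colon(x,y,z)\mapsto(x,y,-z)\in\Aut(\p^3_\k)\subset G_3(\k)$ reverses the sign of the monomial and produces exactly $\Phi$, so $\Phi\in G_3(\k)$ and the proof concludes. The main point---the \emph{little bit more work} alluded to in the introduction---is the choice of the straightening map $\Phi$: its nonlinear part is the \emph{even} power $y^2$, which is precisely what allows the delicate Lemma~\ref{Lemm:MonomialTame} to apply in the odd dimension $n=3$, whereas an odd power there would obstruct membership in $G_3(\k)$, as the behaviour of $\xi_3$ illustrates. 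I expect the only genuinely computational step to be the verification of the displayed conjugate, which is a routine substitution.
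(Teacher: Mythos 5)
Your proof is correct and is essentially the paper's own argument: both conjugate $N$ by the birational map $(x,y,z)\dashmapsto(x\mp\frac{y^2}{z},y,z)$ to obtain the elementary (hence tame) automorphism $(x,y,z)\mapsto(x,y+xz^2,z)$, which lies in $G_3(\k)$ by Proposition~\ref{Tame:inGn}. The only cosmetic difference is that you place the conjugating map in $G_3(\k)$ via Lemma~\ref{Lemm:MonomialTame} (with $a_2=2$, $a_3=-1$) rather than via Example~\ref{Exam:Gizatullin}; both justifications are valid.
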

\begin{proof}
Recall the following classical observation: the Nagata automorphism can be seen as an automorphism of the affine plane over the field $\k(z)$, and as such can be decomposed as a composition of elementary automorphisms and affine automorphisms.

Explicitely, we can write
$$\begin{array}{rcccc}
\alpha&\colon& (x,y,z)&\dashmapsto & (x+\frac{y^2}{z},y,z) \\
\beta&\colon& (x,y,z)&\dashmapsto & (x,y+xz^2,z) \\
\end{array}$$
and obtain $N=\alpha^{-1}\beta\alpha$. We fix a linear embedding $(x,y,z)\mapsto [1:x:y:z]$ of $\A^3_\k$ into $\p^3_\k$ and obtain the birational maps 
$$\begin{array}{rllll}
\hat\alpha&\colon& [w:x:y:z]&\dashmapsto & [w:x+\frac{y^2}{z}:y:z] \\
\hat\beta&\colon& [w:x:y:z]&\dashmapsto & [w:x:y+\frac{xz^2}{w^2}:z].
\end{array}$$
The map $\hat\alpha$ belongs to $G_3(\k)$ by Example~\ref{Exam:Gizatullin}. The map $\hat\beta$ also belongs to $G_3(\k)$ as it is the extension of a tame automorphism $\beta\in \Taut(\A^3_\k)$ (Proposition~\ref{Tame:inGn}).
\end{proof}

\begin{remark}
In $\Taut(\A^n_\k)$, one easily gets elements whose inverse does not have the same degree. For example, $$(x_1,\dots,x_n)\mapsto (x_1+(x_2)^2,x_2+(x_3)^2,x_3,\dots,x_n)$$
has degree $2$ but its inverse has degree $4$. This provides other kind of elements  $g\in G_n(\k)$, such that $\deg(g)\not=\deg(g^{-1})$ (see Example~\ref{Example:NotSamedegree}).
\end{remark}

\section{Rational hypersurfaces contracted}\label{Sec:NotRational}
Let us recall why there are elements of $\Bir(\Pn)$, for each $n\ge 3$, that contract non-rational hypersurfaces (\cite[Page 381, \S31]{Hudson}, \cite{Pan99}). This is done for example by taking an irreducible polynomial $q\in \k[x,y]$ of degree $d>1$ that defines a non-rational curve $\Gamma$ of $\A^2$, and by considering the birational map
$$[x_0:\dots:x_n]\dasharrow [x_0 q(\frac{x_1}{x_3},\frac{x_2}{x_3}):x_1:\dots:x_n],$$
 which contracts the hypersurface given by $q(\frac{x_1}{x_3},\frac{x_2}{x_3})\cdot (x_3)^d$, birational to $\Gamma\times \p^{n-2}$. Note that this argument directly implies that the group $\Bir(\Pn)$ is in fact not generated by $\Aut(\Pn)$ and a finite number of elements. However, the question of whether $\Bir(\Pn)$ is generated by maps preserving a fibration is still open.

 If $n=2$ and $\k$ is not algebraically closed, a similar argument works: we take an irreducible polynomial $p\in \k[x]$ of degree $d>1$, having roots in $\kk\setminus \k$, where $\kk$ is the algebraic closure of $\k$. We then consider the birational map of $\p^2_\k$ given by 
$$[x_0:x_1:x_2]\dashmapsto [x_0:x_1:x_2 p(\frac{x_1}{x_0})]$$
which contracts the curve given by $p(\frac{x_1}{x_0})x_0^d=0$, which is irreducible over $\k$ and not rational (but is a union of lines over $\kk$). 

These two observations show that $G_n(\k)\not=\Bir(\Pn)$ if $n\ge 3$ or if $n=2$ and $\k$ is not algebraically closed.

\bigskip

The description of $G_2(\k)$, for any field $\k$ is given in Proposition~\ref{Prop:G2Krational} below, and follows from an adaptation of the classical proofs of Noether-Castelnuovo's theorem, and the following easy observation on the relation between the base-points and curves contracted.
\begin{lemma}\label{Lem:BasePts}
Let $\k$ be any field, and let $\kk$ be its algebraic closure. Let $\varphi\in \Bir(\p^2_\k)$ be a birational map, which is also a birational map of $\p^2_{\kk}$. The following are equivalent.
\begin{enumerate}[$(1)$]
\item
Every $\kk$-base-point of $\varphi$ is defined over $\k$.
\item
Every $\kk$-base-point of $\varphi^{-1}$ is defined over $\k$. 
\item
Every irreducible $\kk$-curve contracted by $\varphi$ is defined over $\k$. 
\item
Every irreducible $\k$-curve contracted by $\varphi$ is rational over $\k$. 
\end{enumerate}
\end{lemma}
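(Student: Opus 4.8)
The plan is to translate every condition into a statement about the action of the Galois group $\mathcal{G}=\mathrm{Gal}(\kk/\k)$ on a fixed resolution of $\varphi$, using that a subvariety of $\p^2_{\kk}$ is defined over $\k$ if and only if it is $\mathcal{G}$-invariant, and that an irreducible $\k$-curve is rational over $\k$ if and only if it is geometrically integral with normalisation $\p^1_\k$. Concretely, I would take the minimal resolution $\pi\colon X\to \p^2_{\kk}$, $\eta\colon X\to \p^2_{\kk}$ of $\varphi$ (so $\varphi=\eta\pi^{-1}$), which is canonical and hence carries a compatible $\mathcal{G}$-action, and which may be assumed to have no irreducible curve contracted by both $\pi$ and $\eta$. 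Writing $\mathcal{E}_1,\dots,\mathcal{E}_r$ for the $\pi$-exceptional curves (one per base-point of $\varphi$, counting infinitely near ones) and $\mathcal{F}_1,\dots,\mathcal{F}_r$ for the $\eta$-exceptional curves, I would first record the dictionary: a base-point of $\varphi$ is defined over $\k$ exactly when $\mathcal{G}$ fixes the corresponding $\mathcal{E}_i$, so $(1)$ is equivalent to ``$\mathcal{G}$ fixes each $\mathcal{E}_i$'', and likewise $(2)$ to ``$\mathcal{G}$ fixes each $\mathcal{F}_j$''. Since no $\mathcal{F}_j$ is $\pi$-exceptional, each $C_j=\pi(\mathcal{F}_j)$ is an irreducible curve, and $C_1,\dots,C_r$ are precisely the irreducible $\kk$-curves contracted by $\varphi$; as $C_j$ is defined over $\k$ iff $\mathcal{G}$ fixes its strict transform $\mathcal{F}_j$, condition $(3)$ is again equivalent to ``$\mathcal{G}$ fixes each $\mathcal{F}_j$'', giving $(2)\Leftrightarrow(3)$ at once.

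For $(1)\Leftrightarrow(2)$ the point is that the $\mathcal{G}$-action on the $\mathcal{F}_j$ is forced by the one on the $\mathcal{E}_i$. The group $\mathcal{G}$ acts on $\Pic(X_{\kk})$ by isometries; the classes $[\mathcal{E}_i]$ together with $\pi^{*}(\text{line})$ form a $\Z$-basis, and $\pi^{*}(\text{line})$ is $\mathcal{G}$-invariant. Hence if $(1)$ holds then $\mathcal{G}$ fixes this whole basis, so it acts trivially on $\Pic(X_{\kk})$ and in particular fixes each class $[\mathcal{F}_j]$. Because $\mathcal{F}_j^2<0$, it is the unique irreducible curve in its numerical class, so $\mathcal{G}$ fixes $\mathcal{F}_j$ itself, and $(2)$ holds; the reverse implication is identical with the roles of $\pi$ and $\eta$ exchanged.

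Finally I would treat $(3)\Leftrightarrow(4)$. A curve rational over $\k$ is geometrically integral, so if an irreducible $\k$-curve contracted by $\varphi$ is rational over $\k$ it coincides with a single contracted $\kk$-curve defined over $\k$; applied to all such curves this yields $(4)\Rightarrow(3)$. For $(3)\Rightarrow(4)$ I would use $(3)\Rightarrow(2)$, so that all base-points of $\varphi^{-1}$ are $\k$-rational; then each $\mathcal{F}_j$, being the strict transform of the exceptional curve over a $\k$-rational point under successive blow-ups of further $\k$-rational points, is isomorphic to $\p^1_\k$ over $\k$, and $\pi$ restricts to a birational $\k$-morphism $\mathcal{F}_j\to C_j$, whence $\k(C_j)=\k(t)$ and $C_j$ is rational over $\k$.

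The main obstacle is the step $(1)\Leftrightarrow(2)$: showing that the Galois action on the contracted configuration of $\varphi$ both determines and is determined by the action on its base-point configuration. This is exactly what the passage through $\Pic(X_{\kk})$, combined with the uniqueness of a negative curve in its numerical class, is designed to handle. It is also the place where one must argue carefully with infinitely near base-points, so that ``all base-points defined over $\k$'' genuinely forces $\mathcal{G}$ to fix the classes $[\mathcal{E}_i]$ and not merely to permute the exceptional curves within a tower.
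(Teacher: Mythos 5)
Your proof is correct and follows essentially the same route as the paper: both pass to the minimal resolution of $\varphi$ and use that the exceptional classes of one side (together with the pullback of a line) generate $\Pic$ of the resolution, so that the negative curves contracted by the other side, being rigid in their numerical classes, are forced to be defined over $\k$. The differences are cosmetic --- you phrase invariance via the Galois action (which, strictly speaking, only detects ``defined over $\k$'' when $\k$ is perfect, whereas the paper's formulation via linear equivalence of divisors defined over $\k$ sidesteps this for arbitrary fields), and you spell out the step $(3)\Rightarrow(4)$ via the $\k$-structure $\mathcal{F}_j\cong\p^1_\k$, which the paper leaves implicit.
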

\begin{proof}Seeing $\varphi$ as an element of $\Bir(\p^2_{\kk})$, where $\kk$ is the algebraic closure of $\k$, we have a minimal resolution
\[\xymatrix{
&Z\ar[dl]_\pi\ar[dr]^\eta&\\
\p^2_{\kk}\ar@{-->}[rr]^{\varphi}&&\p^2_{\kk}\\}\]
where $\pi$ and $\eta$ are the blow-ups of the $\kk$-base-points of $\varphi$ and $\varphi^{-1}$  respectively. 

$a)$ We suppose that one $\kk$-base-point of $\varphi^{-1}$ is not defined over $\k$. There is thus a $(-1)$-curve  $E\subset Z$, not defined over $\k$, which is contracted by $\eta$ and not by $\pi$. The image $\pi(E)$ is then an irreducible $\kk$-curve in $\p^2_{\kk}$, contracted by $\varphi$. This yields $(3)\Rightarrow (2)$.

$b)$ If one irreducible $\kk$-curve $C$ in $\p^2_{\kk}$ is contracted by $\varphi$ but not defined over $\k$, the closure of $C$ under the $\k$-topology is an irreducible $\k$-curve, not irreducible over $\kk$, and contracted by $\varphi$. This curve being not rational over $\k$, we obtain $(4)\Rightarrow (3)$.

$c)$ Suppose that $(2)$ holds. Observing that the Picard group of $Z$, viewed as a $\kk$-variety, is generated by the pull-back by $\eta$ of the divisor of a line in $\p^2_{\kk}$ and by the exceptional divisors produced by the corresponding blow-ups, we see that each $\kk$-curve on $Z$ is linearly equivalent to a divisor defined over $\k$. Hence, all rational irreducible $\kk$-curves in $Z$ with negative self-intersection are defined over $\k$, since they are rigid in their equivalence classes. This implies that all irreducible $\kk$-curves contracted by $\eta$ and $\pi$ are defined over $\k$, and yields $(1)$, $(3)$ and $(4)$. 

$d)$ The last implication needed is $(1)\Rightarrow (2)$ which is equivalent to $(2)\Rightarrow (1)$, by replacing $\varphi$ with its inverse.
\end{proof}

The classical proofs of the Noether-Castelnuovo's theorem decompose an element of $\Bir(\p^2_{\kk})$ into a product of quadratic and linear elements. One of the simplest can be find in \cite{Alexander}, and works perfectly here if the field is infinite, but not in the case of finite fields, as it uses the choice of "general" points. The same phenomenon occurs for the classical proof of G. Castelnuovo, well explained in  \cite[Chapter 8]{alberich}, together with historical references. We will then take the proofs given by the Minimal Model Program (see \cite[Comment 4, page 622]{Isk96}).

\bigskip

Using the classical Noether inequalities \cite[Lemma 2.4]{Isk96}, one can obtain the decomposition of every element of $\Bir(\p^2_\k)$ into Sarkisov links \cite[Theorem 2.5]{Isk96}. The fact that all our base-points are defined over $\k$ implies that the base-points of the Sarkisov links  are also defined over $\k$. Hence, the links involved are the following \cite[Section 2.2]{Isk96}.

\begin{enumerate}
\item[$(\mathrm{I})$]
A birational map $\p^2_\k\dasharrow \mathbb{F}_1$ given by the blow-up of a $\k$-point of $\p^2_\k$.
\item[$(\mathrm{II})$]
A birational map $\mathbb{F}_m\dasharrow \mathbb{F}_{m\pm 1}$ given by the blow-up of a $\k$-point, followed by the contraction of the strict transform of a fibre.
\item[$(\mathrm{III})$]
A birational morphism $\mathbb{F}_1\to \p^2_\k$ given by the contraction of the $(-1)$-curve onto a $\k$-point of $\p^2_\k$ (inverse of a link of type $\mathrm{I}$).
\item[$(\mathrm{IV})$]
The automorphism $\mathbb{F}_0=\p^1_\k\times \p^1_\k\to \mathbb{F}_0$ that consists of exchanging the two factors.
\end{enumerate}
\begin{remark}\label{RemarkLinkIV}
Usually, there is a fibration associated to each $\mathbb{F}_i$, which is implicit for $i\ge 1$ but important for $\mathbb{F}_0$  as there are two such; this explains why links of type $\mathrm{IV}$ arise. For our purpose, we do not need to really consider the fibration, and observe that the composition of a link with automorphisms is again a link. So we consider automorphisms as composition of zero links, and will then not need links of type $\mathrm{IV}$.
\end{remark}
\begin{lemma}\label{Lem:Links}
Let $\k$ be any field and let $\varphi\in \Bir(\p^2_\k)$ be a birational map, such that all $\kk$-base-points are defined over $\k$. Then, $\varphi$ decomposes into a sequence of elementary links as above, involving only $\p^2_\k$, $\mathbb{F}_1$, $\mathbb{F}_0$.
\end{lemma}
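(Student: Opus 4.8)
The plan is to start from the Sarkisov decomposition of $\varphi$ guaranteed by \cite[Theorem 2.5]{Isk96} (obtained via the Noether inequalities \cite[Lemma 2.4]{Isk96}), and then to rewrite it so that no Hirzebruch surface $\mathbb{F}_m$ with $m\ge 2$ occurs. By hypothesis all $\kk$-base-points of $\varphi$ are defined over $\k$, and the same then holds for all the centres of the intermediate links: they are images, on the various models, of points lying over the common resolution of $\varphi$ and $\varphi^{-1}$, all of which are $\k$-rational. In particular no centre is ever chosen in general position, which is exactly what lets the argument go through over an arbitrary, possibly finite, field. Absorbing the links of type $\mathrm{IV}$ into automorphisms as in Remark~\ref{RemarkLinkIV}, we obtain a chain of links of types $\mathrm{I}$ ($\p^2_\k\dasharrow\mathbb{F}_1$), $\mathrm{II}$ ($\mathbb{F}_m\dasharrow\mathbb{F}_{m\pm1}$) and $\mathrm{III}$ ($\mathbb{F}_1\dasharrow\p^2_\k$), whose intermediate surfaces are $\p^2_\k$ and the various $\mathbb{F}_m$. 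The only thing left to prove is that the indices $m$ can be kept in $\{0,1\}$.

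The reduction will be an induction on the largest index $M=\max_i m_i$ occurring in the chain and, for fixed $M$, on the number of surfaces $\mathbb{F}_M$ appearing in it. Suppose $M\ge 2$. A link of type $\mathrm{II}$ is an elementary transformation $\mathrm{elm}_p$ centred at a $\k$-point $p$, and, recalling that for $m\ge 1$ the negative section $C_0\subset\mathbb{F}_m$ is the unique irreducible curve of negative self-intersection (hence Galois-invariant and defined over $\k$), it raises the index when $p\in C_0$ and lowers it when $p\notin C_0$. Since $M$ is maximal and links of type $\mathrm{I}$ and $\mathrm{III}$ only involve $\p^2_\k\leftrightarrow\mathbb{F}_1$, each occurrence of $\mathbb{F}_M$ is an isolated peak $\mathbb{F}_{M-1}\xrightarrow{\mathrm{elm}_{p_1}}\mathbb{F}_M\xrightarrow{\mathrm{elm}_{p_2}}\mathbb{F}_{M-1}$ with $p_1\in C_0(\mathbb{F}_{M-1})$ and $p_2\notin C_0(\mathbb{F}_M)$.

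If $p_1$ and $p_2$ lie in distinct fibres of the ruling, the two elementary transformations commute: the image $q_2$ of $p_2$ on $\mathbb{F}_{M-1}$ lies off $C_0$, so $\mathrm{elm}_{q_2}$ lowers the index and the peak can be replaced by the valley $\mathbb{F}_{M-1}\xrightarrow{\mathrm{elm}_{q_2}}\mathbb{F}_{M-2}\to\mathbb{F}_{M-1}$, which represents the same birational map and strictly decreases the number of surfaces of index $M$ (here we use $M-2\ge 0$). This disposes of the generic case and reduces everything to the situation where $p_1$ and $p_2$ lie in a common fibre $F$.

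The hard part is exactly this same-fibre case. Then the peak is a birational self-map of $\mathbb{F}_{M-1}$ supported on the single fibre $F$, and the models of that fibre that are isomorphic away from $F$ form a tree under elementary transformations, on which the two moves go one step up and one step down; the obstruction is that a tree admits no short cut around such a peak, so the index cannot be lowered purely within Hirzebruch surfaces. I expect to resolve this by exploiting the link of type $\mathrm{III}$, which is available precisely at $\mathbb{F}_1$: when $M-1=1$ one contracts the $(-1)$-section $C_0$ to pass to $\p^2_\k$ and re-expresses the same-fibre bump as a product of a quadratic transformation and linear maps, thereby avoiding $\mathbb{F}_2$; the general case $M-1\ge 2$ is then brought down to height $1$ by applying the commuting moves of the previous paragraph to the neighbouring links. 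Throughout, every centre that appears is forced by $\varphi$ and by the Galois-invariant negative sections, hence is automatically $\k$-rational, and the induction terminates with a decomposition using only $\p^2_\k$, $\mathbb{F}_0$ and $\mathbb{F}_1$. The main obstacle is this same-fibre analysis: verifying that the rerouting through $\p^2_\k$ reproduces the original map and never forces the introduction of a non-rational centre.
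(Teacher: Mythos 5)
Your overall strategy --- reduce to Sarkisov links of types $\mathrm{I}$, $\mathrm{II}$, $\mathrm{III}$ with all centres defined over $\k$, then eliminate the peaks $\mathbb{F}_{m}\dasharrow\mathbb{F}_{m+1}\dasharrow\mathbb{F}_{m}$ by induction on the maximal index --- is the same as the paper's, and your treatment of the case where the two centres are proper points on distinct fibres (commute the two elementary transformations so as to replace the peak by a valley through $\mathbb{F}_{m-1}$) is exactly the paper's first case. The gap is in what you call the ``hard part''. First, the genuinely same-fibre configuration does not occur: writing $\varphi_1=\mathrm{elm}_{p_1}\colon\mathbb{F}_m\dasharrow\mathbb{F}_{m+1}$ and $\varphi_2=\mathrm{elm}_{p_2}$, the point $p_2$ lies on the fibre of $\mathbb{F}_{m+1}$ created by $\varphi_1$ only if it is either the point onto which the old fibre through $p_1$ was contracted --- in which case $\varphi_2\varphi_1$ is an isomorphism and the peak simply disappears --- or a point corresponding to a tangent direction at $p_1$ transverse to that fibre. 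So the one case left besides the one you handled is: $p_2$ infinitely near $p_1$, not in the fibre direction. You conflate this with a ``map supported on a single fibre'', assert that ``the index cannot be lowered purely within Hirzebruch surfaces'', and propose an unsubstantiated detour through $\p^2_\k$ via a type $\mathrm{III}$ link, which is only available at $\mathbb{F}_1$; your plan to ``bring the general case down to height $1$ by applying the commuting moves to the neighbouring links'' cannot work as stated, since the problematic pair is precisely the one that does not commute.

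The paper resolves the infinitely-near case entirely within the Hirzebruch surfaces: choose an auxiliary $\k$-point $p_3\in\mathbb{F}_m$ off the exceptional section and off the fibre through $p_1$ (such a point exists over any field, including a finite one), set $\varphi_3=\mathrm{elm}_{p_3}\colon\mathbb{F}_m\dasharrow\mathbb{F}_{m-1}$ and $\varphi_4=\mathrm{elm}_{\varphi_3(p_1)}$, and write $\varphi_2\varphi_1=\bigl(\varphi_2\varphi_1(\varphi_4\varphi_3)^{-1}\bigr)\cdot(\varphi_4\varphi_3)$. The second factor is already a valley through $\mathbb{F}_{m-1}$, and the first factor has two base-points which are now both proper points of $\mathbb{F}_m$ on distinct fibres, so the commuting argument you already have applies to it. Without this step (or some substitute for it) your induction does not close, so as written the proof is incomplete.
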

\begin{proof}
As we already explained, we can decompose $\varphi$ into Sarkisov links of type $\mathrm{I},\mathrm{II},\mathrm{III}$ as above. It remains to see that we can avoid $\mathbb{F}_n$ for $n\ge 2$.

To do this, we take two two links of type $\mathrm{II}$ given by
 $$\varphi_1\colon \mathbb{F}_m\dasharrow \mathbb{F}_{m+1}\mbox{ and }\varphi_2\colon \mathbb{F}_{m+1}\dasharrow \mathbb{F}_{m},$$ for some $m\ge 1$ and prove that $\varphi_2\varphi_1$ is either an isomorphism or decomposes into links involving only $\mathbb{F}_i$ for some $i\le m$. If $\varphi_2\varphi_1$ is not an isomorphism, then it has exactly two base-points: the point $p_1$ being the base-points of $\varphi_1$ and the other point $p_2$ corresponding to the base-point of $\varphi_2$, via $\varphi_1$. Note that $p_1$ is a proper point of $\mathbb{F}_m$ and that $p_2$ is either a proper point or infinitely near to $p_1$, and that the two points do not belong to the same fibre (as proper or infinitely near points). The exceptional section $E$ of $\mathbb{F}_m$ is sent by $\varphi_2\varphi_1$ onto itself, so $p_1\in E$ but $p_2$ does not belong to $E$ (as a proper or infinitely near point). If $p_2$ is a proper point of $\mathbb{F}_m$, we factorise $\varphi_1\varphi_2$ into the link associated first to $p_2$ and then to the image of $p_1$, and obtain links $\mathbb{F}_m\dasharrow \mathbb{F}_{m-1}\dasharrow \mathbb{F}_m$. Otherwise, we take a point $p_3\in \mathbb{F}_m$ on a distinct fibre of $p_1$ and not lying on $E$, and denote by $\varphi_3\colon \mathbb{F}_m\dasharrow \mathbb{F}_{m-1}$ the link associated to it, and by $\varphi_4\colon \mathbb{F}_{m-1}\dasharrow \mathbb{F}_{m}$ the link associated to $\varphi_3(p_1)$. Then, $p_1,p_3$ are the two base-points of $\varphi_4\varphi_3$ and $\varphi_2\varphi_1(\varphi_4\varphi_3)^{-1}$ has exactly two base-points, which are now both proper points of $\mathbb{F}_m$. Applying the previous case, we get the result.\end{proof}

\begin{proposition}\label{Prop:G2Krational}
Let $\k$ be some field. The group $G_2(\k)$ is equal to the subgroup of $\Bir(\p^2_\k)$ consisting of elements that contract only rational curves, which is equal to the subgroup of elements of $\Bir(\p^2_\k)$ having all base-points defined over $\k$.
\end{proposition}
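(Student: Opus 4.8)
The plan is to prove the two asserted equalities separately: first that the set $B$ of elements contracting only rational curves coincides with the set $C$ of elements whose $\kk$-base-points are all defined over $\k$, and then that both coincide with $G_2(\k)$. The equality $B=C$ is nothing but the equivalence of conditions $(4)$ and $(1)$ in Lemma~\ref{Lem:BasePts}, so no further work is needed there.

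For the inclusion $G_2(\k)\subseteq C$ I would first check that $C$ is a subgroup of $\Bir(\p^2_\k)$ containing the generators $\sigma_2$ and $\Aut(\p^2_\k)$. The two generators lie in $C$ trivially: an automorphism has no base-point, and the three base-points $[1:0:0]$, $[0:1:0]$, $[0:0:1]$ of $\sigma_2$ are $\k$-rational. Stability under inverse is the equivalence $(1)\Leftrightarrow(2)$ of Lemma~\ref{Lem:BasePts}. For stability under composition I would use characterisation $(3)$: if $\varphi,\psi\in C$ and $D\subset\p^2_{\kk}$ is an irreducible curve contracted by $\varphi\psi$, then either $\psi$ contracts $D$ --- and then $D$ is defined over $\k$ by $(3)$ for $\psi$ --- or $\psi(D)$ is a curve contracted by $\varphi$, which is then defined over $\k$ by $(3)$ for $\varphi$, so that its strict transform $D$ under the $\k$-map $\psi^{-1}$ is again defined over $\k$. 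Hence $\varphi\psi\in C$, and $C$ is a group, giving $G_2(\k)\subseteq C$.

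The substantial inclusion is $C\subseteq G_2(\k)$. Here I would take $\varphi\in C$ and apply Lemma~\ref{Lem:Links} to write it as a composition of elementary links of type $\mathrm{I},\mathrm{II},\mathrm{III}$ involving only $\p^2_\k$, $\mathbb{F}_1$ and $\mathbb{F}_0$, all of whose base-points are $\k$-rational. I would then argue by induction on the number of links. Whenever the chain returns to $\p^2_\k$ strictly before its end, I would cut it there into two self-maps of $\p^2_\k$, each again lying in $C$ and using fewer links, and conclude by induction. This reduces matters to a chain that meets $\p^2_\k$ only at its two endpoints, that is, a map $\p^2_\k\to\mathbb{F}_1\to\cdots\to\mathbb{F}_1\to\p^2_\k$ passing only through $\mathbb{F}_0$ and $\mathbb{F}_1$ in between; such a map is a de Jonqui\`eres transformation with $\k$-rational base-points, and the standard mechanism underlying Noether--Castelnuovo, which is precisely what the passage through $\mathbb{F}_0,\mathbb{F}_1$ encodes, writes it as a product of quadratic transformations, each necessarily with $\k$-rational base-points.

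It remains to show that every quadratic transformation $q$ of $\p^2_\k$ with all base-points defined over $\k$ lies in $G_2(\k)$, and I expect this to be the main obstacle. Up to composing on both sides with elements of $\Aut(\p^2_\k)$ there are only finitely many such $q$, classified by the proximity configuration of the three $\k$-rational base-points: three proper non-collinear points, two proper points with a third infinitely near one of them, or a chain of three infinitely near points. The generic configuration is realised by $\sigma_2$ itself, after moving the base-points onto $[1:0:0]$, $[0:1:0]$, $[0:0:1]$ by an automorphism. The degenerate configurations are the delicate ones; here I would use the quadratic involution $\theta_2$ of Example~\ref{Exam:Gizatullin}, which lies in $G_2(\k)$ and whose base-points consist of two proper $\k$-points and one infinitely near point, together with a short explicit composition of $\theta_2$, $\sigma_2$ and automorphisms to realise the remaining chain configuration. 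Once each quadratic type is placed in $G_2(\k)$, the induction of the previous paragraph closes and yields $\varphi\in G_2(\k)$, completing the proof that $G_2(\k)=B=C$.
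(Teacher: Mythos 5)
Your overall strategy coincides with the paper's: Lemma~\ref{Lem:BasePts} for the equivalence of the two characterisations and for closure under inversion, Lemma~\ref{Lem:Links} for the decomposition into links through $\p^2_\k,\mathbb{F}_0,\mathbb{F}_1$, induction on the number of links, and identification of the resulting quadratic maps with $\sigma_2$ or $\theta_2$ up to automorphisms. The one step that is not yet a proof is the sentence ``the standard mechanism underlying Noether--Castelnuovo \dots writes it as a product of quadratic transformations, each necessarily with $\k$-rational base-points''. That classical mechanism chooses general auxiliary points and is exactly what fails over finite fields --- the reason the paper switches to Sarkisov links in the first place. What actually makes the step work is concrete bookkeeping on the chain of links: after a link $\varphi_1\colon\p^2_\k\dasharrow\mathbb{F}_1$ followed by links $\varphi_2\colon\mathbb{F}_1\dasharrow\mathbb{F}_0$ and $\varphi_3\colon\mathbb{F}_0\dasharrow\mathbb{F}_1$, the map $\psi=(\varphi_1)^{-1}\varphi_3\varphi_2\varphi_1$ is a self-map of $\p^2_\k$ with at most three base-points, hence of degree at most $2$, and replacing $\varphi$ by $\varphi\psi^{-1}$ shortens the chain by two. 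You should carry out this peeling explicitly rather than cite the classical decomposition, since your hedge that the passage through $\mathbb{F}_0,\mathbb{F}_1$ ``encodes'' it is the right idea but is left unexecuted.

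A second payoff of doing the bookkeeping this way is that your third quadratic configuration --- a chain of three infinitely near points --- never occurs: the $\psi$ above always has two proper base-points, namely the centre $p_1$ of $\varphi_1$ and the preimage under $\varphi_1$ of the centre of $\varphi_2$, which lies off the exceptional divisor of $\mathbb{F}_1$. So only the two configurations you handle via $\sigma_2$ and $\theta_2$ arise, and the ``short explicit composition'' you promise for the chain configuration --- together with the transitivity statement on second-order infinitely near data over an arbitrary field that it would implicitly require --- is not needed. With the peeling step made precise, your proof closes exactly as the paper's does.
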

\begin{proof}
By Lemma~\ref{Lem:BasePts}, it suffices to show that every element $\varphi\in\Bir(\p^2_\k)$ which has all base-points defined over $\k$ belongs to $G_2(\k)$. If $\varphi$ has degree $1$, then $\varphi\in \Aut(\p^2_\k)=\PGL(3,\k)\subset G_2(\k)$. Otherwise, we apply Lemma~\ref{Lem:Links} and decompose $\varphi$ into links of type $\mathrm{I}$, $\mathrm{II}$, $\mathrm{III}$,  involving only $\p^2_\k,\mathbb{F}_0,\mathbb{F}_1$. We prove then that $\varphi\in G_2(\k)$, proceeding by induction on the number of links, the case of zero links being the case of automorphisms.

Let $\varphi_1\colon \p^2_\k\dasharrow \mathbb{F}_1$ be some link of type $\mathrm{I}$. If $\varphi_1$ is followed by a link $\varphi_2$ of type $\mathrm{III}$, then $\varphi_2\varphi_1$ is an automorphism of $\p^2_\k$, so we can decrease by two the number of links. If $\varphi_1$ is followed by a link $\varphi_2\colon \mathbb{F}_1\dasharrow \mathbb{F}_0$, then $\varphi_3$ is a link $\mathbb{F}_0\dasharrow \mathbb{F}_1$ (we did not use links of type $\mathrm{IV}$, see Remark~\ref{RemarkLinkIV}). It remains to show that the birational map $\psi=(\varphi_1)^{-1}\varphi_3\varphi_2\varphi_1\colon \p^2_\k\dasharrow \p^2_\k$ belongs to $G_2(\k)$. Replacing $\varphi$ with $\varphi\psi^{-1}$ will then decrease by $2$ the number of links needed.

Note that $\psi$ has at most $3$ base-points, and is thus of degree $1$ or $2$. If $\psi$ has degre~$1$, then $\psi\in G_2(\k)$. Otherwise, the three base-points  of $\psi$ are defined over $\k$, and two of these are proper points of $\p^2_\k$: these are the base-point $p_1$ of $\varphi_1$ and $p_2=(\varphi_1)^{-1}(q)$, where $q$ is the base-point of $\varphi_2$, which does not belong to the exceptional divisor of $\mathbb{F}_1$. Moreover, the three base-points of $\psi$ are not collinear, since the linear system of $\psi$ is irreducible and of degree $2$.

If the third base-point $p_3$ of $\psi$ is also a proper point of $\p^2_\k$, there exists $\beta\in \Aut(\p^2_{\k})=\PGL(3,\k)$ that sends the coordinate points $[1:0:0]$, $[0:1:0]$, $[0:0:1]$ onto  $p_1,p_2,p_3$. Then, $\varphi$ and $\sigma_2\beta$ have the same linear system, which implies that $\varphi=\alpha\sigma_2\beta$ for some $\alpha\in \PGL(3,\k)$. This shows that $\varphi\in G_2(\k)$. 

If $p_3$ is not a proper point of $\p^2_\k$, it has to correspond to a tangent to one of the two other points, say $p_1$, and which is not the direction of the line through $p_1,p_2$. We can then as before replace $\varphi$ with $\varphi\beta$, for some $\beta\in\PGL(3,\k)$ and assume that 
 $p_1=[0:0:1]$, $p_2=[0:1:0]$, and that $p_3$ corresponds to the direction of $x_1=0$. Hence, $\varphi=\alpha\theta_2$, where $\alpha\in \PGL(3,\k)$ and $\theta_2$ is as before equal to
$$\theta_2\colon [x_0:x_1:x_2]\dashmapsto [x_0x_1:(x_0)^2:x_2x_1]$$
and belongs to $G_2(\k)$ (see Example~\ref{Exam:Gizatullin}).\end{proof}

\section{Punctual maps}\label{Sec:Punctual}
There are plenty of definitions of a punctual element of $\Bir(\Pn)$ in the literature, which are often falsely considered as equivalent. The most studied case being when $n=3$ and $\k=\mathbb{C}$, let us restrict to it for the moment, and give the most used definitions.

\begin{enumerate}
\item[(0)]
The map $\theta_n$ from Example~\ref{Exam:Gizatullin} can be seen, in affine coordinates, as an affine blow-up $z_1'=\frac{z_1}{z_n}$, $i=1,\dots,n-1$, $z_n'=z_n$. It was studied, at least for $n=3$, by G. Kobb in \cite[page 406]{Kobb}, see also \cite[Pages 2056-2057]{Enc}.
\item
In \cite{Kantor}, S.~Kantor studies birational maps  of $\p^3_\mathbb{C}$ with no curve of the first species, that is transformations without curves whose proper image in the projective space is a surface; this is also the definition of punctual maps given in \cite[Page 116]{Gizatullin}. S. Kantor gives a proof that contains gaps (H. Hudson says that "the proof is admittedly "gewagt"" in \cite[page 318, \S29]{Hudson} using the word "gewagt" that appears in Kantor's work in the footnotes of pages 18 and 20) which says that each such map is generated by $\Aut(\p^3)$ and $\sigma_3$. He also claims that the set of such maps forms a group.
\item
In \cite[page 359, \S4]{CobleII}, A.~Coble defines a \emph{regular} map of $\p^3$ to be a birational map which is a generated by $\Aut(\p^3_\mathbb{C})$ and $\sigma_3$; this terminology is confusing, since regular usually means "defined at every point". A. Coble explains "They are determined essentially by their fundamental points alone and in all important particulars are entirely analogous to the ternary Cremona transformation."  
\item
In \cite[Page 318, \S29]{Hudson},  \cite{DuVal60} and \cite{DuVal81}, H.~Hudson  and P. Du Val define the group of punctual birational map of $\Bir(\p^3_\C)$ as the group generated by $\sigma_3$ and $\Aut(\p^3_\C)$. They claim that the linear system of a map in this group is only defined by points. 
\item
In \cite[page 93]{DolOrt}, I. Dolgachev and D. Ortland define a punctual Cremona transformation to be a map of the form $\pi\tau\eta^{-1}$, where $\eta\colon X\to \p^3_\C$ and $\pi\colon Y\to \p^3_\C$ are blow-ups of points of $\p^3_\C$ and $\tau\colon X\dasharrow Y$ is a pseudo-isomorphism, that is a birational map which does not contract any hypersurface. They also claim that the set of such maps is a group, and compare it to the group generated by $\sigma_n$ and linear automorphisms.  
\end{enumerate}
Recall that an irreducible curve $\Gamma\subset \p^3$ is a fundamental curve of the first kind for $\varphi\in \Bir(\p^3)$ if there is an irreducible surface $S\subset\p^3$ contracted by $\varphi^{-1}$ onto $\Gamma$ (which means that a general point of $S$ is sent to $\Gamma$ by $\varphi^{-1}$ and that a general point of $\Gamma$ is obtained by this way). 

In \cite[Page 116]{Gizatullin}, M. Gizatullin give a simple example which shows that the set of birational maps of $\p^3_\C$ having no fundamental curve of the first kind is not a group, contrary to what S. Kantor claimed to have proven. The map computed in \cite[Page 115]{Gizatullin} is the birational involution
$$\theta\colon [x_0:x_1:x_2:x_3]\dasharrow [(x_1)^2: x_0x_1: x_0x_2: x_0x_3]$$
that we already introduced in Example~\ref{Exam:Gizatullin}.
The map $\theta^{-1}=\theta$ contracts the plane $H_1$ given by $x_1=0$ onto the line $l$ given by $x_0=x_1=0$, or more precisely send $H_1$ on the divisor of a line $\hat{l}$ infinitely near to $l$. Hence, the line $l$ is a fundamental curve of the first kind of $\theta$. The map $\theta$ is then not punctual in all natural senses, except that it belongs to the group generated by $\Aut(\p^3_\C)$ and $\sigma_3$.

This example also shows that the terminology "punctual" does not seem to be appropriate for the group $G_3(\C)$ (and even more for $G_n(\C)$ for $n\ge 4$). Note that H. Hudson was aware of some problems:  in \cite[Page 318, \S39]{Hudson}, after having said that the set of punctual transformations is a group which consists of compositions of maps of the form $\alpha\sigma_3\beta$, $\alpha,\beta\in \Aut(\p^3_\C)$, she describes the linear system of such an element, and assume for this that the base-points of the new map are either points where a surface is contracted by the previous map, or "in general position". The set of such maps obtained could deserve the word "punctual", and are in fact some of the maps described in \cite{DolOrt} (maybe all), but would not be a group.

\bigskip

One consequence of the work made in this text is the following result, which tends to show that the "gewagt" Theorem of S. Kantor is false.
\begin{proposition}
Let $\k$ be any field. There are elements of $\Bir(\p^3_\k)$ which have no fundamental curve of the first species, but which do not belong to $G_3(\k)$.
\end{proposition}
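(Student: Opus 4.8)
The plan is to exhibit one explicit monomial transformation that works over every field and to separate the two requirements. I would take
$$\varphi\colon [x_0:x_1:x_2:x_3]\dashmapsto [x_0^3:x_0^2x_1:x_0x_1x_2:x_1^2x_3],$$
which in the chart $x_0=1$ reads $(u_1,u_2,u_3)\mapsto(u_1,u_1u_2,u_1^2u_3)$ and so corresponds to the matrix $M=\left(\begin{smallmatrix}1&0&0\\1&1&0\\2&0&1\end{smallmatrix}\right)\in\GL(3,\Z)$; its inverse corresponds to $M^{-1}=\left(\begin{smallmatrix}1&0&0\\-1&1&0\\-2&0&1\end{smallmatrix}\right)$, namely $\varphi^{-1}=[x_0x_1^2:x_1^3:x_0x_1x_2:x_0^2x_3]$. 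The two things to prove are that $\varphi\notin G_3(\k)$ and that $\varphi$ has no fundamental curve of the first species.

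For the first, I would note that the first column of $M$ has an even number of odd entries, so $M\notin\GL(3,\Z)_{\mathrm{odd}}$ and Theorem~\ref{Thm:Monomial} already gives $\varphi\notin G_3(\k)$ when $\car(\k)\neq 2$. To treat all fields at once I would instead use Proposition~\ref{Prop:Discrepancy}: the discrepancy of a hypersurface with respect to a birational monomial map is a combinatorial invariant of $M$, independent of the characteristic, and in characteristic $0$ equals the exponent of the hypersurface in the Jacobian. Since $\Jac(\varphi)=3\,x_0^5x_1^3$, the plane $\{x_1=0\}$ has discrepancy $3$; as this is odd and $n=3$, Proposition~\ref{Prop:Discrepancy} yields $\varphi\notin G_3(\k)$ for every $\k$.

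For the second, I would show that $\varphi^{-1}$ contracts no surface onto a curve. Because $\varphi^{-1}$ is monomial it restricts to an isomorphism of the torus $(\k^*)^3$, so an irreducible hypersurface other than a coordinate hyperplane meets the torus in a divisor and is carried to a hypersurface; hence only coordinate hyperplanes can be contracted. Checking them one by one, on $\{x_0=0\}$ only $x_1^3$ survives and on $\{x_1=0\}$ only $x_0^2x_3$ survives, so these planes go to the points $[0:1:0:0]$ and $[0:0:0:1]$, while on $\{x_2=0\}$ and $\{x_3=0\}$ three monomials survive and define dominant maps, so those planes are not contracted. Thus every surface contracted by $\varphi^{-1}$ maps to a point, and $\varphi$ has no fundamental curve of the first species (the same computation is valid over $\kk$).

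The step I expect to be the real obstacle is conceptual rather than computational: choosing the right mechanism to leave $G_3(\k)$. The naive idea, making $\varphi$ send a hyperplane onto the exceptional divisor of a line (discrepancy $1$), fails, because such a $\varphi$ forces $\varphi^{-1}$ to contract a surface onto a curve and hence creates a fundamental curve of the first species. The key idea is to exit $G_3(\k)$ through an odd discrepancy carried by a hyperplane contracted onto a \emph{point} (here $3$), which is independent of the contraction behaviour of $\varphi^{-1}$; it then only remains to arrange, as the monomial $\varphi$ above does, that $\varphi^{-1}$ contracts its hyperplanes onto points only.
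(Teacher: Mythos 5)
Your proof is correct, but it takes a genuinely different route from the paper's. The paper uses the monomial involution $\psi=[x_1:x_0:x_2\frac{(x_1)^2}{(x_0)^2}:x_3\frac{x_1}{x_0}]$ and excludes it from $G_3(\k)$ indirectly: it composes $\psi$ with the map $\theta\in G_3(\k)$ of Example~\ref{Exam:Gizatullin} to land on (a conjugate of) $\xi_3$, which is already known not to lie in $G_3(\k)$ by Corollary~\ref{Coro:XinNotInGnIfnodd}; the ``no fundamental curve'' part is then the same torus argument you give. You instead exhibit a non-involutive monomial map and exclude it directly by producing an odd discrepancy over a \emph{point}, invoking Proposition~\ref{Prop:Discrepancy}. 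Both proofs ultimately rest on that proposition, but yours makes the underlying mechanism more transparent (and your closing remark correctly identifies why the discrepancy must be carried by a hyperplane contracted to a point rather than to a line). The one step you assert without justification is that the discrepancy of $\{x_1=0\}$ equals $3$ in \emph{every} characteristic; the Jacobian remark in the paper only covers characteristic $0$, and Theorem~\ref{Thm:Monomial}(3) only covers $\car(\k)\neq 2$, so this is the point that carries the whole burden for $\car(\k)=2$. It is true and easy to make precise: the divisorial valuation of $\{u_1=0\}$ pulls back the target coordinates $(v_1,v_2,v_3)=(u_1,u_1u_2,u_1^2u_3)$ with weights $(1,1,2)$ (the first column of $M$), so on a smooth toric model of $\p^3_\k$ containing the ray $(1,1,2)$ — which exists over any field — the hyperplane maps birationally onto the toric divisor $E_{(1,1,2)}$, whose discrepancy is $1+1+2-1=3$ by the standard combinatorial formula. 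I would add a sentence to this effect; with that, the argument is complete for all fields.
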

\begin{remark}
The example given here is not really "punctual", since one of the hypersurfaces is contracted onto a line in the exceptional divisor of a point. So the "gewagt" Theorem of Kantor could maybe be true if we avoid such kind of examples (see below).
\end{remark}

\begin{proof}The birational monomial involution
$$\psi \colon [x_0:\dots:x_3]\dashmapsto [x_1:x_0 :x_2\frac{(x_1)^2}{(x_0)^2}:x_3\frac{x_1}{x_0}]$$
does not belong to $G_3(\k)$. Indeed, the map
$$\theta \colon [x_0:x_1:x_2:x_3]\dashmapsto [x_0:x_1 :\frac{(x_3)^2}{x_2}:x_3]$$
belongs to $G_3(\k)$ (Example~\ref{Exam:Gizatullin}), but 
$$\theta \psi\theta\colon [x_0:x_1:x_2:x_3]\dashmapsto [x_1:x_0 :x_2:x_3\frac{x_1}{x_0}]$$
does not belong to $G_3(\k)$  (Corollary~\ref{Coro:XinNotInGnIfnodd}).
The map $\psi$ contracts exactly two hypersurfaces, namely $H_0,H_1\subset \p^3_\k$, given respectively by $x_0=0$ and $x_1=0$, respectively on the points $[0:0:1:0]$ and $[1:0:0:0]$. Hence, $\psi=\psi^{-1}$ does not have any fundamental curve of the first species.
\end{proof}
In fact, part (1) of Theorem~\ref{Theorem:Contraction} shows that even if hypersurfaces are contracted by elements of $G_3(\k)$ onto points, we cannot send a hypersurface onto the exceptional divisor of a curve by some element of $G_3(\k)$, so this group is not "so" far from being punctual. However, the elements of $G_n(\k)$ are very far from being punctual in dimension $n\ge 4$, as one can contract hypersurfaces on the exceptional divisors of planes (part (2) of Theorem~\ref{Theorem:Contraction}).\\

In fact, it seems to us that a "punctual" map should be an element of $\varphi\in \Bir(\Pn)$ admitting a birational morphism $\pi\colon X\to \Pn$, that consists of sequence of blow-ups of points, and such that $(\varphi\pi)^{-1}\colon \Pn\dasharrow X$ should not contract any hypersurface. The strong sense would be to ask that all points lie in $\Pn$, and a weaker sense would allow infinitely near points.

The definition of \cite{DolOrt} goes in this direction, but is in fact even stronger, as it basically ask that the map and also its inverse are punctual in the strong sense (we could say bipunctual). As explained before, the set of maps satisfying any of these definitions is not a group, contrary to what is claimed in many of the texts cited above. The following example also shows that all these definitions are different.
\begin{example}
Let $\k$ be a field with $\car(\k)\not=2$ and let $l\in \Aut(\p^3_\k)$, be the automorphism given by 
$$\begin{array}{lllll}
l&\colon& [x_0:\dots:x_3]&\mapsto& [x_0:x_2+x_3:x_1+x_3:x_1+x_2]\\
l^{-1}&\colon& [x_0:\dots:x_3]&\mapsto& [2x_0:-x_1+x_2+x_3:x_1-x_2+x_3:x_1+x_2-x_3]\end{array}
$$
and observe that $l$ and $l^{-1}$ act in the following way on the coordinate points of $\p^3_\k$:
$$\begin{array}{lll}
l([1:0:0:0])&=&([1:0:0:0])\\
l([0:1:0:0])&=&([0:0:1:1])\\
l([0:0:1:0])&=&([0:1:0:1])\\
l([0:0:0:1])&=&([0:1:1:0])\\
\end{array}\ \ \begin{array}{lll}
l^{-1}([1:0:0:0])&=&([1:0:0:0])\\
l^{-1}([0:1:0:0])&=&([0:-1:1:1])\\
l^{-1}([0:0:1:0])&=&([0:1:-1:1])\\
l^{-1}([0:0:0:1])&=&([0:1:1:-1]).\\
\end{array}$$
Hence, $\alpha=\sigma_3 l \sigma_3$ is punctual in the weak sense defined above, but not in the strong sense, since $l^{-1}$ sends coordinates points onto general points of the hyperplane $x_0=0$. However, $\alpha^{-1}$ is not even punctual in the weak sense, as $l$ sends coordinate points onto points on general points of coordinate lines.

Similarly, taking 
$$\begin{array}{lllll}
l&\colon& [x_0:\dots:x_3]&\mapsto& [x_0-x_2:x_1-x_2:-x_2+x_3:2 x_2-x_3]\\
l^{-1}&\colon& [x_0:\dots:x_3]&\mapsto&[x_0+x_2+x_3:x_1+x_2+x_3:x_2+x_3:2x_2+x_3] \end{array}
$$
the map $\alpha=\sigma_3 l \sigma_3$ is punctual in the strong sense defined above, but $\alpha^{-1}$ is not punctual in the weak one.

\end{example}
The question of showing that every bipunctual map is in fact an element of $G_n(\k)$, asked in \cite{DolOrt} and corresponding to the "gewagt" theorem of Kantor is still open.

\end{document}